\documentclass[11pt,reqno]{amsproc}

\usepackage[utf8]{inputenc}
\usepackage[margin=1in]{geometry}
\usepackage{amsmath}
\usepackage{amssymb}
\usepackage{amsfonts}
\usepackage{amsthm}
\usepackage{mathrsfs}
\usepackage{mathtools}
\usepackage{enumitem}
\usepackage[usenames,dvipsnames]{xcolor}
\usepackage{hyperref}
\hypersetup{colorlinks=true, citecolor=magenta, linkcolor=blue, filecolor=teal, urlcolor=cyan, pdfpagemode=FullScreen}
\usepackage{graphicx}

\numberwithin{equation}{section}

 \newtheorem{theorem}{Theorem}[section]
 \newtheorem{proposition}[theorem]{Proposition}
 \newtheorem{lemma}[theorem]{Lemma}
 \newtheorem{conjecture}[theorem]{Conjecture}
 \newtheorem{corollary}[theorem]{Corollary}
 \theoremstyle{definition}

 \theoremstyle{remark}
 \newtheorem{remark}[theorem]{Remark}
\def\p{\partial}
\newcommand{\les}{\lesssim}

\newcommand{\mean}[1]{\langle#1 \rangle}

\newcommand*{\supp}{\ensuremath{\mathrm{supp\,}}}
\newcommand*{\tr}{\ensuremath{\mathrm{tr\,}}}

\newcommand*{\RR}{\ensuremath{\mathbb{R}}}

\newcommand{\eps}{\varepsilon}

\renewcommand*{\tilde}{\widetilde}

\usepackage{etoolbox}
\makeatletter
\patchcmd{\@maketitle}
  {\global\topskip8pc\relax}
  {\global\topskip5pc\relax}   % reduce to taste
  {}{\ClassWarning{amsproc}{Could not patch topskip in \string\@maketitle}}
\makeatother
\begin{document}

\title[RSS for 3D Navier-Stokes]{On rotated backwards self-similar solutions of the incompressible 3D Navier-Stokes equations}

\author{Ben Pineau} 
\address{Courant Institute School of Mathematics, Computing, and Data Science, New York University, New York, NY, 10012}
\email{\href{https://bpineau2.github.io/}{brp305@nyu.edu}}

\author{Vlad Vicol}
\address{Courant Institute School of Mathematics, Computing, and Data Science, New York University, New York, NY, 10012}
\email{\href{https://cims.nyu.edu/~vicol/}{vicol@cims.nyu.edu}}

\begin{abstract}
We consider backwards globally self-similar solutions of the 3D incompressible Navier-Stokes equations 
which are invariant under the \emph{joint action of scaling} (the natural parabolic scaling) \emph{and rotation}  about a given axis, at a constant angular speed $\alpha$ in self-similar time. 

For these so-called \emph{rotated self-similar} solutions (RSS), we prove that if they satisfy a Type~I upper bound, and if the rotation parameter  \emph{$\alpha$ is either too small, or too large}, then they must be trivial. This Liouville-type result extends the classical works of Ne\v{c}as-R\r{u}\v{z}i\v{c}ka-\v{S}ver\'ak ('96) and Tsai ('98), which consider $\alpha=0$, to the case of similarity profiles which experience nontrivial rotation. Our results partially answer a question posed by Perelman.

For backwards globally self-similar solutions which are invariant under the discrete action of scaling and rotation, the so-called \emph{rotated discretely self-similar} solutions (RDSS), we obtain similar Liouville-type results under a Type~I upper bound, assuming extreme values of the rotation parameter $\alpha$, and assuming that the scaling factor $\lambda$ is sufficiently close to $1$. 

We also establish a new regularity criterion for 3D Navier-Stokes which is \emph{local} in nature: if the solution satisfies a Type~I upper bound in a unit parabolic cylinder, and there is a single time-slice at which the solution is locally \emph{approximately} self-similar, then the top-center of the parabolic cylinder is a regular point of the Navier-Stokes flow.

The proof of all these results rests on the introduction of a robust weighted-$L^2$ framework. In particular, our method is quantitative and is not sensitive to whether the Bernoulli head pressure satisfies a maximum principle, which was a key obstruction in previous works. 
\end{abstract}

\maketitle
%\tableofcontents
 
\section{Introduction}

We consider the Cauchy problem for the 3D incompressible Navier-Stokes equations
\begin{subequations}\label{eq:NSE}
\begin{align}
\p_t u - \Delta u + (u\cdot\nabla)u + \nabla p 
&= 0,
\\
\nabla\cdot u 
&= 0,
\end{align}
\end{subequations}
for the unknown velocity field $u=u(x,t)$ and scalar pressure $p=p(x,t)$, with $x\in \RR^3$ and $t\geq -1$. We supplement~\eqref{eq:NSE} with a \textit{smooth}, \textit{divergence-free}, and sufficiently fast \textit{decaying} initial datum $u(\cdot,-1)=u_0$. Every such datum $u_0$ produces a unique, smooth, decaying solution $(u,p)$ of~\eqref{eq:NSE}, on a maximal time interval $[-1,T_*)$, for some $T_* = T_*(u_0) \in (-1,\infty]$, see~\cite{CF88,Lemarie02,Tsai18,BV22}. Whether $T_*$ can be finite is a central open problem in the mathematical theory of viscous incompressible fluids~\cite{Leray34,Fefferman06}. 

Suppose $T_*$ is finite, and without loss of generality, let $T_*=0$. The classical candidate for a finite-time singularity is a \textit{self-similar} solution: zooming-in and rescaling the solution at just the right rate in time, one sees the same object as $t \to 0^-$, a \textit{similarity profile}. 
For many nonlinear equations of mathematical physics this picture is not merely a heuristic, but a theorem.\footnote{Self-similar singularity formation has been rigorously established, for instance in: the semilinear heat equation~\cite{GigaKohn85,HerreroVelazquez93,BricmontKupiainen94,MerleZaag97}, the incompressible Euler equations~\cite{Elgindi21,ChenHou25}, shocks for compressible Euler~\cite{BuckmasterShkollerVicol23}, implosions for the compressible Euler and Navier-Stokes ~\cite{MRRS22a,MRRS22b,BuckmasterCaoLaboraGomezSerrano25,ChenShkollerVicol26}, Prandtl boundary-layer separation~\cite{DalibardMasmoudi19,CollotGhoulMasmoudi21}, Euler--Poisson gravitational collapse~\cite{GuoHadzicJang21},  chemotactic aggregation in reaction--diffusion systems~\cite{HerreroVelazquez96,HerreroMedinaVelazquez98}, the mass-critical nonlinear Schr\"odinger equation~\cite{MerleRaphael04}, the complex Ginzburg--Landau equation~\cite{PlechacSverak01,MasmoudiZaag08}, the harmonic map heat flow and wave maps~\cite{RaphaelSchweyer13,RaphaelRodnianski12}, and the gravitational collapse of a self-gravitating scalar field in general relativity~\cite{Christodoulou94}. This is a highly non-exhaustive list, which for instance does not cover the many interesting results on self-similar blowup in 1D toy-models arising in fluid dynamics.}

Short of proving unconditional global-in-time regularity for~\eqref{eq:NSE}, it is thus a natural question to ask whether smooth self-similar solutions of the 3D incompressible Navier-Stokes equations exist. 

\subsection{Backwards self-similar solutions}
\label{sec:SS}
In~\cite{Leray34} Leray proposed to seek solutions of~\eqref{eq:NSE} that are invariant under the natural Navier-Stokes scaling:  
\[
u(x,t) = \lambda u(\lambda x,\lambda^2 t), 
\qquad \mbox{for all } \; 
\lambda>0.
\] 
For a solution defined on $[-1,0)$ and which blows up as $t \to 0^-$ at $x=0$, one may take $\lambda = \frac{1}{\sqrt{-t}}$, thus arriving at a \textit{backwards}\footnote{The adjective \emph{backwards} is used to distinguish these solutions from \emph{forward} self-similar solutions, which satisfy the ansatz $u(x,t)=\frac{1}{\sqrt{t}}U (\frac{x}{\sqrt{t}} )$, for $t>0$, and which arise from scale-invariant initial data. Forward self-similar and forward discretely self-similar solutions play an important role in the Cauchy theory of the Navier-Stokes equations for scale-invariant data, and in questions related to non-uniqueness; see, for instance, \cite{JiaSverak14,Tsai14,BradshawTsai17_2,KorobkovTsai16,JiaSverakTsai18,JiaSverak15, GuillodSverak23, AlbrittonBrueColombo22,GLX26,AGKR26} and references therein. Since our goal is to rule out possible finite-time blowup profiles for smooth Navier-Stokes solutions, all self-similar, rotated self-similar, and discretely self-similar solutions considered in this paper are backwards in time. See also~\cite{GuillodWittwer15} for related stationary solutions.} \textit{globally}\footnote{We call the solution \textit{globally} self-similar to emphasize that the profile $U$ is defined on all of $\RR^3$ and does not depend on time.} \textit{self-similar solution} (SS):
\begin{equation}\label{eq:SS}
u(x,t)=\frac{1}{\sqrt{-t}}\,U \Big(\frac{x}{\sqrt{-t}}\Big),
\qquad x\in \RR^3, \quad t<0.
\end{equation}
Here  $U\colon\RR^3\to\RR^3$ is a time-independent \textit{similarity profile}.  Inserting ansatz~\eqref{eq:SS} into the Navier-Stokes system~\eqref{eq:NSE} shows that the profile $U = U(y)$ solves the stationary Leray system
\begin{subequations}\label{eq:Leray}
\begin{align}
\tfrac12 U + \tfrac12 (y\cdot\nabla)U   -\Delta U +  (U\cdot\nabla)U + \nabla P &= 0,\\
\nabla\cdot U &= 0,
\end{align}
\end{subequations}
on $\RR^3$. Here we have denoted the self-similar space variable as $y= \frac{x}{\sqrt{-t}}$.

Leray's question, whether~\eqref{eq:Leray} admits a nontrivial finite-energy solution $U$, was answered in the negative by Ne\v{c}as, R\r{u}\v{z}i\v{c}ka, and \v{S}ver\'ak~\cite{NRS96}: if $U\in L^3(\RR^3)$, then $U\equiv 0$.\footnote{Results which show that globally defined solutions of an elliptic system must vanish under a mild integrability or growth constraint are sometimes called \textit{Liouville-type} theorems.} The proof rests on a maximum principle for the head-pressure, or Bernoulli function, defined by
\begin{equation}\label{eq:Bernoulli}
\Pi := P +  \tfrac12 |U|^2 + \tfrac12\, (y\cdot U).
\end{equation}
For a solution of~\eqref{eq:Leray}, $\Pi$ satisfies the elliptic equation
\begin{equation}\label{eq:Bernoulli:ineq}
-\Delta \Pi + \big(U+\tfrac12 y\big)\cdot\nabla\Pi = -|\Omega|^2 \le 0, \qquad \Omega = \nabla \times U,
\end{equation}
and hence obeys a maximum principle. The triviality of $U$ then follows by comparison against exponentially growing radial supersolutions of~\eqref{eq:Bernoulli:ineq}.

Tsai~\cite{Tsai98} established the same conclusion for profiles $U\in L^p(\RR^3)$ with $p\in(3,\infty]$, as well as for weak solutions satisfying only local energy estimates. Further Liouville-type theorems for self-similar solutions were obtained by Chae and Wolf~\cite{ChaeWolf17b}; for instance, they prove that a globally self-similar solution with profile $U \in L^{p,\infty}(\RR^3)$ with $p \in (\frac 32,\infty)$, must vanish identically. 

\subsection{Rotated self-similar solutions (RSS)}
\label{sec:RSS}
In addition to the scaling invariance $u(x,t)\mapsto \lambda u(\lambda x,\lambda^2 t)$ for $\lambda>0$, the Navier-Stokes equations are also invariant under rotations $u(x,t) \mapsto \mathcal{O} u(\mathcal{O}^\intercal x,t)$, for $\mathcal{O} \in SO(3)$. Thus, a larger class of self-similar solutions may be obtained by composing scaling and rotation. Denote by $R(s)\in SO(3)$ the matrix representing rotation by angle $s$ about the $\vec{e}_3$-axis,\footnote{The choice of the $\vec{e}_3$-axis as the axis of rotation is made here only for convenience.} and let $J$ be the associated antisymmetric generator:
\begin{equation}\label{eq:RJ}
R(s)=\begin{pmatrix} \cos s & -\sin s & 0\\ \sin s & \cos s & 0\\ 0 & 0 & 1\end{pmatrix},
\qquad
J=R'(0)=\begin{pmatrix} 0 & -1 & 0\\ 1 & 0 & 0\\ 0 & 0 & 0\end{pmatrix}.
\end{equation}
Letting $\mathcal{O} = R(2\alpha \log \lambda)$ for some fixed $\alpha \in \RR$ leads to the class of \textit{rotated self-similar} solutions (RSS), as being those for which 
\[
u(x,t) = \lambda R(2\alpha \log \lambda) u \bigl( \lambda R(-2\alpha \log \lambda) x,\lambda^2 t \bigr),
\qquad \mbox{for all } \; \lambda >0.
\] 
For a solution of~\eqref{eq:NSE} defined on $[-1,0)$ and which blows up as $t \to 0^-$ at $x=0$, one may take $\lambda = \frac{1}{\sqrt{-t}}$ so that  $\mathcal{O} = R(2\alpha \log \lambda) = R(\alpha s)$, where $s=- \log(-t)$; this corresponds to a similarity profile that is spun at a constant angular speed $\alpha\in \RR$. The \textit{backwards rotated globally self-similar}  ansatz thus reads
\begin{equation}\label{eq:RSS}
u(x,t)=\frac{1}{\sqrt{-t}}\,R(\alpha s)\,
U\!\left(\frac{R(-\alpha s) \, x}{\sqrt{-t}}\right),
\qquad
s=-\log(-t),
\end{equation}
for some $\alpha \in \RR$ and a time-independent similarity profile $U\colon\RR^3\to\RR^3$. When $\alpha=0$, \eqref{eq:RSS} reduces to the self-similar ansatz~\eqref{eq:SS} because $R(0) = {\rm Id}$. 

Substituting~\eqref{eq:RSS} into~\eqref{eq:NSE}, the profile $U\colon\RR^3\to\RR^3$ is found to solve
\begin{subequations}
\label{eq:profile}
\begin{align}
\alpha\big(JU - (Jy\cdot\nabla)U\big) + \tfrac12 U + \tfrac12 (y\cdot\nabla)U - \Delta U +  (U\cdot\nabla)U + \nabla P &= 0,
\label{eq:profile:a}\\
\nabla\cdot U &= 0,
\label{eq:profile:b}
\end{align}
\end{subequations}
on $\RR^3$. Here,  the self-similar space variable is $y= R(-\alpha s) \frac{x}{\sqrt{-t}}$, and $J$ is defined in~\eqref{eq:RJ}.
According to Tsai's book~\cite[Chapter 8]{Tsai18}, the backward rotated self-similar ansatz~\eqref{eq:RSS} was proposed by G.~Perelman in a private communication to G.~Seregin; see also~\cite{BradshawTsai17,JiaSverakTsai18}.

In contrast to the non-rotated case ($\alpha=0$), it is currently an \emph{open problem} to rule out nontrivial rotated backward self-similar singularities of the 3D Navier-Stokes equations. The conjecture (attributed to G.~Perelman) stated in~\cite[Conjecture 8.9]{Tsai18}, or equivalently in~\cite[Open Problem 5.2]{BradshawTsai17}, is the following:
\begin{conjecture}\label{conj:Perelman}
Let $\alpha \neq 0$, and let $U$ be a smooth solution of~\eqref{eq:profile} on $\RR^3$. If there exists a constant $C_{U,0}>0$ such that 
\begin{equation}\label{eq:decay}
|U(y)|\le \frac{C_{U,0}}{1+|y|},
\end{equation}
for all $y\in\RR^3$, then $U\equiv 0$.
\end{conjecture}

\begin{remark}[Type~I blowup rate]
\label{rem:type:I}
Using the ansatz~\eqref{eq:RSS}, it is immediate to show the global bound~\eqref{eq:decay} is equivalent (within this class of rotated backwards self-similar solutions) to the borderline Type~I blowup rate\footnote{The Type~I blowup rate~\eqref{eq:type:I} is precisely the rate at which the
dissipation balances the material derivative, and it is the scaling-critical threshold underlying partial regularity theory of~\cite{CKN82}. In the axisymmetric setting, Type~I blowup for 3D Navier-Stokes has been ruled out~\cite{ChenStrainTsaiYau08,SereginSverak09}; consequently, any putative singularity must be of Type~II. For recent results on potential Type~II blowups, we refer to~\cite{Seregin23,Seregin24,Seregin26} and references therein. Note that in a Type~II blowup scenario  for 3D Navier-Stokes, the self-similar scaling is expected to be dictated by the 3D Euler equations; self-similar solutions of 3D Euler can however be shown to not be viable for a ``lift'' to 3D Navier-Stokes, under a local outgoing property~\cite{CIV26}.}
\begin{equation}
 |u(x,t)| \le \frac{C_{U,0}}{|x|+\sqrt{-t}}
\label{eq:type:I}
\end{equation}
for all $(x,t) \in \RR^3 \times [-1,0)$. The constants in~\eqref{eq:decay} and~\eqref{eq:type:I} are the same. Thus, an equivalent way to state Conjecture~\ref{conj:Perelman} is: \emph{Assume that $u$ is a smooth solution of the 3D Navier-Stokes equations~\eqref{eq:NSE} on $\RR^3 \times [-1,0)$ which satisfies the bound~\eqref{eq:type:I}. Furthermore, assume that there exists $\alpha>0$ such that $u$ satisfies the (global) rotated backwards self-similar ansatz~\eqref{eq:RSS}. Then, $u\equiv 0$.}
\end{remark}

\begin{remark}[$C_{U,0}$ is independent of $\alpha$]
\label{rem:C:indep:alpha}
The equivalent formulation Conjecture~\ref{conj:Perelman} that we have given in Remark~\ref{rem:type:I} stresses an important point. The constant $C_{U,0}$ appearing in \eqref{eq:type:I} is just an a-priori Type~I upper bound for the magnitude of the Navier-Stokes solution; this constant does not know anything about rotations being present or not. The ansatz~\eqref{eq:RSS} is separate, and this is where the rotation parameter $\alpha$ comes in. Moreover, at the initial time $t=-1$ we have $s= - \log(1) = 0$, and $R(0) = {\rm Id}$; the initial data $u_0$ does not know about the rotation either. As such, \emph{throughout this paper we assume that the constant $C_{U,0}$ in~\eqref{eq:decay} (or~\eqref{eq:type:I}) depends on $u_0$, but not on $\alpha$.}
\end{remark}

Why is Conjecture~\ref{conj:Perelman} open for $\alpha \neq 0$? Assumption~\eqref{eq:decay} only implies that the profile $U$ belongs to the weak-$L^3$ class $L^{3,\infty}(\RR^3)$. Had we assumed that the profile decays a little bit faster, to ensure that $U \in L^3(\RR^3)$, then we would have $u\in L^\infty([-1,0);L^3(\RR^3))$, and by the theory of Escauriaza, Seregin, and \v{S}ver\'ak~\cite{ESS} this would imply regularity (and hence the triviality of $U$). This argument applies to rotated self-similar solutions since~\eqref{eq:RSS} implies that $\|u(\cdot,t)\|_{L^3}=\|U\|_{L^3}$ for all $t<0$ and $\alpha\in\RR$. The genuine difficulty therefore lies in the borderline decay rate~\eqref{eq:decay}, which guarantees $U\in L^p(\RR^3)$ for every $p>3$, but not membership in the scaling-critical space $L^3$. When $\alpha=0$, the Liouville theorem established in~\cite{Tsai98} fundamentally relies on the maximum principle for the head pressure $\Pi$~\eqref{eq:Bernoulli} recalled above. When $\alpha\neq 0$, the issue is that the antisymmetric rotation terms $\alpha\big(JU-(Jy\cdot\nabla)U\big)$ present in~\eqref{eq:profile} destroy this structure. We are not aware (see also~\cite{BradshawTsai17,Tsai18}) of any scalar quantity (an $\alpha$-dependent modification of~\eqref{eq:Bernoulli}) playing the role of the Bernoulli function, and which satisfies a maximum principle. This is the central obstruction, and the reason that no Liouville theorem has so far been available for rotated self-similar solutions, when $\alpha\neq0$.

The purpose of this paper is to prove that if $U$ is a RSS solution of 3D Navier-Stokes and $|\alpha|$ is either sufficiently small, or sufficiently large when compared to $C_{U,0}$, then $U\equiv 0$. 

\begin{theorem}[Main result for RSS solutions]
\label{thm:main}
Let $u$ be a solution of the 3D incompressible Navier-Stokes equations on $\RR^3 \times [-1,0)$, which satisfies the Type~I upper bound~\eqref{eq:type:I} for some $C_{U,0}>0$. Assume that there exists $U\in C^2(\RR^3)$ and $\alpha\in \RR$ such that $u$ is a backwards rotated globally self-similar solution of~\eqref{eq:NSE} with profile $U$ (the RSS ansatz~\eqref{eq:RSS} holds). There exists $0 < \underline{\alpha} = \underline{\alpha} (C_{U,0}) \ll 1$ and $1 \ll \overline{\alpha} = \overline{\alpha} (C_{U,0}) <\infty$, such that if \emph{either} $|\alpha| < \underline{\alpha}$ \emph{or} $|\alpha| > \overline{\alpha}$, then $U\equiv 0$.
\end{theorem}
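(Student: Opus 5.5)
The plan is to work entirely with the profile equation~\eqref{eq:profile} in a weighted-$L^2$ framework. The goal is to show that a suitably weighted norm of $U$ (or of $\Omega=\nabla\times U$) is bounded by a small multiple of itself, which forces $U\equiv0$. No maximum principle for $\Pi$ is used.

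\textbf{Step 1: a priori regularity and decay.} From~\eqref{eq:decay}, local parabolic/Stokes regularity for~\eqref{eq:NSE}, and the Type~I bound~\eqref{eq:type:I} on all of $\RR^3\times[-1,0)$, I would derive the following bounds, with constants depending only on $C_{U,0}$ and, crucially, uniform in $\alpha$:
\begin{itemize}[leftmargin=*]
\item $|\nabla^k U(y)|\lesssim_{C_{U,0}} (1+|y|)^{-1-k}$ for $k\le 2$;
\item $|\Omega(y)|\lesssim (1+|y|)^{-2}$, together with a representation of $P$ via Riesz transforms of $U\otimes U$.
\end{itemize}
Uniformity in $\alpha$ comes from working at the level of $u$, where rotation is invisible: $|\nabla^k u|\lesssim (|x|+\sqrt{-t})^{-1-k}$ by scaling. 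At $t=-1$ this transfers directly to $U$.

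\textbf{Step 2: the rotation terms drop out of the energy identity.} The operator $L_\alpha U:=JU-(Jy\cdot\nabla)U$ is the generator of rotations $U\mapsto R U(R^\intercal\cdot)$. It is therefore antisymmetric in $L^2(w\,dy)$ for any radial weight $w=w(|y|)$, because $Jy\cdot\nabla w=0$ and $J$ is antisymmetric. I would take the vorticity (or $U$ itself) and multiply~\eqref{eq:profile} (or its curl) by $w\,\Omega$ with a radial weight, for instance $w=|y|^{\beta}$ or $w=(1+|y|^2)^{\beta/2}$. The term $\alpha\langle L_\alpha\Omega,\Omega\rangle_w$ then vanishes.

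What remains is the $\alpha=0$ Leray structure. The terms $\tfrac12\Omega+\tfrac12 y\cdot\nabla\Omega$ give a coercive contribution $c_\beta\int w|\Omega|^2$ for an appropriate sign of $\beta$; the dissipation gives $\int w|\nabla\Omega|^2$; and the nonlinear terms are bounded using Step 1. This yields an inequality of the form $c\|\Omega\|_w^2\le C(C_{U,0})\,(\text{nonlinear})$. To close it without smallness of $C_{U,0}$, one needs an additional mechanism, and this is where $\alpha$ enters.

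\textbf{Step 3a: small $|\alpha|$, by compactness/contradiction.} Suppose RSS profiles $U_n$ with parameters $\alpha_n\to0$ and the uniform bound~\eqref{eq:decay} are nontrivial. By Step 1 they are precompact in $C^2_{loc}$, so they converge to a solution of the Leray system~\eqref{eq:Leray} satisfying~\eqref{eq:decay}, which vanishes by Tsai~\cite{Tsai98}.

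To rule out loss of mass at infinity, or vanishing that nonetheless leaves $U_n\neq0$, I would use a quantitative version. First normalize, for example by $\|U_n\|_{w}$ in a weighted space where Step 2 gives coercivity. Then use the weighted estimate to show that a nontrivial $U_n$ carries a definite portion of its weighted norm in a fixed ball. Finally, pass to a linearized limit, $U_n/\|U_n\|$ solving a linear Leray-type equation with the coefficient $U_\infty=0$, which has no nontrivial weighted solutions.

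\textbf{Step 3b: large $|\alpha|$, by averaging.} Decompose $U$ in angular Fourier modes $e^{ik\theta}$ about the $\vec e_3$-axis. $L_\alpha$ acts on mode $k$ by multiplication by $i\alpha(\cdot)$ up to the $J$ shift, so for $|\alpha|\gg C_{U,0}$ all nonzero modes are suppressed by a factor $C/|\alpha|$. This follows from the resolvent of the remaining operator and the nonlinear forcing being $O(C_{U,0}^2)$ in weighted norms.

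The leading part is then the axisymmetric ($\theta$-averaged) profile, which satisfies the Leray system up to $O(C_{U,0}^2/|\alpha|)$ errors plus a swirl term. For an axisymmetric profile the rotation term is exactly trivial, so the averaged profile is a Type~I axisymmetric backward self-similar solution, and such solutions vanish (by Tsai, or by the axisymmetric Type~I results). Closing with the same linearization argument as in 3a gives $U\equiv0$.

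\textbf{Main obstacle.} The hardest step is making the contradiction arguments quantitative at the borderline decay $|U|\sim|y|^{-1}$, i.e.\ choosing weights for which both the rotation term cancels and the weighted norms of the nonlinear and pressure contributions remain finite and controlled uniformly in $\alpha$. I would try the family $w=(1+|y|^2)^{-\beta}$ with $\beta$ slightly above $\tfrac12$ applied to $\Omega$, since $\Omega\sim|y|^{-2}$ makes $\int w|\Omega|^2$ finite and the pressure then does not appear.
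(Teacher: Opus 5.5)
Your plan is a compactness argument that reduces both regimes to Tsai's $\alpha=0$ Liouville theorem. That is genuinely different from the paper, which never uses the $\alpha=0$ result or any maximum principle for $\Pi$. The paper instead integrates $L\Pi+|\Omega|^2=\alpha E$ against a positive $w\in\ker L^*$ with two-sided Gaussian bounds (Proposition~\ref{prop:weight:construction}). It bounds $\alpha\int Ew\,dy$ by $C|\alpha|$ for small $\alpha$, and by $C|\alpha|\,\|\mathcal{R}U\|_{L^2_\mu}\ll1$ for large $\alpha$ (Proposition~\ref{prop:large:alpha}). It then concludes with Proposition~\ref{prop:local:vort}.

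Your route can be completed, but two steps fail as written.
\begin{enumerate}
\item In Step~3b the $\theta$-averaged profile at finite $\alpha$ is not a solution of~\eqref{eq:Leray}. Averaging removes $\alpha\mathcal{R}U$ exactly, so there is no residual swirl term. But it leaves the Reynolds stress $\langle((U)_a\cdot\nabla)(U)_a\rangle_\theta$ and its pressure. Tsai's theorem, or the axisymmetric Type~I results, therefore apply only to a limit along $|\alpha_n|\to\infty$, not to the averaged profile itself.
\item Your normalized ``linearized limit'' cannot be taken when $|\alpha_n|\to\infty$. The vorticity equation gives $\alpha_n\mathcal{R}\Omega_n=O(1)$ while $\|\Omega_n\|\to0$, so $\alpha_n\mathcal{R}$ applied to the normalized field need not converge.
\end{enumerate}

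The repair is simpler than your normalization scheme and uses only your Step~2 with $w\equiv1$. Along a contradiction sequence, Lemma~\ref{lem:gradient} gives $C^1_{\rm loc}$ compactness.

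If $|\alpha_n|\to\infty$, every term of~\eqref{eq:profile:a} except $\alpha\mathcal{R}U$ is bounded in $L^2(B_\rho)$ uniformly in $\alpha$; for the pressure write $\nabla P=R_iR_j\nabla(U^iU^j)$. Hence $\mathcal{R}U_n\to0$ and the limit is axisymmetric. Now test against axisymmetric divergence-free fields: $\mathcal{R}U_n$ is $L^2$-orthogonal to them, and they suffice for an axisymmetric limit. This shows the limit solves~\eqref{eq:Leray}.

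In both regimes the limit therefore vanishes by Tsai's theorem. The uniform decay $|\nabla U_n|\le C_{U,1}(1+|y|^2)^{-1}$ then upgrades local convergence to $\|\nabla U_n\|_{L^\infty(\RR^3)}\to0$. This rules out loss at infinity, and it is the ``additional mechanism'' missing from your Step~2. Indeed, the $\alpha$-independent identity $\tfrac14\|\Omega_n\|_{L^2}^2+\|\nabla\Omega_n\|_{L^2}^2=\int\Omega_n^i\partial_iU_n^j\Omega_n^j\,dy\le\|\nabla U_n\|_{L^\infty}\|\Omega_n\|_{L^2}^2$ forces $U_n\equiv0$ for $n$ large.

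Compared with the paper, this route is shorter. In the large-$\alpha$ regime it needs only $\|(U)_a\|\les|\alpha|^{-1}$, whereas the paper must prove $|\alpha|\,\|\mathcal{R}U\|_{L^2_\mu}\ll1$ because $\alpha E$ is fed into the weighted Bernoulli identity. On the other hand, your route gives no effective values for $\underline{\alpha},\overline{\alpha}$ and rests on the maximum-principle proof at $\alpha=0$. It also does not produce the quantitative weighted framework that the paper reuses for RDSS solutions and for the one-time-slice criterion of Theorem~\ref{thm:local:theorem}.
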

 
Theorem~\ref{thm:main} resolves Conjecture~\ref{conj:Perelman} for $|\alpha|\ll 1$ and $|\alpha|\gg 1$, but leaves open the case   $\alpha \approx 1$. 

The main ideas in the proof of Theorem~\ref{thm:main} are as follows. When $|\alpha| \ll 1$, a natural expectation is that \emph{if} one finds a new proof of the results in~\cite{NRS96,Tsai98} for $\alpha=0$, which is quantitative and does not rely on the maximum principle, then one should obtain the desired Liouville-type result for small $|\alpha|$, with this new proof. This new proof is given in Section~\ref{sec:alpha:small}, and is based on a weighted $L^2$-based enstrophy estimate, which relies on two independent facts.  
First, a naive energy estimate shows that there exists a large enough radius $\bar{R}>0$, such that if the vorticity $\Omega$ has small $L^2$ norm when restricted to the ball $B_{\bar{R}}$,\footnote{Here and throughout the paper, we denote by $B_R$ the ball of radius $R>0$ centered at the origin in $\RR^3$.}  then $U \equiv 0$; see~Proposition~\ref{prop:local:vort}.
Second, we observe that if $U$ satisfies suitable smoothness and decay properties, then the non-normal operator $L = - \Delta + (U + \frac 12 y) \cdot \nabla$  present on the left side of~\eqref{eq:Bernoulli:ineq} is such that its $L^2$-adjoint $L^*$ has an element $w = w(y)$ in its kernel, which is smooth and obeys Gaussian upper and lower bounds; see~Proposition~\ref{prop:weight:construction}. We may then use this kernel element $w$ as a weight in a weighted-$L^2$ energy estimate for $\Omega$ to conclude that $\int_{\RR^3} |\Omega|^2 w dy \lesssim |\alpha|$, where the implicit constant depends only on $C_{U,0}$. Due to the $\alpha$-independent Gaussian lower bound for $w$, we deduce that $\int_{B_{\bar {R}}} |\Omega|^2 dy \lesssim |\alpha|$. 
Combining the above two facts implies Theorem~\ref{thm:main} for $|\alpha| \ll 1$.
 
For the case $|\alpha| \gg 1$ the proof idea is different. For large $|\alpha|$ one may expect that the term $\alpha\big(JU - (Jy\cdot\nabla)U\big)$ present in \eqref{eq:profile} is dominant, and so a smooth solution $U$ wants to stay close to the kernel of the operator $\mathcal{R} = J   - (J y) \cdot \nabla  $. Note that the kernel of $\mathcal{R}$ consists of axisymmetric vector fields (with respect to rotations around the $\vec{e}_3$ axis). In Section~\ref{sec:alpha:large} we are able to quantify this intuition by showing that if $|\alpha|$ is sufficiently large, then a weighted-$L^2$ norm of $\mathcal{R}U$ is $\ll \frac{1}{\alpha}$, where the implicit constant depends on $C_{U,0}$; see Proposition~\ref{prop:large:alpha}. Here it is important that we obtain a weighted-$L^2$ bound on $\mathcal{R}U$ which is $\ll \frac{1}{\alpha}$ as opposed to merely $\lesssim \frac{1}{\alpha}$, and that the weight is bounded independently of $\alpha$. With this bound, we return to the $L^2$-weighted energy estimate for the vorticity with weight $w$ (the element in the kernel of $L^*$ described earlier) and show that if $|\alpha|$ is sufficiently large, then $\int_{B_{\overline{R}}} |\Omega|^2 dy \lesssim \alpha \|e^{-|y|^2/8} \mathcal{R}U\|_{L^2(\RR^3)} \ll 1$. The proof of Theorem~\ref{thm:main} for $|\alpha| \gg 1$   follows as before, because there is not enough enstrophy in a large ball around the origin.

\subsection{Discretely self-similar solutions (DSS)}
The self-similar solutions of 3D Navier-Stokes discussed in Sections~\ref{sec:SS} and~\ref{sec:RSS} are invariant under the scaling symmetry $u(x,t) \mapsto \lambda u (\lambda x , \lambda^2 t)$ \emph{for all} $\lambda > 1$. One can however consider a more general class of solutions, which are only assumed to be invariant under the Navier-Stokes scaling symmetry for \textit{one particular}\footnote{Clearly if $u$ is invariant under $u(x,t) \mapsto \lambda u (\lambda x , \lambda^2 t)$ for one value of $\lambda>1$, then it is also invariant under $u(x,t) \mapsto \lambda^k u (\lambda^k x , \lambda^{2k} t)$ for all $k\in\mathbb{N}$. For consistency, throughout this paper we shall refer to the discrete self-similarity factor $\lambda>1$ as being the one which is \textit{smallest possible}.} value of $\lambda>1$, namely
\[
u(x,t) = \lambda u(\lambda x, \lambda^2 t), \qquad\mbox{for a fixed } \; \lambda > 1.
\]
Such solutions are called \emph{discretely self-similar} (DSS). To emphasize the value $\lambda>1$ of the similarity factor, this class of solutions is often denoted as $\lambda$-DSS; see~\cite[Section~8]{Tsai18}.

We emphasize that discretely self-similar solutions do not correspond to a time-independent similarity profile; instead,  $\lambda$-DSS  solutions representing a finite time blowup as $t \to 0^-$ at $x=0$ are given by the \emph{backwards globally\footnote{These solutions are still referred to as \textit{globally} self-similar because~\eqref{eq:DSS:ansatz} is required to hold for all $y \in \RR^3$.} discretely self-similar} ansatz 
\begin{subequations}
\label{eq:DSS:total}
\begin{equation}
\label{eq:DSS:ansatz}
u(x,t)=\frac{1}{\sqrt{-t}}\, 
U(y,s),
\qquad
y= \frac{x}{\sqrt{-t}},
\qquad 
s=-\log(-t),
\end{equation}
for a \emph{periodic-in-self-similar-time} profile $U\colon\RR^3 \times [0,S] \to\RR^3$, where the time-period is given by
\begin{equation}
\label{eq:DSS:period}
S := 2 \log(\lambda) >0.
\end{equation}
\end{subequations}
Note that as $\lambda \to 1^+$ the profile $U$ formally becomes time-independent since $S \to 0^+$ in~\eqref{eq:DSS:period}, and thus the ansatz~\eqref{eq:DSS:ansatz} becomes the Leray self-similar ansatz~\eqref{eq:SS}.

\begin{remark}[RSS $\subseteq$ DSS]
\label{rem:RSS:DSS}
Note that because $R(s+2\pi) = R(s)$ and $R(0) = {\rm Id}$, rotated self-similar (RSS) solutions as defined in Section~\ref{sec:RSS} for a given $\alpha \in \RR$ are automatically discretely self-similar ($\lambda$-DSS) for any factor $\lambda> 1$ such that  $2\alpha\log(\lambda)\in 2\pi\mathbb{Z}$. Put differently, if $u$ is given by ansatz~\eqref{eq:RSS} with a time-independent profile $\bar U$ and rotation parameter $\alpha \in \RR$, then $u$ satisfies the ansatz~\eqref{eq:DSS:ansatz} with the periodic-in-time profile $U(y,s) = R(\alpha s) \bar U(R(-\alpha s) y)$, and period $S = \frac{2\pi}{|\alpha|}$. Due to~\eqref{eq:DSS:period}, the discrete similarity factor $\lambda$ and the rotation strength $\alpha$ are related via 
\[
\lambda = e^{\frac{\pi}{|\alpha|}}.
\] 
Importantly, we note that \emph{the regime $|\alpha|\ll 1$ corresponds to $\lambda\gg 1$}, while \emph{the regime $|\alpha| \to \infty$ corresponds to $\lambda \to 1^+$}.
\end{remark}

Substituting~\eqref{eq:DSS:ansatz} into~\eqref{eq:NSE}, we discover that the profile $U\colon\RR^3 \times [0,S] \to\RR^3$  is a time-periodic solution of 
\begin{subequations}
\label{eq:profile:DSS}
\begin{align}
\partial_s U + \tfrac12 U + \tfrac12 (y\cdot\nabla)U - \Delta U +  (U\cdot\nabla)U + \nabla P &= 0,
\label{eq:profile:DSS:a}\\
\nabla\cdot U &= 0,
\label{eq:profile:DSS:b}
\end{align}
\end{subequations}
on $\RR^3$, with period $S = 2 \log(\lambda)$ as in~\eqref{eq:DSS:period}.  

Not much seems to be known  about the existence of solutions to \eqref{eq:profile:DSS}, other than the following Liouville-type theorem of Chae and Wolf~\cite{ChaeWolf17}, which concerns the case of discrete self-similarity factors $\lambda$ such that $0< \lambda - 1 \ll 1$. Precisely,~\cite{ChaeWolf17} shows that for every $C_{U,0}>0$ there exists $\lambda_* = \lambda_*(C_{U,0}) > 1$ such that the following holds: if $u\in C^{\infty}((-\infty,0)\times\mathbb{R}^3)$ is a solution of 3D Navier-Stokes which satisfies the Type~I bound~\eqref{eq:type:I} on $(-\infty,0)\times\mathbb{R}^3$ for some $C_{U,0}>0$, and if $u$ is $\lambda$-DSS (meaning that $u$ is given by the ansatz~\eqref{eq:DSS:total}) for some $\lambda \in (1,\lambda_*)$, then $U\equiv 0$.
 
The Chae and Wolf result~\cite{ChaeWolf17} is obtained by a \textit{compactness argument}, which reduces the problem to the known non-existence results for Leray self-similar solutions in~\cite{NRS96,Tsai98}. In light of Remark~\ref{rem:RSS:DSS} which provides a direct analogy between the $0<\lambda-1\ll 1$ regime in $\lambda$-DSS solutions and the $|\alpha|\gg 1$ regime in RSS solutions, it is worth noting that~\cite{ChaeWolf17} gives an \textit{alternative proof} of the large $|\alpha|$ case in Theorem~\ref{thm:main}.  The advantage of the proof we present in this paper (see Section~\ref{sec:alpha:large}) is that it is \emph{quantitative} and provides an explicit a-priori bound for the vorticity. In fact, a slight modification of the proof of Theorem~\ref{thm:main} immediately recovers the main result in~\cite{ChaeWolf17}, which we re-state here for convenience:
\begin{theorem}[Main result for DSS solutions]
\label{thm:main:DSS}
Let $u$ be a solution of the 3D incompressible Navier-Stokes equations on $\RR^3 \times [-1,0)$, which satisfies the Type~I upper bound~\eqref{eq:type:I} for some $C_{U,0}>0$. Assume that there exists $\lambda >1$ and $U\in C^2(\RR^3 \times [0,S])$ with $S= 2 \log(\lambda)$, such that $u$ is a backwards  globally discretely self-similar solution of~\eqref{eq:NSE} with profile $U$ (the DSS ansatz~\eqref{eq:DSS:ansatz} holds). There exists $\underline{\lambda} = \underline{\lambda} (C_{U,0}) >1$, such that if  $1 < \lambda < \underline{\lambda}$, then $U\equiv 0$.
\end{theorem}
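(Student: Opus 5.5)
We mimic the large-$|\alpha|$ argument for RSS solutions, with the rotation operator $\mathcal{R}$ replaced by the self-similar time derivative $\partial_s$. Throughout, $S = 2\log\lambda \to 0^+$ plays the role of $1/|\alpha|$. The overall aim is to show that a weighted $L^2$ norm of $\Omega$ on a large ball $B_{\bar R}$ is small once $S$ is small, and then to invoke (a time-dependent version of) Proposition~\ref{prop:local:vort}.

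\textbf{Step 1: regularity and decay of the profile.} From the Type~I bound, $|U(y,s)| \le C_{U,0}/(1+|y|)$ uniformly in $s$. Local parabolic regularity, applied on unit cylinders in $(y,s)$ and using periodicity to work on all of $\RR^3\times\RR$, then gives bounds on $\nabla U$, $\nabla^2 U$, $\partial_s U$ and the pressure $P$ that depend only on $C_{U,0}$. Since the constants in the profile equation \eqref{eq:profile:DSS} do not involve $\lambda$, these bounds are uniform in $\lambda$.

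\textbf{Step 2: smallness of $\partial_s U$.} We need $\|e^{-|y|^2/8}\partial_s U\|_{L^2(\RR^3\times[0,S])}$ to be small relative to $S$, in analogy with Proposition~\ref{prop:large:alpha}. First, periodicity of $U$ on a period of length $S$ together with the uniform $C^1_s$-type bounds gives
\[
\|U(\cdot,s)-\bar U\|\lesssim S,
\qquad \bar U := \tfrac1S\int_0^S U\,ds .
\]
Second, averaging \eqref{eq:profile:DSS} over a period kills $\partial_s U$. Hence $\bar U$ solves the Leray system \eqref{eq:Leray} up to an error $\overline{(U\cdot\nabla)U}-(\bar U\cdot\nabla)\bar U = O(S^2)$ in weighted norms.

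This is the main obstacle. A crude bound only gives $\partial_s U = O(1)$, whereas we need smallness in the weighted vorticity estimate. I would first try a weighted energy estimate for $V := \partial_s U$: differentiate \eqref{eq:profile:DSS} in $s$, test against $V e^{-|y|^2/4}$, and integrate over one period. Since $\int_0^S \frac{d}{ds}\|V\|^2\,ds = 0$ by periodicity, this yields a coercive bound for the gradient of $V$. Combining it with the Poincaré-in-$s$ inequality $\|V\|_{L^2_s} \le \frac{S}{2\pi}\|\partial_s V\|_{L^2_s}$ for mean-zero periodic functions should give
\[
\|V\|_{L^2_{s,y}} \ll 1 \quad\text{as } S\to 0 .
\]

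\textbf{Step 3: weighted vorticity estimate.} With $\bar U$ in hand, use Proposition~\ref{prop:weight:construction} to build the weight $w$ solving $L^*w=0$ for $L = -\Delta + (\bar U+\tfrac12 y)\cdot\nabla$. This $w$ has Gaussian bounds depending only on $C_{U,0}$. Next, write the Bernoulli-type equation \eqref{eq:Bernoulli:ineq} for $\Pi(\cdot,s)$, which now contains an extra source from $\partial_s U$, namely $y\cdot\partial_s U$ and $U\cdot\partial_s U$ terms. Multiply by $w$, integrate in $y$ and over a period, and move the derivatives onto $w$. Together with the replacement error $U-\bar U$ from Step 2, this gives
\[
\int_0^S\!\!\int |\Omega|^2 w \,dy\,ds \lesssim S\cdot o(1).
\]
By the Gaussian lower bound on $w$, $\Omega$ is small in $L^2(B_{\bar R})$ for some $s$. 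Proposition~\ref{prop:local:vort}, whose naive energy argument adapts at a fixed time slice or with a period average, then gives $U\equiv0$ for $\lambda<\underline\lambda$.
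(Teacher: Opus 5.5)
The gap is in Step~2, at exactly the point you call the main obstacle: the mechanism you propose does not give $\|\p_sU\|\ll1$. Test the $s$-differentiated equation for $V=\p_sU$ against $Ve^{-|y|^2/4}$ and integrate over a period. This gives $\int_0^S\bigl(\tfrac12\|V\|_{L^2_\mu}^2+\|\nabla V\|_{L^2_\mu}^2\bigr)ds$ equal to the period integral of $-\langle (V\cdot\nabla)U,V\rangle_{L^2_\mu}-\langle (U\cdot\nabla)V,V\rangle_{L^2_\mu}-\langle\nabla\p_sP,V\rangle_{L^2_\mu}$. The stretching term is only bounded by $C_{U,1}\|V\|_{L^2_\mu}^2$, and $C_{U,1}$ is not small. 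The pressure term is nonlocal. So nothing can be absorbed. Even a good bound on $\nabla V$ would not help the Poincar\'e inequality in $s$ that you quote, since that inequality needs control of $\p_sV=\p_s^2U$. Consequently Step~3, which treats the source $(U+\tfrac12y)\cdot\p_sU$ in the Bernoulli identity as small, is unsupported as written. The smallness you want is in fact elementary, so your remark that a crude bound only gives $\p_sU=O(1)$ is not right. Use \eqref{eq:profile:DSS:a} twice, together with the all-order decay bounds for $\nabla^kU$ and $\nabla^kP$, to get $|\p_s^2U|\les(1+|y|)^{-1}$. Since $\p_sU$ has zero mean over a period, $|\p_sU|\le S\sup_s|\p_s^2U|$. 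This is how Lemma~\ref{lem:many:gradients2} proceeds, and with this fix your route closes.

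The paper needs no smallness of $\p_sU$ at all, because of an observation your proposal misses. The new source in \eqref{eq:L:Pi:alpha:s} (with $\alpha=0$) is an exact time derivative: $-(U+\tfrac12y)\cdot\p_sU=-\tfrac12\p_s(|U|^2+U\cdot y)$. The weight $\bar w$ is built from $\mean{U}_s$ and does not depend on $s$. So multiplying \eqref{eq:L:Pi:alpha:s:good} by $\bar w$ and integrating over one period annihilates this term by periodicity. What remains is $\tilde U\cdot\nabla\Pi$, which is $O(S)$ using only $|\tilde U|\le S\sup_s|\p_sU|\les S(1+|y|)^{-1}$.

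Your endgame also needs repair. Smallness of $\|\Omega(\cdot,s)\|_{L^2(B_{\bar R})}$ for \emph{some} $s$ is not enough. Proposition~\ref{prop:local:vort} also does not adapt at a single time slice, because of the term $\tfrac12\tfrac{d}{ds}\|\Omega\|_{L^2}^2$ in \eqref{eq:enstrophy:s}. You must integrate that inequality over a period, which requires the local enstrophy to be small for \emph{every} $s$. This follows from Jensen applied to $\mean{\Omega}_s$, together with $\|\tilde\Omega(\cdot,s)\|_{L^2}\les S$, which comes from $|\nabla\tilde U|\les S(1+|y|^2)^{-1}$.

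Finally, regularity on unit cylinders in $(y,s)$ cannot produce the decay $|\nabla U|\les(1+|y|)^{-2}$, because the drift $\tfrac12y\cdot\nabla$ is unbounded. You need that decay for $\Omega\in L^2$ and for the choice of $\bar R$. It comes from rescaled cylinders in the original $(x,t)$ variables, as in Lemma~\ref{lem:many:gradients}.
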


The proof of Theorem~\ref{thm:main:DSS} is given in Section~\ref{sec:DSS:RDSS} and appears as a particular case ($\alpha=0$) of the results discussed in the next subsection.

\subsection{Rotated Discretely self-similar solutions (RDSS)}
Analogously to how the discretely self-similar (DSS) solutions generalize the Leray self-similar (SS) solutions by requiring that scale invariance holds for a  minimal $\lambda>1$ instead of for all $\lambda>0$, one may generalize the class of rotated self-similar (RSS) solutions by requiring that invariance under the combined effect of rotation (with self-similar angular speed $\alpha$) and scaling holds for a  minimal $\lambda>1$, instead of for all $\lambda>0$. This leads one to the concept of \textit{rotated discretely self-similar} (RDSS) solutions, as being those for which\footnote{In~\cite[Equation~(1.9)]{BradshawTsai17} and~\cite[Equation~(8.10)]{Tsai18} the definition replaces the existence of an $\alpha \in \RR$ with the existence of a phase $\phi = 2\alpha \log \lambda \in \RR$. Since $\lambda>1$, these are clearly equivalent definitions; we have chosen this definition for consistency with the notation in Section~\ref{sec:RSS}.}
\[
u(x,t) = \lambda R(2\alpha \log \lambda) u \bigl( \lambda R(-2\alpha \log \lambda)x ,\lambda^2 t \bigr),
\qquad \mbox{for a fixed } \; \lambda >1, \; \mbox{ and a fixed } \; \alpha \in \RR,
\]
for all $x\in \RR^3$ and $t \in [-1,0)$. To emphasize the value $\lambda>1$ of the similarity factor and the value $\alpha \in \RR$ of the angular speed, we shall sometimes denote this class of solutions as $(\alpha,\lambda)$-RDSS.

As with DSS solutions, we emphasize that RDSS solutions do not correspond to a time-independent rotated similarity profile; instead, one may verify that RDSS solutions representing a finite time blowup as $t \to 0^-$ at $x=0$ are given by the \emph{backwards globally\footnote{These solutions are still referred to as \textit{globally} self-similar because~\eqref{eq:RDSS:ansatz} is required to hold for all $y \in \RR^3$.} rotated discretely self-similar} ansatz 
\begin{subequations}
\label{eq:RDSS:total}
\begin{equation}
\label{eq:RDSS:ansatz}
u(x,t)=\frac{1}{\sqrt{-t}}\, 
R(\alpha s) U(y,s),
\qquad
y= \frac{R(-\alpha s) x}{\sqrt{-t}},
\qquad 
s=-\log(-t),
\end{equation}
for a \emph{periodic-in-self-similar-time} profile $U\colon\RR^3 \times [0,S] \to\RR^3$, where the time-period is given by
\begin{equation}
\label{eq:RDSS:period}
S := 2 \log(\lambda) >0.
\end{equation}
\end{subequations}
Note that as $\lambda \to 1^+$ the profile $U$ formally becomes time-independent since $S \to 0^+$ in~\eqref{eq:RDSS:period}, and thus the ansatz~\eqref{eq:RDSS:ansatz} becomes the rotated self-similar ansatz~\eqref{eq:RSS}, with the same value of $\alpha$. Moreover, since $R(0) = {\rm Id}$, setting $\alpha =0$ in \eqref{eq:RDSS:total} recovers the discretely self-similar ansatz~\eqref{eq:DSS:ansatz}, with the same value of $\lambda$. In particular, in light of Remark~\ref{rem:RSS:DSS} we have the hierarchy (see also~\cite{BradshawTsai17,Tsai18})
\begin{equation*}
\text{SS}\subseteq \text{RSS}\subseteq \text{DSS}\subseteq \text{RDSS},
\end{equation*}
and hence Liouville-type theorems should be hardest to establish in the setting of rotated discretely self-similar solutions.

Note that if $u$ is a $(\alpha,\lambda)$-RDSS solution to 3D Navier-Stokes, then by substituting~\eqref{eq:RDSS:ansatz} into \eqref{eq:NSE}, we discover that the profile $U \colon \RR^3 \times [0,S] \to \RR^3$ is a time-periodic solution of
\begin{subequations}
\label{eq:profile:RDSS}
\begin{align}
\partial_s U + \alpha\big(JU - (Jy\cdot\nabla)U\big) + \tfrac12 U + \tfrac12 (y\cdot\nabla)U - \Delta U +  (U\cdot\nabla)U + \nabla P &= 0,
\label{eq:profile:RDSS:a}\\
\nabla\cdot U &= 0,
\label{eq:profile:RDSS:b}
\end{align}
\end{subequations}
on $\RR^3$, with period $S = 2 \log(\lambda)$ as in~\eqref{eq:RDSS:period}.  

A small modification of the adjoint weight construction from the proof of Theorem~\ref{thm:main} immediately leads to the following  generalization of Theorem~\ref{thm:main:DSS}:
\begin{theorem}[Main result for RDSS solutions]
\label{thm:main:RDSS}
Let $u$ be a solution of the 3D incompressible Navier-Stokes equations on $\RR^3 \times [-1,0)$, which satisfies the Type~I upper bound~\eqref{eq:type:I} for some $C_{U,0}>0$. Assume that there exist $\alpha \in \RR$,  $\lambda >1$, and $U\in C^2(\RR^3 \times [0,S])$ with $S= 2 \log(\lambda)$, such that $u$ is a backwards globally rotated discretely self-similar solution of~\eqref{eq:NSE} with profile $U$ (the RDSS ansatz~\eqref{eq:RDSS:ansatz} holds).  There exist $\underline{\alpha} = \underline{\alpha}(C_{U,0})>0$ and $\underline{\lambda} = \underline{\lambda} (C_{U,0}) >1$,  such that if $|\alpha|\leq \underline{\alpha}$ and $1 < \lambda < \underline{\lambda}$, then $U\equiv 0$. Moreover, there exist $\overline{\alpha} = \overline{\alpha}(C_{U,0})>0$ and $\underline{\lambda} = \underline{\lambda} (C_{U,0}) >1$,  such that if $|\alpha|\geq \overline{\alpha}$ and $1< \lambda< \underline{\lambda}^{\frac{1}{1+\alpha^2}}$, then $U\equiv 0$.
\end{theorem}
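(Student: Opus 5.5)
The plan is to run the weighted-$L^2$ enstrophy argument outlined for Theorem~\ref{thm:main}, now in the time-periodic setting. Two extra ingredients are needed. First, the adjoint weight becomes time-dependent. Second, periodicity in $s$ replaces stationarity, so that after integrating over one period $[0,S]$ the time-derivative terms $\int \p_s(\cdots)\,dy\,ds$ vanish.

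\textbf{Step 1 (a priori bounds).} From the Type~I bound~\eqref{eq:type:I} and the RDSS ansatz we get $|U(y,s)|\le C_{U,0}(1+|y|)^{-1}$ uniformly in $s$. Local parabolic regularity for Navier--Stokes (Type~I plus smoothness away from the origin) then gives weighted bounds on $\nabla U$, $\nabla^2 U$ and on the vorticity $\Omega=\nabla\times U$. These depend only on $C_{U,0}$, uniformly in $s$, $\alpha$ and $\lambda$, since they come from the original variables, as stressed in Remark~\ref{rem:C:indep:alpha}.

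\textbf{Step 2 (the weight and the energy identity).} For the time-dependent operator $L_s = \p_s -\Delta + (U+\tfrac12 y + \alpha Jy)\cdot\nabla$ we construct an $S$-periodic positive solution $w(y,s)$ of the adjoint equation $-\p_s w - \Delta w - \nabla\cdot\big((U+\tfrac12y)w\big)=0$. The term $\alpha Jy$ is divergence free, so it drops out of the divergence, or contributes only the transport term $\alpha Jy\cdot\nabla w$. We want Gaussian bounds $c\,e^{-|y|^2/4}\lesssim w\lesssim C e^{-|y|^2/4}$ with constants depending only on $C_{U,0}$. The construction mimics Proposition~\ref{prop:weight:construction}: perturb from the Gaussian $e^{-|y|^2/4}$, which lies in the kernel of the $U=0$ adjoint and is radial, hence annihilated by $Jy\cdot\nabla$. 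We then solve the periodic problem by a fixed point or a Poincaré-map argument over one period.

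Next we write the vorticity equation for~\eqref{eq:profile:RDSS}. It has the form $\p_s\Omega + \Omega + \alpha(J\Omega-(Jy\cdot\nabla)\Omega)+\tfrac12 y\cdot\nabla\Omega-\Delta\Omega+U\cdot\nabla\Omega=\Omega\cdot\nabla U$. We test it with $\Omega w$ and integrate over $\RR^3\times[0,S]$. Periodicity kills $\int\p_s(|\Omega|^2 w)$. The adjoint equation for $w$ absorbs the transport and $\p_s$ terms, and the $J\Omega$ term vanishes by antisymmetry. What remains is an enstrophy identity: $\int_0^S\!\int(|\nabla\Omega|^2 + c|\Omega|^2)w$ is controlled by the stretching term plus, if $w$ is not exactly radial, $\alpha$ times a weighted integral of $|\Omega|^2\,|Jy\cdot\nabla w|$. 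We would then apply the analogue of Proposition~\ref{prop:local:vort} with time averages: small $\int_0^S\!\int_{B_{\bar R}}|\Omega|^2$ forces $U\equiv0$.

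\textbf{Step 3 (the two regimes).} For $|\alpha|\le\underline\alpha$ and $\lambda$ close to $1$, we compare with the case $\alpha=0$, $S=0$. Smallness of $S$ means $U(\cdot,s)$ is close to its time average $\bar U$ via $\|\p_s U\|$ weighted bounds, and the error terms are $O(|\alpha|+S)$. The stretching term must then be handled exactly as in the quantitative proof for $\alpha=0$. For $|\alpha|\ge\overline\alpha$ we reproduce Proposition~\ref{prop:large:alpha}. The dominant operator is now $\p_s+\alpha\mathcal R$, so we need a weighted bound $\|e^{-|y|^2/8}\mathcal R U\|_{L^2_{s,y}}\ll\alpha^{-1}$. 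Here the $\p_s$ term competes with $\alpha\mathcal R$, and the condition $S\lesssim \log\underline\lambda/(1+\alpha^2)$ is exactly what makes the $\p_s$ contribution negligible. Heuristically, $\|\p_s U\|\lesssim 1+\alpha^2$ from Step 1, because the rotation term is of size $\alpha|y|$ and second-order effects contribute $\alpha^2$, so the variation over a period is $S(1+\alpha^2)\ll1$.

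\textbf{Main obstacle.} The main obstacle is the periodic weight construction with $\alpha$-independent Gaussian bounds, together with the large-$\alpha$ estimate on $\mathcal RU$ in the presence of $\p_s U$. The first thing I would try is to treat $\p_s U$ as a perturbation of size $S(1+\alpha^2)$ after freezing at $s$-averages, which would let the stationary estimates be used directly.
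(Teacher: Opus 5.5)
There is a genuine gap in Step 2, and it sits at the heart of the argument. Testing the vorticity equation against $\Omega w$ does remove the transport and $\partial_s$ terms, and, for axisymmetric $w$, the rotation terms. Here $w$ is an $S$-periodic solution of the adjoint Fokker--Planck equation. What it does not remove is the vortex-stretching term $\int_0^S\!\int(\Omega\cdot\nabla U)\cdot\Omega\,w\,dy\,ds$. That term is bounded only by $C_{U,1}\int_0^S\!\int|\Omega|^2w$ and carries no smallness in $\alpha$ or $S$. For instance, at $\alpha=0$, $S=0$ your identity reads $\int(|\nabla\Omega|^2+|\Omega|^2)w=\int(\Omega\cdot\nabla U)\cdot\Omega\,w$. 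This forces $\Omega\equiv0$ only for small data, i.e.\ when $\|\nabla U\|_{L^\infty}<1$.

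So your identity cannot produce the smallness of $\int_{B_{\bar R}}|\Omega|^2$ that Proposition~\ref{prop:local:vort} needs as input. The reasoning is circular: Proposition~\ref{prop:local:vort} is exactly the statement that stretching is harmless \emph{once} the local enstrophy is already small. Your remark that the stretching term is ``handled exactly as in the quantitative proof for $\alpha=0$'' has no content within your framework. Likewise, in the large-$\alpha$ regime nothing in your identity turns smallness of $|\alpha|\,\|\mathcal{R}U\|_{L^2_\mu}$ into smallness of the enstrophy.

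The missing idea is the Bernoulli function $\Pi=P+\tfrac12|U|^2+\tfrac12y\cdot U$. In the RDSS setting it satisfies the scalar identity
\[
L\Pi+|\Omega|^2=\alpha E-(U+\tfrac12y)\cdot\partial_sU,\qquad E=\tfrac12\partial_\theta(|U|^2+U\cdot y),\qquad L=-\Delta+(U+\tfrac12y)\cdot\nabla .
\]
Here $|\Omega|^2$ enters with a sign and there is no stretching term. Since $\Pi$ is bounded, pairing with a weight in $\ker L^*$ eliminates $\Pi$ and leaves $\int|\Omega|^2w$ equal to error terms.

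The paper also never needs a time-periodic weight:
\begin{enumerate}
\item It freezes the drift at the time average $\langle U\rangle_s$ and takes the $s$-independent weight $\bar w\in\ker\bar L^*$ from Proposition~\ref{prop:weight:construction}.
\item It writes $L=\bar L+\tilde U\cdot\nabla$, where $\tilde U=O(S)$, or $O(S(1+|\alpha|)^2)$ when $\alpha$ is large.
\item Because $\bar w$ does not depend on $s$, the term $-\tfrac12\partial_s\int(|U|^2+U\cdot y)\bar w$ vanishes after averaging over a period.
\item In the large-$\alpha$ case, $|\alpha E|\le|\alpha|(3|U|+|y|)|\mathcal{R}U|$, which is where $|\alpha|\,\|\mathcal{R}U\|_{L^2_\mu}\ll1$ enters.
\item Jensen's inequality and the bound on $\tilde\Omega$ then give pointwise-in-$s$ smallness of $\|\Omega(\cdot,s)\|_{L^2(B_{\bar R})}$. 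Only at that point is the unweighted enstrophy identity, integrated over a period, used to conclude.
\end{enumerate}

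A secondary point: the $\mathcal{R}U$ estimate needs $\partial_sU$ itself to be small in $L^2_\mu$, not just the variation of $U$ over a period. Your heuristic bound $\|\partial_sU\|\lesssim1+\alpha^2$ controls only $U-\langle U\rangle_s$. The paper bounds $\partial_s^2U$ by $(1+|\alpha|)^2$ using the differentiated equation. It then uses that $\partial_sU$ has zero mean over a period (Poincar\'e in $s$) to obtain $|\partial_sU|\lesssim S(1+|\alpha|)^2(1+|y|)^{-1}$.
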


The proof of Theorem~\ref{thm:main:RDSS} is given in Sections~\ref{sec:DSS:RDSS}--\ref{sec:DSS:RDSS:Ben}. The arguments may be viewed as an extension of the proof of Theorem~\ref{thm:main}, with the additional knowledge that $\p_s U$ is not zero, but is instead \textit{small} in a suitable sense, if the period $S$ is assumed small (equivalently, if $\lambda$ is close to $1$). 

\begin{remark}[Navier-Stokes solitons and breathers]
\label{rem:solitons}
We are indebted to V.~\v{S}ver\'ak for pointing out the following connection. In the terminology of geometric flows, given an evolution $\partial_t u = F[u]$ with a symmetry group $G$ acting via $u\mapsto R(g)u$, a solution is called a \emph{soliton} if
$u(t)=R(g(t))\,U$ for a fixed profile $U$. RSS solutions are solitons of~\eqref{eq:NSE}
in this sense. The symmetry group of~\eqref{eq:NSE} is generated by scalings,
rotations, translations, and Galilean boosts, and the solitons (those with bounded velocity and decaying pressure) may be classified (up to conjugation by Galilean symmetries) as:
steady states; rotating waves $u(x,t)=R(\omega t)\,U(R(-\omega t)x)$; and the backward rotated self-similar solutions~\eqref{eq:RSS}, as well as their forward analogues. In particular,
RSS solutions are the general scaling solitons of~\eqref{eq:NSE}. Backward solitons are
conjecturally trivial (cf.~Conjecture~\ref{conj:Perelman}), while nontrivial forward
self-similar solutions exist for every $(-1)$-homogeneous initial datum which is
locally H\"older continuous away from the origin~\cite{JiaSverak14}, and play a central
role in the non-uniqueness program~\cite{JiaSverak15,GuillodSverak23}. Finally, the DSS and
RDSS solutions  are \textit{periodic} modulo $G$, i.e., the Navier-Stokes analogues of
\emph{breathers}, and Theorems~\ref{thm:main:DSS}--\ref{thm:main:RDSS} may be viewed as
conditional results of \emph{no shrinking breathers} type, in the spirit
of~\cite{Perelman02}.
\end{remark}

\subsection{No Type~I blowup if nearly self-similar on one time slice}
Thus far, the Liouville-type theorems in this paper concern \emph{globally} backwards self-similar solutions of the 3D Navier-Stokes equations, i.e., solutions for which the self-similar ansatz holds on all of $\RR^3$; see~\eqref{eq:SS} for SS solutions, \eqref{eq:RSS} for RSS solutions, \eqref{eq:DSS:total} for DSS solutions, and~\eqref{eq:RDSS:total} for RDSS solutions. The method of proof we have employed is however very flexible, and applies with minor modifications to solutions with \emph{no exact self-similarity assumption}, neither local, nor global. 

In order to highlight this proof strategy, we prove a regularity criterion of \emph{local} type. We assume only that the solution satisfies a Type~I upper bound in a unit parabolic cylinder, and we assume that there is a single time (prior to blowup) at which the solution is locally \emph{approximately} backward self-similar (cf.~\eqref{eq:Ds:bound:xt} below); we then conclude that the space-time (top) center of the parabolic cylinder is a regular point\footnote{We say that $(0,0)$ is a \emph{regular point} of the solution $u$ of~\eqref{eq:NSE} if $u\in L^\infty(B_r\times(-r^2,0))$ for some $r>0$, cf.~\cite{CKN82}.} of 3D Navier-Stokes. Our main result is Theorem~\ref{thm:local:theorem} below. Note that no exact self-similarity (neither local nor global) is assumed.

\begin{theorem}[Approximately self-similar Type~I solutions are regular]
\label{thm:local:theorem}
Let $(u,p)$ be a smooth solution to \eqref{eq:NSE} on $B_1\times[-1,0)$ satisfying the Type~I bound
\begin{equation}
\label{eq:local:typeI}
|u(x,t)|\leq \frac{C_u }{\sqrt{-t}+|x|},
\qquad (x,t)\in B_1 \times [-1,0), 
\end{equation}
for some constant $C_u  >0$. Assume that the pressure is bounded far from $(0,0)$; that is, assume there exists a constant $C_p>0$ such that the pressure satisfies the bound
\begin{equation}
\label{eq:local:pbound}
|p(x,t)|\leq C_p,\qquad (x,t)\in A\times [-1,0), 
\qquad A:= \{\tfrac{1}{2}< |x|< \tfrac{3}{4}\}.
\end{equation}
There exist $\delta_0=\delta_0(C_u )\in(0,1]$ and $s_0=s_0(C_u ,C_p)\geq 1$ such that if, at a single time $\bar t\in(-e^{-s_0},0)$ we have the bound
\begin{equation}
\label{eq:Ds:bound:xt}
\sqrt{-\bar t}\,\bigl\| (-\bar t)\,\partial_t u(\cdot,\bar t) - \tfrac12 u(\cdot,\bar t) - \tfrac12 (x\cdot\nabla) u(\cdot,\bar t) \bigr\|_{L^\infty(B_1)}
\leq \delta_0 ,
\end{equation}
then $(0,0)$ is a regular point.
\end{theorem}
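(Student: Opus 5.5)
We pass to self-similar variables around $(0,0)$. Define $y = x/\sqrt{-t}$, $s=-\log(-t)$, and set $U(y,s) = \sqrt{-t}\,u(x,t)$, $P(y,s) = (-t)\,p(x,t)$. Then $U$ solves \eqref{eq:profile:DSS} on the expanding balls $|y|<e^{s/2}$. The Type~I bound \eqref{eq:local:typeI} becomes $|U(y,s)|\le C_u/(1+|y|)$ on this region.

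A direct computation shows that the quantity in \eqref{eq:Ds:bound:xt} is exactly $\|\p_s U(\cdot,\bar s)\|_{L^\infty(|y|<e^{\bar s/2})}$, where $\bar s=-\log(-\bar t)>s_0$. So the hypothesis says that at the single self-similar time $\bar s$ the profile $U(\cdot,\bar s)$ solves the stationary Leray system \eqref{eq:Leray} up to a forcing $\p_s U$ of size $\le \delta_0$, on a ball of radius $\bar R_0 := e^{\bar s/2}\ge e^{s_0/2}$.

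The plan is to run the $\alpha=0$ argument of Section~\ref{sec:alpha:small} for this fixed time slice, with $\alpha\,\mathcal{R}U$ replaced by $\p_s U$. We localize with a cutoff $\chi$ supported in $|y|<\bar R_0/2$ and multiply the vorticity equation by $\chi\,w\,\Omega$, where $w$ is the Gaussian kernel element of $L^*$ from Proposition~\ref{prop:weight:construction}.

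Building $w$ requires $U(\cdot,\bar s)$ on all of $\RR^3$. We take a divergence-free extension of $U$ outside $|y|>\bar R_0/2$ that keeps the bound $C_u/(1+|y|)$ and the needed smoothness. Smoothness is supplied by local parabolic regularity: the Type~I bound controls $\nabla U$ and $\nabla^2 U$ in terms of $C_u$ away from the boundary of the region. Since only the Gaussian bounds of $w$ matter, and these depend only on $C_u$, the extension costs nothing in the constants.

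The pressure enters only through $\nabla P$, which disappears after taking curl. The localization errors are supported where $|y|\sim \bar R_0$, and there $w$ is super-exponentially small, $w\lesssim e^{-c\bar R_0^2}$. Together with the Type~I bound, this makes the errors $\lesssim e^{-c e^{s_0}}$.

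It remains to control $\Omega$ and $\p_s\Omega$ in the error terms. Pointwise bounds in terms of $C_u$ come from the Type~I bound and interior regularity, while $\p_s\Omega$ enters only through an integration by parts that puts the derivative back on $\p_s U$. The result is
\[
\int \chi |\Omega(\cdot,\bar s)|^2 w\,dy \lesssim_{C_u} \delta_0 + e^{-c e^{s_0}}.
\]

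The role of \eqref{eq:local:pbound} is to make the localized energy estimate of Proposition~\ref{prop:local:vort} work at this time slice. That proposition needs some control of $P$ at large $|y|$. By Calderón--Zygmund bounds applied to the localized pressure, combined with the assumed bound on $p$ in the annulus $A$, we get $P$ bounded by $C(C_u)+C_p e^{-\bar s}$ on $|y|\lesssim \bar R_0$. Choosing $s_0$ large depending on $C_p$ makes this a harmless error. The Gaussian lower bound on $w$ then yields smallness of $\|\Omega(\cdot,\bar s)\|_{L^2(B_{\bar R})}$.

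The final step is the conclusion, and it is where I expect the main obstacle. Proposition~\ref{prop:local:vort} as used in the global setting concludes $U\equiv0$ from a stationary equation. Here $U$ is not stationary, so instead of concluding $U\equiv 0$ we must extract a smallness condition that implies regularity.

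The approach is to turn small enstrophy on the large ball $B_{\bar R}$ at time $\bar s$, together with the Type~I bound and $|\p_s U|\le\delta_0$, into smallness of a scale-invariant quantity. Concretely, we aim to show that $\frac1r\int_{Q_r}|\nabla u|^2 + r^{-2}\int_{Q_r}|u|^3+|p|^{3/2}$ is small for a suitable parabolic cylinder $Q_r$ ending at $(0,\bar t)$ and subsequently at $(0,0)$. Then the $\varepsilon$-regularity of \cite{CKN82} applies.

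The difficulty is that the information is given only at one time slice, while $\varepsilon$-regularity needs space-time smallness on a whole cylinder. I would try first to propagate smallness forward in $s$ by a weighted energy estimate for $\Omega$ on $[\bar s,\infty)$. The Gaussian weight $w(y)$ keeps the quadratic nonlinearity perturbative once the enstrophy is below a threshold depending only on $C_u$, as in the argument leading to Proposition~\ref{prop:local:vort}. This gives smallness for all $s\ge \bar s$ on $B_{\bar R}$, which translates into the required smallness on shrinking cylinders around $(0,0)$, and hence $(0,0)$ is regular.
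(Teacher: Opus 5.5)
Your overall architecture (a weighted estimate on the single slice $\bar s$, forward propagation in $s$, then Caffarelli--Kohn--Nirenberg) matches the paper's. However, there are two genuine gaps.

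\textbf{First gap: the wrong identity at time $\bar s$.} The step that turns $|\p_sU(\cdot,\bar s)|\le\delta_0$ into small weighted enstrophy cannot come from the vorticity equation. Testing it against $\chi w\,\Omega$ with $L^*w=0$ gives
\[
\int\chi w\bigl(|\nabla\Omega|^2+|\Omega|^2\bigr)\,dy=\int\chi w\,(\Omega\cdot\nabla U)\cdot\Omega\,dy-\int\chi w\,\p_s\Omega\cdot\Omega\,dy+(\text{cutoff terms}).
\]
The stretching term is only bounded by $C_{U,1}\int\chi w|\Omega|^2$. It is the same size as the left side, so nothing small follows unless $U$ itself is small.

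The mechanism of Section~\ref{sec:alpha:small} is instead the Bernoulli identity, here \eqref{eq:Bernoulli:local}: $\bar L\Pi+|\Omega|^2=-(U+\tfrac12y)\cdot\p_sU$. In it $|\Omega|^2$ enters linearly with a sign and there is no cubic term. Testing it against a positive weight adapted to $\bar L^*$ gives $\int|\Omega|^2w\les\delta_0$, plus localization terms. Those terms contain $\Pi$, hence $P$ itself, near the edge of $\{|y|<e^{\bar s/2}\}$, and that is exactly where \eqref{eq:local:pbound} enters.

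So you have the role of the pressure hypothesis backwards: it does not ``disappear after taking curl'', and Proposition~\ref{prop:local:vort} involves no pressure at all. The paper uses the Dirichlet principal eigenfunction $w_{\bar R}$ of $\bar L^*$ on $B_{\bar R}$, with $\bar R=\tfrac58e^{\bar s/2}$, so no extension of $U$ is needed.
\begin{itemize}
\item The error terms are $-\lambda_{\bar R}\int\Pi w_{\bar R}$ and $-\int_{\p B_{\bar R}}\Pi\,\p_nw_{\bar R}$.
\item Both are $\les\lambda_{\bar R}\les\bar R^{-2}$. This uses $\|\p_nw_{\bar R}\|_{L^1(\p B_{\bar R})}=\lambda_{\bar R}\|w_{\bar R}\|_{L^1}$, together with an $L^6(B_{\bar R})$ bound and a bound on $\p B_{\bar R}$ for $P$, both derived from \eqref{eq:local:pbound}.
\item Harnack then bounds $w_{\bar R}$ from below on $B_{2R_{**}}$.
\end{itemize}
Your extension-plus-Gaussian-weight variant should also work, but only when run on the Bernoulli identity, with polynomial bounds on $\Pi$ and $\nabla w$ in the transition annulus.

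\textbf{Second gap: the propagation in $s$.} This cannot be done with a Gaussian-weighted enstrophy. The weight $w$ is built from $U(\cdot,\bar s)$, so $L(s)^*w\ne0$ for $s>\bar s$. You have no control of $U(\cdot,s)-U(\cdot,\bar s)$, since $\p_sU$ is small only at one time. More fundamentally, at Gaussian (unit) scale the stretching term cannot be absorbed: $\nabla U$ is recovered from $\Omega$ nonlocally, and localization and Biot--Savart tails at scale $O(1)$ are of the size of the Type~I constants, not small. This is also why Proposition~\ref{prop:local:vort} uses norms on all of $\RR^3$, which you do not have here.

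What works (Lemma~\ref{lem:local:propagation}) is the unweighted enstrophy $\mathcal F^2=\int\chi^2|\Omega|^2$, with $\chi$ a cutoff at a \emph{large} radius $R$.
\begin{itemize}
\item While $\mathcal F\le\theta_*$, the damping $\tfrac14\mathcal F^2$ and the dissipation absorb the stretching term, via Gagliardo--Nirenberg and a div--curl bound for a cutoff of $U$.
\item The Type~I bounds make the influx through the cutoff $O(K^3R^{-1})$. This is small for $R\ge R_*(\theta_*,C_u)$ and $s\ge2\log(8R)$.
\end{itemize}
Accordingly, the single-slice smallness you must produce is on $B_{2R_{**}}$ with $R_{**}$ large in terms of $C_u$, so $\delta_0$ depends on $R_{**}$. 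Smallness on $B_{\sqrt{8C_{U,1}}}$ is not enough, because in the CKN step the tail $\int_{|y|>R}|\nabla U|^2\les C_{U,1}^2R^{-1}$ must fall below threshold.

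Finally, the criterion to invoke is $\limsup_{r\to0}\frac1r\int_{Q_r}|\nabla u|^2\le\eps_*$ for suitable weak solutions. The $|u|^3+|p|^{3/2}$ quantity is merely bounded under a Type~I bound. Suitability is the second place \eqref{eq:local:pbound} is used: it puts the harmonic part of $p$ in $L^{3/2}$.
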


\begin{remark}[The time derivative in self-similar coordinates]
The operator $(-t) \partial_t - \frac 12 - \frac 12 x \cdot \nabla$ appearing in~\eqref{eq:Ds:bound:xt} is $-\tfrac12$ times the generator of the self-similar scaling $u\mapsto\lambda\,u(\lambda x,\lambda^2 t)$ at $\lambda=1$, and so measures the deviation of $u$ from backward self-similarity. In fact,~\eqref{eq:Ds:bound:xt}   is exactly an assumption on the self-similar time derivative of the self-similar velocity. Indeed, upon defining
\[
U(y,s) := \sqrt{-t}\, u(x,t), \qquad \mbox{with} \qquad x = \sqrt{-t}\,y,\quad t = -e^{-s},
\]
we have that
\begin{equation}
\label{eq:dsU:xt}
\partial_s U(y,s) = \sqrt{-t}\,\bigl((-t)\partial_t u - \tfrac12 u - \tfrac12 (x\cdot\nabla)u\bigr)(x,t).
\end{equation}
Since $\{|x|<1\}$ corresponds to $\{|y|<e^{s/2}\}$, condition~\eqref{eq:Ds:bound:xt} at $\bar t = -e^{-\bar s}$ is precisely
\begin{equation}
\label{eq:Ds:bound}
\|\partial_sU(\cdot,\bar s)\|_{L^{\infty}(B_{e^{\bar s/2}})} \leq \delta_0 ,
\qquad 
\mbox{for some} \qquad \bar s\geq s_0.
\end{equation}
In the proof of Theorem~\ref{thm:local:theorem} we work with the restatement~\eqref{eq:Ds:bound} of assumption~\eqref{eq:Ds:bound:xt}.
\end{remark}

\begin{remark}[A weaker form of \emph{approximate self-similarity}]
\label{rem:weak:norm}
The $L^\infty$-based assumption in~\eqref{eq:Ds:bound} enters the proof only once, in estimate~\eqref{eq:goodie:3}. The entire proof of Theorem~\ref{thm:local:theorem} goes through without change if~\eqref{eq:Ds:bound} is replaced by the weaker assumption
\[
\int_{B_{e^{\bar s/2}}}|\partial_sU(y,\bar s)|\, (1+|y|)\,e^{-\frac{1}{8}|y|^2}dy \les \delta_0 ,
\]
for some universal implicit constant in the $\les$ symbol.
\end{remark}

The proof of Theorem~\ref{thm:local:theorem} localizes the weighted Bernoulli-function estimates of Sections~\ref{sec:Bernoulli:original}--\ref{sec:alpha:small} by using the finite radius principal eigenfunction from Lemma~\ref{lem:lambda:conv}; this converts hypothesis~\eqref{eq:Ds:bound} into smallness of the local enstrophy of the profile at the given time, which is then propagated to all later times (Lemma~\ref{lem:local:propagation}) and fed into the Caffarelli-Kohn-Nirenberg $\eps$-regularity criterion~\cite{CKN82}.

\subsection{Organization of the paper}
Sections~\ref{sec:pressure:and:derivative:bounds}--\ref{sec:Bernoulli:original} gather basic properties of RSS solutions. The Liouville-type results for RSS solutions claimed in Theorem~\ref{thm:main} are proven in Section~\ref{sec:alpha:small} ($|\alpha|\ll 1$) and Section~\ref{sec:alpha:large} ($|\alpha|\gg1$). The Liouville-type results for RDSS (and hence also DSS) solutions claimed in Theorem~\ref{thm:main:RDSS} are analyzed in Section~\ref{sec:DSS:RDSS} ($|\alpha|\ll 1$) and Section~\ref{sec:DSS:RDSS:Ben} ($|\alpha|\gg1$). Finally, in Section~\ref{sec:local} we prove the local regularity criterion of Theorem~\ref{thm:local:theorem}.
 
\section{Estimates for the pressure and higher derivatives of the velocity}
\label{sec:pressure:and:derivative:bounds}
We work within the class of solutions to 3D Navier-Stokes which are rotated backward self-similar, meaning that ansatz~\eqref{eq:RSS} holds, and which satisfy the Type~I bound~\eqref{eq:type:I}, which is equivalent to the scaling-critical upper bound~\eqref{eq:decay}. By repeating the arguments in~\cite{NRS96,Tsai98}, we show that~\eqref{eq:type:I} implies Type~I estimates for higher space derivatives of the velocity, and estimates for the pressure. Importantly, \emph{all these bounds are independent of $\alpha$}. We record:
\begin{lemma}\label{lem:gradient}
Let $u$ be a solution of the 3D incompressible Navier-Stokes equations~\eqref{eq:NSE} on $\RR^3 \times [-1,0)$, which satisfies the Type~I upper bound~\eqref{eq:type:I} for some $C_{U,0}>0$. Assume that there exists $U\in C^2(\RR^3)$ and $\alpha\in \RR$ such that $u$ is given by the  ansatz~\eqref{eq:RSS}. Then $U\in C^\infty(\RR^3)$, and there exist \emph{$\alpha$-independent} constants $C_{U,1} = C_{U,1}(C_{U,0})>0$ and $C_{U,2} = C_{U,2}(C_{U,0})>0$ such that
\begin{equation}\label{eq:decay:grad}
|\nabla U(y)|\le \frac{C_{U,1}}{1+|y|^{2}}, \qquad  
|\nabla^2 U(y)|\le \frac{C_{U,2}}{1+|y|^{3}},
\end{equation}
for all $y\in\RR^3$.
Moreover, for any $\sigma \in (0,1)$, there exists an \emph{$\alpha$-independent} constant $C_{P,0} = C_{P,0}(C_{U,0},\sigma) > 0$ such that 
\begin{equation}\label{eq:decay:pressure}
|P(y)|\le \frac{C_{P,0}}{1+|y|^{2-\sigma}},  
\end{equation}
for all $y\in\RR^3$.
\end{lemma}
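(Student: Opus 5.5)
The plan is to work in the physical variables $(x,t)$, where the rotation parameter $\alpha$ disappears. We then transfer the resulting bounds to the profile through the ansatz~\eqref{eq:RSS}. Since $R(\alpha s)$ is an isometry, we have $|\nabla^k u(x,t)| = (-t)^{-(1+k)/2}|\nabla^k U(y)|$ with $|y| = |x|/\sqrt{-t}$. Hence every bound of the form $|\nabla^k u(x,t)|\les (|x|+\sqrt{-t})^{-1-k}$ with constants depending only on $C_{U,0}$ yields~\eqref{eq:decay:grad}, and those constants are automatically $\alpha$-independent.

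The first step is the pressure. Since $u$ decays like $|x|^{-1}$ at each fixed time, we have $u\otimes u\in L^{p}$ for every $p>3/2$. This lets us normalize $P$ (equivalently $p$) as the Riesz-transform solution $p=R_iR_j(u_iu_j)$, and one checks that this is the pressure compatible with the profile equation~\eqref{eq:profile}, up to a function of time that we fix by decay. To obtain~\eqref{eq:decay:pressure}, I would use the profile directly: $P=R_iR_j(U_iU_j)$ with $|U_iU_j|\le C_{U,0}^2(1+|y|)^{-2}$. Split the Calderón-Zygmund integral into the region near $y$ and the region far from $y$. The singular near part is handled by the Hölder regularity of $U\otimes U$ on unit balls. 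The far part is bounded by $\int |U|^2|y-z|^{-3}$, which is of size $(1+|y|)^{-2}\log(2+|y|)$. This gives $(1+|y|)^{-2+\sigma}$ for any $\sigma>0$, losing only the logarithm. The local Hölder bound on $U\otimes U$ is circular until gradients are controlled. To avoid this, I would first bound $P$ in $L^q(B_1(y))$ for large $q$, using CZ boundedness together with the decomposition above. That estimate suffices for the next step, and pointwise bounds follow afterwards.

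The second step is the higher derivatives, via local parabolic regularity at scale $r=\tfrac12(|x|+\sqrt{-t})$. On the cylinder $Q_r(x,t)$ we have $|u|\les C_{U,0}/r$; this requires checking that the cylinder stays inside the region where the Type~I bound holds and that the cylinder's time interval stays before $0$, which holds since $r^2\le -t$ up to constants. Rescale $Q_r$ to a unit cylinder $Q_1$. There the rescaled $u$ is bounded by $C_{U,0}$, and the rescaled pressure is controlled in $L^q$ by the first step, after subtracting its spatial mean, which is harmless. Standard interior estimates for Navier-Stokes with bounded velocity (Serrin-type bootstrapping: Stokes estimates for $\p_t v-\Delta v=-\nabla\cdot(v\otimes v)-\nabla\pi$, then iteration) give $|\nabla^k v|\le C_k(C_{U,0})$ on $Q_{1/2}$ for all $k$. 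Scaling back gives $|\nabla^k u|\les r^{-1-k}$. In particular $U\in C^\infty$, and the cases $k=1,2$ give~\eqref{eq:decay:grad}.

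The main obstacle is the pressure. Interior regularity requires the pressure with no more than a spatial constant subtracted, and the decay rate of $P$ itself must be sharp up to $|y|^{\sigma}$. Once gradient bounds hold, I would redo the pointwise CZ estimate with $\nabla(U\otimes U)$ bounded by $(1+|y|)^{-3}$ to settle the near-field contribution cleanly. The resulting constant blows up as $\sigma\to0$ because of the logarithm, which is consistent with the dependence $C_{P,0}(C_{U,0},\sigma)$ in the statement. No step uses~\eqref{eq:profile:a}'s rotation term, which is why all constants are independent of $\alpha$.
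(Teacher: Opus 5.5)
Your architecture matches the paper's second argument. You work in $(x,t)$, where $\alpha$ is absent, and rescale to a unit cylinder at scale $r\sim|x|+\sqrt{-t}$ on which $|\tilde u|\le C_{U,0}$. You then apply interior regularity, transfer back through the isometry $R(\alpha s)$, and only at the end deduce \eqref{eq:decay:pressure} from $P=R_iR_j(U^iU^j)$ and the gradient bounds.

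\textbf{The gap: the pressure bound you feed into the interior estimate is not uniform in $y$.}
\begin{itemize}
\item Your first step gives $\|P\|_{L^q(B_1(y))}\lesssim(1+|y|)^{-2}\log(2+|y|)$ on \emph{unit} balls in self-similar variables.
\item The cylinder you need has self-similar radius $\rho=\tfrac12(1+|y|)$. At $\tilde t=0$ the rescaled pressure is $\tilde p(\tilde x,0)=\rho^2P\bigl(y+\rho R(-\alpha s)\tilde x\bigr)$.
\item Covering $B_\rho(y)$ by $O(\rho^3)$ unit balls then gives only $\|\tilde p(\cdot,0)\|_{L^q(B_1)}\lesssim\log\rho$.
\item Subtracting the spatial mean does not help, since you only have an upper bound for $|P|$.
\end{itemize}
So the constants in your interior estimate grow with $|y|$. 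You would obtain $|\nabla U(y)|\lesssim(1+|y|)^{-2}\log^N(2+|y|)$ rather than \eqref{eq:decay:grad}. Redoing the pressure afterwards with these lossy gradient bounds reproduces the logarithm instead of removing it. Thus ``that estimate suffices for the next step'' is false as stated.

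\textbf{Two repairs.}
\begin{itemize}
\item \emph{The paper's route.} Your premise that interior regularity needs the pressure is mistaken. For a smooth solution with $\|\tilde u\|_{L^\infty(Q_1)}\le C_{U,0}$, Serrin's interior argument bounds all spatial derivatives at $(0,0)$ by constants depending only on $C_{U,0}$. That argument works with the vorticity, so the pressure never enters, and the paper postpones the pressure entirely to the final step.
\item \emph{Keeping pressure-dependent estimates.} Do the Calder\'on--Zygmund split in the rescaled frame, not at unit self-similar scale.
  \begin{itemize}
  \item Near part: on $B_{3/2}$ one has $|x'|+\sqrt{-t'}\ge r/2$, hence $|\tilde u\otimes\tilde u|\le 4C_{U,0}^2$ there. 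The $L^q$-boundedness of $R_iR_j$ then controls the near part.
  \item Far part: on $B_1$ it is bounded pointwise by $C\,C_{U,0}^2\int_{|\tilde z|\ge 3/2}|\tilde x-\tilde z|^{-3}|\tilde z-\tilde x_0|^{-2}\,d\tilde z$, where $\tilde x_0=-x/r$ and $|\tilde x_0|<2$. This integral is finite uniformly in $y$.
  \end{itemize}
  No logarithm appears, because the region that produced it now lies inside the near part.
\end{itemize}

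\textbf{A smaller point.} Your justification that the cylinder stays in the time range where \eqref{eq:type:I} holds (``$r^2\le -t$ up to constants'') fails when $|x|\gg\sqrt{-t}$. Either extend $u$ backward in time via the ansatz, as the paper does, or, for given $y$, choose $t$ close enough to $0$ that $t-r^2\ge -1$.
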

This result is essentially known, and follows mutatis mutandis from~\cite{NRS96,Tsai98,Lemarie02,Tsai18}. We repeat here a sketch of the proof to emphasize the $\alpha$-independence of the constants $C_{U,1}, C_{U,2}$, and $C_{P,0}$. Note that at time $t=-1$ we have $s=0$ and $R(0) = {\rm Id}$; thus, the initial data $u_0 = u|_{t=-1}$ does not know about the rotation parameter $\alpha$. The Navier-Stokes equations~\eqref{eq:NSE} also do not contain an $\alpha$-dependent term. Thus, it is convenient to prove parts of Lemma~\ref{lem:gradient} in original $(x,t)$ variables.

\begin{proof}[Proof of Lemma~\ref{lem:gradient}]
The bound~\eqref{eq:decay} implies $U \in L^\infty_{\rm loc}(\RR^3) \cap L^q(\RR^3)$ for $q>3$. 
Equation~\eqref{eq:profile:a} may be written as $-\Delta U  + \nabla P = \nabla \cdot F$, $\nabla \cdot U = 0$ with $F \in L^{\infty}_{\rm loc}$. The fact that $U,P \in C^\infty(\mathbb{R}^3)$ follows from standard interior estimates for the Stokes system~\cite[Lemma 2.12]{Tsai18}, differentiating the equation, and iterating. The bounds such obtained are qualitative in nature, as they may a-priori depend on $\alpha$.

Note that $\nabla \cdot  (JU - (Jy\cdot\nabla)U ) =0$ and $\nabla \cdot (U + (y\cdot \nabla U)) = 0$ when $\nabla \cdot U = 0$, and hence $-\Delta P =  \partial_i \partial_j (U^i U^j)$.  From~\cite[Lemma 2.1]{Tsai18} (see also~\cite[Lemma 3.1]{NRS96}) it follows that the pressure $P$ in \eqref{eq:profile:a} is given by $P = R_i R_j (U^i U^j) \; +$ a constant, where $R_i = \partial_i (-\Delta)^{-1/2}$ are Riesz transforms, and we have used the summation convention on repeated indices. Without loss of generality, we take this constant term to vanish, so that $P = R_i R_j (U^i U^j)$; this directly implies $P \in L^{q/2}(\mathbb{R}^3)$ for $q>3$ with an upper bound that only depends on $q$ and $C_{U,0}^2$ (and is hence independent of $\alpha$), by the boundedness of Calderon-Zygmund operators on (non-endpoint) Lebesgue spaces.

The bounds~\eqref{eq:decay:grad} follow as in~\cite{NRS96,Tsai98} by rescaling to an appropriate parabolic cylinder in original $(x,t)$ variables, using the fact $(U,P) \in L^q(\RR^3) \times L^{q/2}(\RR^3)$ implies the smallness of the mean of $|u|^3 + |p|^{3/2}$ if the center of this cylinder is far enough, and then applying the Caffarelli-Kohn-Nirenberg~\cite{CKN82} partial regularity theory (see~\cite{Kukavica09} for a compact summary).

An alternative argument is as follows.
Fix $y\in\RR^3$ arbitrary and set $r=\frac12\max\{|y|,1\}$. Then the function\footnote{Note that although $u$ is a priori a solution of Navier-Stokes only on $\RR^3\times [-1,0)$, the ansatz~\eqref{eq:RSS} allows us to extend it backwards-in-time as a solution of Navier-Stokes on $\RR^3\times [-T,0)$ for any $T\geq 1$. In particular, setting $T = 1 + r^2$ makes $\tilde u$ well defined on $Q_1$.} $\tilde u(\tilde x, \tilde t):=r \,u(y+r \tilde x,\,-1+r^2 \tilde t)$ solves~\eqref{eq:NSE} on the unit parabolic cylinder $Q_1:=B_1\times(-1,0\,]$.
Moreover, the bound~\eqref{eq:type:I} implies that $\|\tilde u\|_{L^\infty(Q_1)}\le C_{U,0}$, a bound which is independent of $y$ and of $\alpha$. By interior regularity for the Navier-Stokes system,\footnote{\label{foot:int:reg}One can follow the proof in the classical paper of Serrin~\cite{Serrin62} and make it quantitative, resulting in a polynomial dependence of $C_1$ and $C_2$ on $C_{U,0}$. Note that no pressure bounds appear in this interior estimate. Equivalently, one may iterate interior regularity for the non-stationary Stokes system, see e.g.~\cite[Lemma A.2]{ChenStrainTsaiYau08}.} there exist  constants $C_1, C_2 >0$ which \emph{depend only on $C_{U,0}$}, such that $|(\nabla_{\tilde x} \tilde u)(0,0)|\le C_1$ and $|(\nabla^2_{\tilde x} \tilde u)(0,0)|\le C_2$. We undo the definition of $\tilde u$ to deduce that $| (\nabla_x u)(y,-1) | \leq C_1 r^{-2} $ and $|(\nabla^2_x u)(y,-1)| \leq C_2 r^{-3} $. Since at $t=-1$ we have $s=0$ and $R(0) = {\rm Id}$, we may then use~\eqref{eq:RSS} to equate 
$(\nabla U)(y)=(\nabla_x u)(y,-1)$ and $\nabla^2 U(y) = (\nabla_x^2 u)(y,-1)$. The bounds~\eqref{eq:decay:grad} now follow by the definition of $r$, with $C_{U,1} = 8 C_1$ and $C_{U,2} = 16 C_2$, two constants that depend only on $C_{U,0}$ and are thus independent of $\alpha$.

The bound~\eqref{eq:decay:pressure} follows from the singular integral kernel representation of $P = R_i R_j (U^i U^j)$, and the previously established bounds for $U$ and $\nabla U$. We have that $C_{P,0}$ depends only on $\sigma, C_{U,0}$, and $C_{U,1}$, and is thus independent of $\alpha$.
\end{proof}

\section{Small enstrophy implies zero velocity}
\label{sec:vorticity}
We reduce the proof of Theorem~\ref{thm:main} to verifying that there exists a large enough radius $\bar R>0$, such that if $\| \Omega \|_{L^2(B_{\bar R}(0))}$ is sufficiently small (independently of $\alpha$), then $U\equiv 0$. This observation is summarized as:
\begin{proposition}
\label{prop:local:vort}
Let $(U,P)$ be a smooth solution of~\eqref{eq:profile} with associated vorticity $\Omega = \nabla \times U$. Let $\bar R := \sqrt{8 C_{U,1}}>0$. 
There exists a \emph{universal} constant $C_{\Omega}   >0$  such that 
\begin{equation}
\label{eq:vort:est}
\|\Omega\|_{L^2(\RR^3)}^2+\|\nabla\Omega\|_{L^2(\RR^3)}^2
\leq C_{\Omega} \|\Omega\|_{L^2(B_{\bar R})} 
\bigl(\|\Omega\|^2_{L^2(\RR^3)}+\|\nabla \Omega\|^2_{L^2(\RR^3)}\bigr) .
\end{equation}
In particular, if 
\begin{equation}
\label{eq:small:enstrophy}
\|\Omega\|_{L^2(B_{\bar R})} < C_{\Omega}^{-1},
\end{equation} 
then $U\equiv 0$.
\end{proposition}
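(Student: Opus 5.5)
We derive the vorticity equation for $\Omega=\nabla\times U$ from~\eqref{eq:profile:a}. A direct computation gives
\[
\alpha\big(J\Omega-(Jy\cdot\nabla)\Omega\big)+\Omega+\tfrac12(y\cdot\nabla)\Omega-\Delta\Omega+(U\cdot\nabla)\Omega-(\Omega\cdot\nabla)U=0 .
\]
The curl of $\frac12 U+\frac12 y\cdot\nabla U$ is $\Omega+\frac12 y\cdot\nabla\Omega$. The rotation operator commutes with the curl because $J$ generates rotations.

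We then test this equation against $\Omega$ over $\RR^3$. The decay from Lemma~\ref{lem:gradient} gives $|\Omega|\lesssim (1+|y|)^{-2}$ and $|\nabla\Omega|\lesssim(1+|y|)^{-3}$. So $\Omega\in L^2$ and $\nabla\Omega\in L^2$, and the boundary terms in the integrations by parts vanish (checked on balls $B_R$ with $R\to\infty$; the $y\cdot\nabla$ flux is $O(R^{3}R^{-4})\to0$).

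Next we compute each term in the energy identity.
\begin{itemize}[leftmargin=*]
\item The rotation terms drop out. $J\Omega\cdot\Omega=0$ since $J$ is antisymmetric. The term $(Jy\cdot\nabla)\Omega\cdot\Omega$ integrates to zero because $\nabla\cdot(Jy)=0$.
\item The transport term $(U\cdot\nabla)\Omega\cdot\Omega$ integrates to zero since $\nabla\cdot U=0$.
\item The scaling terms give $\int|\Omega|^2+\frac12\int y\cdot\nabla\frac{|\Omega|^2}{2} = (1-\frac34)\|\Omega\|_2^2=\frac14\|\Omega\|_2^2$.
\item The Laplacian gives $\|\nabla\Omega\|_2^2$.
\end{itemize}
This yields
\[
\tfrac14\|\Omega\|_{L^2}^2+\|\nabla\Omega\|_{L^2}^2=\int(\Omega\cdot\nabla)U\cdot\Omega\,dy .
\]

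We split the stretching integral into $B_{\bar R}$ and $B_{\bar R}^c$. Outside the ball, Lemma~\ref{lem:gradient} gives $|\nabla U|\le C_{U,1}/(1+\bar R^2)\le 1/8$ by the choice $\bar R^2=8C_{U,1}$. That piece is therefore at most $\frac18\|\Omega\|_2^2$ and is absorbed into the left side, leaving $\frac18\|\Omega\|_2^2$.

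Inside the ball we must avoid $\alpha$- or $C_{U,1}$-dependent constants, since $C_\Omega$ is claimed universal. We write the stretching term as $\int_{B_{\bar R}}\Omega\otimes\Omega:\nabla U$ and bound it by
\[
\|\Omega\|_{L^2(B_{\bar R})}\,\|\Omega\|_{L^3}\,\|\nabla U\|_{L^6}.
\]
By the Biot--Savart law (Calderón--Zygmund) and Sobolev, $\|\nabla U\|_{L^6}\lesssim\|\Omega\|_{L^6}\lesssim\|\nabla\Omega\|_{L^2}$. By interpolation and Young, $\|\Omega\|_{L^3}\lesssim\|\Omega\|_2^{1/2}\|\nabla\Omega\|_2^{1/2}\lesssim \|\Omega\|_2+\|\nabla\Omega\|_2$. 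Hence the inner piece is at most $C\|\Omega\|_{L^2(B_{\bar R})}(\|\Omega\|_2^2+\|\nabla\Omega\|_2^2)$ with $C$ universal. Multiplying through by $8$ gives~\eqref{eq:vort:est}.

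If~\eqref{eq:small:enstrophy} holds, then $\|\Omega\|_2=\|\nabla\Omega\|_2=0$, so $\Omega\equiv0$. Then $U$ is divergence-free and curl-free, hence harmonic, and it decays by~\eqref{eq:decay}. Liouville's theorem gives $U\equiv0$.

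The main obstacle is the justification of the integrations by parts and the Biot--Savart step. One must check that $U$ is recovered from $\Omega$ by Biot--Savart, which uses the decay of $U$ to exclude harmonic corrections. One must also check that the boundary fluxes vanish, including the pressure-free vorticity equation and the $\alpha$-terms whose coefficient $Jy$ grows linearly. The decay rates from Lemma~\ref{lem:gradient} suffice for both.
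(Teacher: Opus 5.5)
Your proposal is correct and follows the paper's proof essentially step by step. You use the same energy identity $\tfrac14\|\Omega\|_{L^2}^2+\|\nabla\Omega\|_{L^2}^2=\int(\Omega\cdot\nabla)U\cdot\Omega\,dy$, the same absorption outside $B_{\bar R}$ via $|\nabla U|\le \tfrac18$, and the same harmonic/Liouville ending. The only difference is cosmetic: on the inner piece you split by H\"older as $L^2\cdot L^3\cdot L^6$, whereas the paper uses $L^2\cdot L^4\cdot L^4$. Both splits lead to the same universal bound $\lesssim \|\Omega\|_{L^2(B_{\bar R})}\|\Omega\|_{L^2}^{1/2}\|\nabla\Omega\|_{L^2}^{3/2}$.
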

Note that both $C_{\Omega}$ and $\bar R$ are independent of $\alpha$. Also, note that~\eqref{eq:small:enstrophy} automatically holds if $6 C_{U,1} < C_{\Omega}^{-4/7}$, in light of~\eqref{eq:decay:grad}; this rules out the existence of nontrivial \emph{small solutions}  of~\eqref{eq:profile}.
Lastly, note that the smallness condition~\eqref{eq:small:enstrophy} may be re-stated in original $(x,t)$ variables as follows. Undoing the rescaling~\eqref{eq:RSS} gives $\omega(x,t) = (-t)^{-1} R(\alpha s) \Omega (y)$ with $y = R(-\alpha s)x/\sqrt{-t}$, so that $|\omega(x,t)| = (-t)^{-1}|\Omega(y)|$. The change of variables $x = \sqrt{-t} R(\alpha s) y$ then yields $\|\Omega\|_{L^2(B_{\bar R})}^2
= (-t)^{1/2} \int_{|x|\le \bar R\sqrt{-t}} |\omega(x,t)|^2  dx$, for all $t\in(-1,0)$. The right-hand side is invariant under the Navier-Stokes scaling $u\mapsto \lambda\,u(\lambda\,\cdot\,,\lambda^2\,\cdot\,)$, and its smallness over the parabolic balls $\{|x|\le\bar R\sqrt{-t}\}$ shrinking to the origin as $t \to 0^-$ is \textit{reminiscent of} the scale-invariant hypotheses under which the partial regularity theory of Caffarelli, Kohn and Nirenberg~\cite{CKN82} implies that $(0,0)$ is a \emph{regular} point of $u$.

\begin{proof}[Proof of Proposition~\ref{prop:local:vort}]
Taking the curl of~\eqref{eq:profile:a}, using the fact that when $\nabla \cdot U=0$ we have $\nabla \times (JU) = - \partial_3 U$, $[\nabla \times, J y]\cdot \nabla U = - \nabla U_3$, and $J \Omega = \nabla U_3 - \partial_3 U$, we deduce
\begin{equation}
\label{eq:vort}
\alpha \bigl( J\Omega-  Jy\cdot\nabla\Omega\bigr) + \Omega + \tfrac{1}{2}y\cdot\nabla\Omega -\Delta\Omega +U\cdot\nabla \Omega=\Omega\cdot\nabla U.   
\end{equation}
By~\eqref{eq:decay:grad} we have that $|\Omega(y)|\leq \frac{2 C_{U,1}}{1+|y|^2}$, and thus $\Omega \in L^2(\RR^3)$. We may thus take the dot product of~\eqref{eq:vort} with $\Omega$ and integrate over $\RR^3$, use the fact that $\nabla \cdot (J y) = 0$ and $J \Omega \cdot \Omega =0 $,  to obtain\footnote{In two space dimensions the vortex stretching term is absent from~\eqref{eq:vort}, and so we immediately deduce $\frac{1}{2} \| \Omega\|_{L^2(\RR^2)}^2 +\| \nabla \Omega\|_{L^2(\RR^2)}^2 = 0$, without any further assumption on $|\alpha|$ as being small or large.}
\[
\tfrac{1}{4} \| \Omega\|_{L^2(\RR^3)}^2 +\| \nabla \Omega\|_{L^2(\RR^3)}^2 = \int_{\RR^3} \Omega^i \partial_i U^j \Omega^j \, dy.
\]
By~\eqref{eq:decay:grad}, when $|y|\geq \bar R = \sqrt{8 C_{U,1}}$, we have $|\nabla U(y)|\leq 1/8$. We  obtain from the above identity that 
\begin{align}
\tfrac{1}{8} \| \Omega\|_{L^2(\RR^3)}^2 +\| \nabla \Omega\|_{L^2(\RR^3)}^2 
&\leq \int_{|y| < \bar R} |\Omega|^2 \, |\nabla U|\, dy
\notag\\
& \leq \|\Omega\|_{L^2(B_{\bar R})} \|\Omega\|_{L^4(\RR^3)} \|\nabla U\|_{L^4(\RR^3)}
\notag\\
&\leq C^\prime \|\Omega\|_{L^2(B_{\bar R})} \|\Omega\|_{L^4(\RR^3)}^2
\notag \\
&\leq C^\prime \|\Omega\|_{L^2(B_{\bar R})} \|\Omega\|_{L^2(\RR^3)}^{1/2} \|\Omega\|_{L^6(\RR^3)}^{3/2}
\notag\\
&\leq C^\prime (C^{\prime\prime})^{3/2} \|\Omega\|_{L^2(B_{\bar R})} \|\Omega\|_{L^2(\RR^3)}^{1/2} \|\nabla \Omega\|_{L^2(\RR^3)}^{3/2}
\end{align}
where $C^\prime>0$ (the operator norm of $\nabla \nabla \times (-\Delta)^{-1} \colon L^4(\RR^3) \to L^4(\RR^3)$) and $C^{\prime\prime} >0$ (the constant appearing in the Gagliardo-Nirenberg-Sobolev inequality associated to $\dot{H}^1(\RR^3) \subset L^6(\RR^3)$) are universal constants. The bound~\eqref{eq:vort:est} now directly follows with $C_{\Omega} = 6 C^\prime (C^{\prime\prime})^{3/2}$.

Finally, if $\Omega \equiv 0$, then since $\nabla \cdot U = 0$ we have that $U$ is a harmonic function which vanishes at infinity by \eqref{eq:decay}; thus $U\equiv 0$. 
\end{proof}

\section{The equation satisfied by the Bernoulli function}
\label{sec:Bernoulli:original}
As in~\cite{NRS96}, define the head-pressure, or \emph{Bernoulli function}, by
\begin{equation}
\label{eq:Bernoulli:def}
\Pi(y) 
:= P(y) + \tfrac 12 |U(y)|^2 + \tfrac 12 y \cdot U(y).
\end{equation}
Note that~\eqref{eq:decay} and \eqref{eq:decay:pressure} imply
\[
|\Pi(y)|\leq C_{P,0} + \tfrac 12 C_{U,0}^2 +  \tfrac 12   C_{U,0}
\]
for all $y\in \RR^3$.
Also, define 
\begin{equation}
L:=  -\Delta + (U(y) + \tfrac 12 y) \cdot \nabla
\label{eq:L:def}
\end{equation}
which is a non-normal elliptic operator without potential term.

While in the case $\alpha =0$ we know that $L \Pi = -|\Omega|^2$ (see~\eqref{eq:Bernoulli:ineq} above), as soon as $\alpha\neq 0$ this identity fails. Instead, for general $\alpha \in \RR$,  if $(U,P)$ solves~\eqref{eq:profile} then
\begin{align}
L\Pi + |\Omega|^2
&= -\Delta P - U \cdot \Delta U - |\nabla U|^2 - \tfrac 12 y \cdot \Delta U
+ (U+\tfrac 12  y) \cdot \nabla P + |\Omega|^2
\notag\\
&\quad + \bigl( (U +\tfrac 12  y) \cdot \nabla U \bigr) \cdot U
+ \tfrac 12 (U+\tfrac 12  y) \cdot U + \tfrac 12 \bigl(  (U+\tfrac 12  y) \cdot \nabla U \bigr)\cdot y
\notag\\
&= {\rm Tr} \bigl( (\nabla U)^2 \bigr) - (U +\tfrac 12 y) \cdot \Delta U - |\nabla U|^2 
+ |\Omega|^2 
\notag\\
&\quad
- (U+\tfrac 12  y) \cdot \bigl( \alpha J U - \alpha (J y \cdot \nabla)U + \tfrac 12 U   - \Delta U + (U + \tfrac 12 y) \cdot\nabla  U \bigr) 
\notag\\
&\quad + \bigl( (U +\tfrac 12  y) \cdot \nabla U \bigr) \cdot U
+ \tfrac 12 (U+\tfrac 12  y) \cdot U + \tfrac 12 \bigl(  (U+\tfrac 12  y) \cdot \nabla U \bigr)\cdot y
\notag\\
&= {\rm Tr} \bigl( (\nabla U)^2 \bigr) - |\nabla U|^2  + |\Omega|^2
- \alpha (U+\tfrac 12  y) \cdot \bigl(   J U -   (J y \cdot \nabla)U  \bigr) 
\notag\\
&= 
- \alpha (U+\tfrac 12  y) \cdot \bigl(   J U -   (J y \cdot \nabla)U  \bigr) 
\notag\\
&= \alpha E,
\label{eq:L:Pi:alpha}
\end{align}
where the \emph{error term} $E$ is defined as
\begin{equation}
E :=  
\tfrac{1}{2} (y_1 U_2 - y_2 U_1) +  \left(U(y) + \tfrac{1}{2} y \right) \cdot \left(y_1 \partial_2 U - y_2 \partial_1 U \right)
\,.
\label{eq:E:def}
\end{equation}
Note that when $\alpha=0$, identity~\eqref{eq:L:Pi:alpha} recovers~\eqref{eq:Bernoulli:ineq}. 
The main properties of the error term $E$ are:
 
\begin{lemma}
\label{lem:E:properties}
Let $U$ be divergence-free, and define $E$ via~\eqref{eq:E:def}. Then, we have that 
\begin{equation}
|E(y)| \leq C_E:= C_{U,0} + C_{U,1} + 2 C_{U,0} C_{U,1} ,
\label{eq:E:bnd}
\end{equation} 
for all $y \in \RR^3$, and also
\begin{equation}
E=\tfrac{1}{2}\partial_{\theta}(|U|^2+U\cdot y),
\label{eq:E:is:theta:derivative}
\end{equation}
where $\partial_\theta := y_1\partial_2 - y_2\partial_1$ denotes the angular derivative about the $y_3$-axis. 
\end{lemma}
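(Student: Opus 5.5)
The lemma has two parts: a pointwise bound \eqref{eq:E:bnd}, which I obtain by estimating each term of \eqref{eq:E:def} with Lemma~\ref{lem:gradient}, and the identity \eqref{eq:E:is:theta:derivative}, which I obtain by expanding $\partial_\theta$ of $|U|^2$ and of $U\cdot y$. Throughout I write $\partial_\theta = y_1\partial_2 - y_2\partial_1 = (Jy)\cdot\nabla$. This holds because $Jy = (-y_2, y_1, 0)$.

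For the bound, I treat the three pieces of $E$ separately. First, $|y_1U_2 - y_2U_1| \le |y|\,|U| \le C_{U,0}$ by \eqref{eq:decay}, since $|y|/(1+|y|)\le 1$. This gives the term $\tfrac12 C_{U,0}\le C_{U,0}$. Second, $|y_1\partial_2 U - y_2\partial_1 U| \le |y|\,|\nabla U| \le C_{U,1}|y|/(1+|y|^2)$ by \eqref{eq:decay:grad}. I then pair this with the two factors in front:
\begin{itemize}
\item $|U|\cdot |y||\nabla U| \le C_{U,0}C_{U,1}$;
\item $\tfrac12|y|\cdot|y||\nabla U| \le \tfrac12 C_{U,1}|y|^2/(1+|y|^2) \le C_{U,1}$.
\end{itemize}
Summing these gives a bound below $C_{U,0}+C_{U,1}+2C_{U,0}C_{U,1}$, so \eqref{eq:E:bnd} holds with room to spare.

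For the identity, I compute directly:
\[
\tfrac12\partial_\theta|U|^2 = U\cdot\partial_\theta U,
\qquad
\tfrac12\partial_\theta(U\cdot y) = \tfrac12 y\cdot\partial_\theta U + \tfrac12 U\cdot\partial_\theta y .
\]
Since $\partial_\theta y = (-y_2, y_1, 0)$, the last term equals $\tfrac12(y_2U_2\cdot 0 + \ldots) = \tfrac12(y_1U_2 - y_2U_1)$. Adding the pieces reproduces \eqref{eq:E:def} exactly, which proves \eqref{eq:E:is:theta:derivative}.

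Neither step needs the divergence-free hypothesis or the PDE. I also cross-check the identity against the derivation in \eqref{eq:L:Pi:alpha}: $-(U+\tfrac12y)\cdot(JU - (Jy\cdot\nabla)U)$, using $U\cdot JU=0$ and $y\cdot JU = -(y_1U_2-y_2U_1)$, does give $\tfrac12(y_1U_2-y_2U_1)+(U+\tfrac12y)\cdot\partial_\theta U = E$. So the definition of $E$ is consistent with the sign of $\alpha$ there.

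I do not expect a real obstacle. The only point needing care is the sign and orientation of $\partial_\theta y$.
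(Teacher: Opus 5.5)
Your proposal is correct and is essentially the paper's proof: you bound each term of \eqref{eq:E:def} using \eqref{eq:decay}--\eqref{eq:decay:grad}, and you verify \eqref{eq:E:is:theta:derivative} by expanding $\tfrac12\partial_\theta|U|^2$ and $\tfrac12\partial_\theta(U\cdot y)$ with $\partial_\theta y=(-y_2,y_1,0)$. The intermediate expression ``$\tfrac12(y_2U_2\cdot 0+\ldots)$'' is garbled and should simply read $\tfrac12\, U\cdot\partial_\theta y=\tfrac12(y_1U_2-y_2U_1)$; your conclusion is nonetheless right, and so is your remark that the divergence-free hypothesis plays no role.
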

The observation that $E$ is a pure $\p_\theta$ derivative~\eqref{eq:E:is:theta:derivative}  is crucial for dealing with the large $\alpha$ regime below.

\begin{proof}[Proof of Lemma~\ref{lem:E:properties}]
The bound~\eqref{eq:decay} and \eqref{eq:decay:grad} imply 
\[
|E(y)| \leq C_{U,0} + C_{U,1} + \tfrac{2}{1+|y|^2} C_{U,0} C_{U,1}.
\]
The bound~\eqref{eq:E:bnd} then follows.
Next, a direct computation gives
\[
\tfrac{1}{2}\partial_\theta(|U|^2) 
= U\cdot(y_1\partial_2 U - y_2\partial_1 U),
\]
and
\begin{align*}
\tfrac{1}{2}\partial_\theta(U\cdot y) 
&= \tfrac{1}{2}\bigl((y_1\partial_2 U - y_2\partial_1 U)\cdot y + U\cdot(y_1 e_2 - y_2 e_1)\bigr)
\notag\\
&= \tfrac{1}{2}(y_1 U_2 - y_2 U_1) + \tfrac{1}{2}\,y\cdot(y_1\partial_2 U - y_2\partial_1 U).
\end{align*}
Adding these two identities yields
\[
\tfrac{1}{2}\partial_\theta(|U|^2 + U\cdot y) 
= \tfrac{1}{2}(y_1 U_2 - y_2 U_1) + \bigl(U + \tfrac{y}{2}\bigr)\cdot(y_1\partial_2 U - y_2\partial_1 U) = E,
\]
which is the identity claimed in~\eqref{eq:E:is:theta:derivative}.
\end{proof}

\section{RSS: proof of the main result for $\alpha$ small}
\label{sec:alpha:small}
The goal is to prove that if $|\alpha|$ is small enough, then~\eqref{eq:small:enstrophy} holds. We note that the arguments given in this section are new even for $\alpha=0$.

The main technical ingredient is the following observation concerning the kernel of the operator 
\begin{equation}
L^* := - \Delta - (U(y) + \tfrac 12 y) \cdot \nabla - \tfrac 32
\label{eq:L*:def}
\end{equation} 
which is the $L^2$ adjoint of the operator $L$ introduced earlier in~\eqref{eq:L:def}.

\begin{proposition}
\label{prop:weight:construction}
Let $U$ be smooth, divergence-free, which satisfies the bounds~\eqref{eq:decay} and~\eqref{eq:decay:grad}. Given  $\eps \in (0,1)$  there exist constants $m = m(\eps, C_{U,0})>0$ and $M = M(\eps, C_{U,0})>0$, and a $C^2$ smooth strictly positive function $w \colon \RR^3 \to \RR_+$ satisfying 
\begin{equation}
 L^* w = 0
 \label{eq:weight:kernel}
\end{equation}
and  
\begin{equation}
m \, e^{-\frac{1+\eps}{4} |y|^2} 
\leq w(y) \leq 
M \, e^{-\frac{1-\eps}{4} |y|^2}, 
\qquad \mbox{for all} \qquad y \in \RR^3.
\label{eq:weight:decay}
\end{equation}
\end{proposition}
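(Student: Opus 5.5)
The plan is to realize $w$ as the density of the invariant measure of the diffusion generated by $-L$. Then compare it with explicit Gaussian barriers outside a large ball.

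\emph{Structure of $L^*$.} Since $\nabla\cdot U=0$, we have $L^*w=-\nabla\cdot\bigl(\nabla w+(U+\tfrac12 y)w\bigr)$. So $-L^*$ is the stationary Fokker--Planck operator of the diffusion with generator $-L=\Delta-(U+\tfrac12 y)\cdot\nabla$. For $U\equiv0$ the Gaussian $g(y):=e^{-|y|^2/4}$ solves $L^*g=0$, and we treat $U$ as a bounded perturbation. Writing $w=g\,v$, a direct computation gives
\[
g^{-1}L^*(gv)=-\Delta v+\bigl(\tfrac12 y-U\bigr)\cdot\nabla v+\tfrac12 (U\cdot y)\,v .
\]
This is an Ornstein--Uhlenbeck operator plus a perturbation. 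Both $U$ and the potential $\tfrac12 U\cdot y$ are bounded, by $C_{U,0}$ and $\tfrac12 C_{U,0}$ respectively, using~\eqref{eq:decay}.

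\emph{Existence and positivity.} I would work in the Gaussian-weighted space $L^2(\RR^3;e^{-|y|^2/4}dy)$. There $-\Delta+\tfrac12 y\cdot\nabla$ is self-adjoint with compact resolvent, its bottom eigenvalue $0$ having constant eigenfunctions. The drift $U\cdot\nabla$ is relatively compact, since $U$ is bounded, and $U\to0$ at infinity by~\eqref{eq:decay}. Hence $L$ has compact resolvent in this space. Moreover $L1=0$ with $1$ positive. By the Krein--Rutman theorem (positivity of the semigroup, irreducibility from the strong maximum principle), $0$ is the simple principal eigenvalue of $L$. The same then holds for the adjoint, $L^*$ relative to the dual weighting. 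Thus the principal eigenfunction $w$ of $L^*$ satisfies $L^*w=0$ and $w>0$, and $w$ lies in the dual weighted space $L^2(e^{|y|^2/4}dy)$. Elliptic regularity, using the smoothness of $U$, gives $w\in C^2$.

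An equivalent route, if one prefers to avoid functional-analytic formalism, is a Khasminskii/Krylov--Bogoliubov argument. There the Lyapunov function $|y|^2$ satisfies $-L|y|^2=6-2U\cdot y-|y|^2\le C-\tfrac12|y|^2$. This gives tightness, and hence existence of an invariant probability density.

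\emph{Gaussian bounds.} Take $v=e^{b|y|^2}$. Then
\[
g^{-1}L^*(gv)=v\bigl[(b-4b^2)|y|^2-6b+(\tfrac12-2b)\,U\cdot y\bigr].
\]
For $b=\eps/4$, i.e.\ $w_+:=e^{-\frac{1-\eps}{4}|y|^2}$, the coefficient of $|y|^2$ is $\tfrac{\eps(1-\eps)}{4}>0$. For $b=-\eps/4$, i.e.\ $w_-:=e^{-\frac{1+\eps}{4}|y|^2}$, it is $-\tfrac{\eps(1+\eps)}{4}<0$. The remaining terms are bounded by $C(1+C_{U,0})$. Hence there is $R_0=R_0(\eps,C_{U,0})$ with $L^*w_+\ge0$ and $L^*w_-\le0$ on $\{|y|>R_0\}$. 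Note that $L^*$ has zeroth-order coefficient $-\tfrac32$ of the wrong sign. So I would apply the maximum principle in the conjugated form, to the ratios $w/w_\pm$: these satisfy equations with first-order terms and a potential of the good sign for large $|y|$, after possibly enlarging $R_0$.

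Normalize $w$ by $\max_{\overline{B_{R_0}}}w=1$. The Harnack inequality on $B_{2R_0}$, with constants depending only on $R_0$ and $C_{U,0}$, gives $\min_{\overline{B_{R_0}}}w\ge c(\eps,C_{U,0})$. Comparing $w$ with $Mw_+$ and $mw_-$ on the exterior domain then yields~\eqref{eq:weight:decay}, with $m,M$ depending only on $\eps,C_{U,0}$. The behaviour at infinity needed to close the comparison comes from $w\in L^2(e^{|y|^2/4}dy)$ together with local elliptic estimates: these give $w\,e^{(1-\eps')|y|^2/4}\to0$, which suffices for the upper comparison. For the lower comparison, $w_-$ decays strictly faster than the a priori lower size of $w$.

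\emph{Main obstacle.} The main obstacle is this last step: justifying the exterior maximum principle on an unbounded domain, where the conditions at infinity and the sign of the zeroth-order term must be handled carefully. I expect the first approach to fail without the conjugation, because of the $-\tfrac32$ term. I would first try the ratio $w/w_\pm$ with a Phragmén--Lindelöf-type argument on annuli $R_0<|y|<R$, letting $R\to\infty$.
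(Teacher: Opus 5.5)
Your route differs from the paper's, and its existence half is sound. On $L^2_\mu$ the drift $U\cdot\nabla$ is a relatively compact perturbation of the self-adjoint Ornstein--Uhlenbeck operator, so $L$ has compact resolvent. Since $L1=0$ with $1>0$, Krein--Rutman makes $0$ the principal eigenvalue. Hence $v\mapsto g^{-1}L^*(gv)$ has a positive kernel element $v$, and $w=gv\in L^2(e^{|y|^2/4}dy)$.

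The gap is in the step you flag yourself. Membership in $L^2(e^{|y|^2/4}dy)$ plus local elliptic estimates gives far less than you claim. Because the drift has size $\sim|y_0|$, those estimates must be applied on balls of radius $\sim|y_0|^{-1}$. They yield only $w(y_0)\les(1+|y_0|)^{3/2}e^{-(|y_0|-1)^2/8}$, a Gaussian rate of $\frac18$. That gives $w\,e^{(1-\eps')|y|^2/4}\to0$ only for $\eps'>\frac12$. So for small $\eps$ the ratio $w/w_+$ may a priori grow like $e^{(\frac{1-\eps}{4}-\frac18)|y|^2}$, and the maximum principle on $R_0<|y|<R$ does not close as $R\to\infty$. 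Likewise, in the lower comparison there is no ``a priori lower size of $w$'' at infinity; that is exactly what is being proved.

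Both holes can be filled with tools you already have.

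\emph{Upper bound.} Your computation with $b=\frac14-\gamma$ shows that $h_\gamma:=e^{-\gamma|y|^2}$ satisfies $L^*h_\gamma\ge0$ for large $|y|$ whenever $0<\gamma<\frac14$. Fix $\gamma<\frac18$. Then $h_\gamma/w_+$ is a supersolution of the conjugated operator, whose potential $L^*w_+/w_+$ is nonnegative outside a ball. It also grows strictly faster than the a priori bound on $w/w_+$. Compare $w/w_+$ with $\max_{|y|=R_0}(w/w_+)+\delta\,h_\gamma/w_+$ on annuli, let $R\to\infty$, then let $\delta\to0$. This Phragm\'en--Lindel\"of argument gives the upper bound with $M=M(\eps,C_{U,0})$.

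\emph{Lower bound.} Conjugate by $w_+$ rather than $w_-$. The function $(mw_--w)/w_+$ is a subsolution of the same operator outside a ball. It is $\le m\,e^{-\eps|y|^2/2}\to0$ using only $w\ge0$. This is precisely the argument of Lemma~\ref{lem:Gaussian:lower}.

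By contrast, the paper never needs information at infinity for the upper bound. It obtains $w$ as a $C^2_{\rm loc}$ limit of Dirichlet principal eigenfunctions $w_R$ on $B_R$, normalized by $w_R(0)=1$. The barrier argument then runs on bounded annuli with zero data on $\partial B_R$. The price is controlling the eigenvalue, $0\le\lambda_R\le R_*^2/R^2$ (Lemma~\ref{lem:lambda:conv}), via the maximum principle for $L$ and the test function $R^2-|y|^2$. These finite-radius eigenfunctions are reused in the proof of Theorem~\ref{thm:local:theorem}.

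Your approach explains directly why the limiting eigenvalue is exactly $0$ and produces the kernel element in one step. The cost is that decay at infinity must then be supplied by the Phragm\'en--Lindel\"of argument above.
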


Assuming that Proposition~\ref{prop:weight:construction} holds, we give the proof of Theorem~\ref{thm:main} for $|\alpha| \ll 1$.
\begin{proof}[Proof of Theorem~\ref{thm:main}: the small $|\alpha|$  case]
We multiply~\eqref{eq:L:Pi:alpha} by $w$ and integrate over $\RR^3$, an operation justified by the Gaussian-type decay of the weight function $w$ established in~\eqref{eq:weight:decay},\footnote{Technically speaking, to justify this computation we'd also need to have some mild decay information on $\nabla w$, such as $\lim_{R\to\infty} R^{-1} \int_{R\leq |y| \leq 2 R} |\nabla w| dy =0$. The weight $w$ satisfies this property; in fact, we could work a bit harder in the proof of Proposition~\ref{prop:weight:construction} and establish pointwise Gaussian-type upper bounds for $|\nabla w|$, analogously to the upper bound in~\eqref{eq:weight:decay}. The aforementioned mild decay property, may however be obtained via the Caccioppoli inequality for $L^* w = 0$, which together with~\eqref{eq:weight:decay} yields $\| \nabla w\|_{L^2(R\leq |y|\leq 2R)} \leq C(C_{U,0}) \| w\|_{L^2(R/2\leq |y|\leq 4R)} \leq C^\prime(C_{U,0},\eps) R^{\frac 32} e^{-\frac{1-\eps}{16} R^2}$, for all $R\geq 1$.} to obtain
\begin{align}
\int_{\RR^3} |\Omega|^2 \, w \, dy
&= - \int_{\RR^3} L \Pi \,   w \, dy
+ \alpha \int_{\RR^3} E \, w \, dy
\notag\\
&= - \int_{\RR^3}\Pi \,   L^* w  \, dy
+ \alpha \int_{\RR^3} E \, w \, dy
= \alpha \int_{\RR^3} E \, w \, dy.
\label{eq:vorticity:error}
\end{align}
In the last equality we have appealed to~\eqref{eq:weight:kernel}. Then, using~\eqref{eq:E:bnd} and~\eqref{eq:weight:decay} with $\eps = \frac 12$, we obtain
\begin{align*}
\|\Omega\|_{L^2(B_{\bar R})}^2
&\leq \frac{1}{\inf_{B_{\bar R}} w } \int_{B_{\bar R}} |\Omega|^2 \, w \, dy
\notag\\
&\leq m^{-1}  e^{\frac 38 \bar R^2} \cdot |\alpha| \, \|E\|_{L^\infty(\RR^3)} \|w\|_{L^1(\RR^3)}
\leq|\alpha| \, \cdot \bigl( m^{-1}  e^{\frac 38 \bar R^2}  C_E \, 200 \, M  \bigr)
.
\end{align*}
Note that $m,M,C_E,\bar R$ only depend on $C_{U,0}$ and $C_{U,1}= C_{U,1}(C_{U,0})$, these constants are independent of $\alpha$. Moreover, the constant $C_{\Omega}$ appearing in~\eqref{eq:small:enstrophy} also does not depend on $\alpha$. Therefore,   letting
\[
\underline{\alpha}:=\tfrac{1}{200} m  e^{- \frac 38 \bar R^2}  C_E^{-1} \, M^{-1} \,  C_{\Omega}^{-2} ,
\]
we obtain that for $|\alpha| < \underline{\alpha}$ we have $\|\Omega\|_{L^2(B_{\bar R})} < C_{\Omega}^{-1}$. Proposition~\ref{prop:local:vort} then implies $U\equiv 0$.
\end{proof}

\begin{remark}[A probabilistic interpretation]
\label{rem:probabilistic} 
Since $\nabla \cdot (U + \tfrac12 y) = \frac 32$, the equation $L^* w = 0$ may be
rewritten as $\Delta w + \nabla\cdot((U+\frac12 y)\,w) = 0$, which is the stationary
Fokker-Planck equation for the diffusion ${\rm d}X_t = -(U(X_t) + \frac12 X_t)\,{\rm d}t + \sqrt{2}\,{\rm d}B_t$. When $U \equiv 0$ this is the Ornstein-Uhlenbeck process, whose invariant density (suitably normalized) is precisely the Gaussian $\mu$
from~\eqref{eq:L2:mu:def}. Proposition~\ref{prop:weight:construction}
states that this equilibrium survives, as a deformation, under a divergence-free drift
perturbation $U$ vanishing at infinity. The balance between the drift and the diffusion is altered significantly only on a
compact set, and at infinity only through an arbitrarily small loss $\eps$ in the Gaussian rate. The existence of the invariant probability measure may also be obtained by
probabilistic arguments, via the Lyapunov function $V(y) = |y|^2$, which satisfies
$\Delta V - (U + \tfrac12 y)\cdot \nabla V \leq 6 + 2 C_{U,0} - |y|^2 < 0$ outside a
compact set, in the sense of Khasminskii~\cite{Khasminskii12}; see
also~\cite{BogachevKrylovRockner15}. Here we give a self-contained construction via standard elliptic theory, to emphasize the two-sided pointwise Gaussian bounds~\eqref{eq:weight:decay}, with
constants that depend only on $\eps$ and $C_{U,0}$.
\end{remark}

The remainder of this section is dedicated to the proof of Proposition~\ref{prop:weight:construction}, which clearly did the heavy lifting in the above argument.  
The argument consists of the following steps. First, along an increasing sequence of radii $R_k \to \infty$, we construct strictly positive smooth eigenfunctions $w_{R_k}$ for the  operator $L^*$ on the ball $B_{R_k}$, with homogeneous Dirichlet boundary conditions. We show that the corresponding eigenvalue $\lambda_{R_k}$ is non-negative and converges to zero as $k\to\infty$; see Lemma~\ref{lem:lambda:conv}. Second, we show by means of a barrier argument that after a suitable normalization, namely $w_{R_k}(0)=1$, the sequence $w_{R_k}$ satisfies the Gaussian upper bound stated in~\eqref{eq:weight:decay} on each $B_{R_k}$, uniformly as $R_k \to\infty$; see Lemma~\ref{lem:Gaussian:upper}. Third, we pass to the limit (in say, $C^2_{loc}$) to obtain the existence of a smooth positive function $w$ which lies in the kernel of $L^*$ and satisfies the same Gaussian upper bound; see Lemma~\ref{lem:pass:R:infty}. Fourth, using a second barrier argument we show that  $w$ satisfies the Gaussian lower bound stated in~\eqref{eq:weight:decay}; see Lemma~\ref{lem:Gaussian:lower}. Together, these  yield the proof of Proposition~\ref{prop:weight:construction}, which is given at the end of this section.

\begin{lemma}[Existence and basic properties of the principal eigenpair]
\label{lem:lambda:conv}
For every $R>0$ the homogeneous Dirichlet eigenvalue problem
\begin{subequations}
\label{eq:Dirichlet:EV:L*}
\begin{alignat}{2}
L^*w_R &= \lambda_R w_R,\qquad && \mbox{in}\qquad B_R,
\label{eq:Dirichlet:EV:L*:a}\\
w_R &= 0,\qquad && \mbox{on}\qquad \partial B_R,
\end{alignat}
\end{subequations}
admits a principal eigenpair $(\lambda_R,w_R)$, normalized with $w_R(0)=1$, such that:
\begin{enumerate}[label=(\roman*)]
\item $w_R\in C^\infty(B_R) \cap C(\overline{B_R})$ and $w_R>0$ in $B_R$;
\item $\lambda_R\in\RR$, and every other eigenvalue $\sigma\in\mathrm{spec}(L^*)$ satisfies $\mathrm{Re}(\sigma)\ge\lambda_R$;
\item $\lambda_R\ge 0$;
\item there exists $R_*=R_*(C_{U,0})>0$ such that $\lambda_R\le (R_*/ R)^{2}$ for every $R\ge R_*$.
\end{enumerate}
\end{lemma}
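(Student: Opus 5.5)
The plan is to treat (i)--(ii) with the Krein--Rutman theorem, (iii) by integrating the eigenvalue equation over $B_R$, and (iv) by testing the eigenvalue equation against an explicit radial test function. Throughout I write $b := U + \tfrac12 y$. Since $\nabla\cdot U = 0$ we have $\nabla\cdot b = \tfrac32$, so $L^*$ is in divergence form:
\[
L^* w = -\Delta w - \nabla\cdot(b\,w),
\]
which is the observation in Remark~\ref{rem:probabilistic}.

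\emph{(i)--(ii).} On the smooth bounded domain $B_R$ the coefficients of $L^*$ are smooth, since $U\in C^\infty$ by Lemma~\ref{lem:gradient}. For $c$ large, the resolvent $(L^*+c)^{-1}$ with Dirichlet data is compact. By the strong maximum principle it is strongly positive on the cone of nonnegative functions in $C_0(\overline{B_R})$. The Krein--Rutman theorem then gives a simple real principal eigenvalue $\lambda_R$ with a positive eigenfunction, and every other eigenvalue satisfies $\mathrm{Re}\,\sigma \ge \lambda_R$. This is the standard statement, e.g.\ in Berestycki--Nirenberg--Varadhan. Elliptic regularity up to the boundary gives $w_R\in C^\infty(B_R)\cap C^1(\overline{B_R})$. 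We normalize $w_R(0)=1$.

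\emph{(iii).} Integrate $L^*w_R=\lambda_R w_R$ over $B_R$. Using the divergence form and $w_R|_{\partial B_R}=0$,
\[
\lambda_R\int_{B_R} w_R\,dy = -\int_{\partial B_R}\partial_\nu w_R\,dS - \int_{\partial B_R} (b\cdot\nu)\, w_R\,dS = -\int_{\partial B_R}\partial_\nu w_R\,dS .
\]
Since $w_R>0$ inside and vanishes on the boundary, $\partial_\nu w_R\le 0$ there. Because $\int w_R>0$, this yields $\lambda_R\ge0$.

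\emph{(iv).} This is the step needing a construction. I would use a duality comparison: suppose $\varphi\ge0$ is nontrivial, $C^2$ on $\overline{B_R}$, vanishes on $\partial B_R$, and satisfies $L\varphi\le\mu\varphi$ in $B_R$, where $L$ is the operator from~\eqref{eq:L:def}. Then, because both $w_R$ and $\varphi$ vanish on $\partial B_R$, all boundary terms drop when integrating by parts, and
\[
\lambda_R\int w_R\varphi = \int (L^*w_R)\varphi = \int w_R\, L\varphi \le \mu\int w_R\varphi ,
\]
so $\lambda_R\le\mu$. Take $\varphi=R^2-|y|^2$. Then $-\Delta\varphi=6$ and $b\cdot\nabla\varphi=-2U\cdot y-|y|^2\le 2C_{U,0}-|y|^2$, using~\eqref{eq:decay} to get $|U\cdot y|\le C_{U,0}$. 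Hence
\[
L\varphi\le a-|y|^2, \qquad a:=6+2C_{U,0}.
\]
\begin{itemize}
\item Where $|y|^2\ge a$, we have $L\varphi\le0\le\mu\varphi$ for any $\mu\ge0$.
\item Where $|y|^2<a$, we have $\varphi\ge R^2-a$, so it suffices that $a\le\mu(R^2-a)$.
\end{itemize}
So we may take $\mu=a/(R^2-a)$. Setting $R_*^2:=2a$, for $R\ge R_*$ we get $\mu\le 2a/R^2=(R_*/R)^2$, which proves (iv) with $R_*=R_*(C_{U,0})$. The only delicate points are the boundary regularity of $w_R$ and the application of Krein--Rutman, both standard on a ball with smooth coefficients.
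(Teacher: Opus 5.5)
Your proof is correct. For (i)--(ii) and (iv) it is essentially the paper's argument:

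\begin{itemize}
\item For (i)--(ii) you use the same Krein--Rutman framework. The paper shifts $L^*$ by $\mu\ge\tfrac32$ to make the potential nonnegative and cites Evans. One technical caveat: the positive cone of $C_0(\overline{B_R})$ has empty interior, so strong positivity is normally set up on $C^1_0(\overline{B_R})$ via the Hopf lemma. This is standard and not a gap.
\item For (iv) you use the same test function $R^2-|y|^2$ and the same duality $\lambda_R\int w_R\varphi=\int w_R\,L\varphi$. You split at $|y|^2=6+2C_{U,0}$ rather than at $|y|=R/2$, which gives $R_*=\sqrt{2(6+2C_{U,0})}$ instead of $2\sqrt{6+2C_{U,0}}$.
\end{itemize}

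The genuinely different step is (iii). The paper passes to the principal eigenfunction $\varphi_R$ of $L$ and invokes $\lambda_1(L)=\lambda_1(L^*)$. It then evaluates $L\varphi_R=\lambda_R\varphi_R$ at an interior maximum, where the absence of a zeroth-order term gives $\lambda_R\varphi_R(y_*)=-\Delta\varphi_R(y_*)\ge 0$.

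You use the same structural fact, $L1=0$ (equivalently, $L^*$ is in divergence form), but in integrated form. Testing the adjoint equation against the constant $1$ yields $\lambda_R\int_{B_R}w_R=-\int_{\partial B_R}\partial_\nu w_R\ge 0$.

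What your route buys:
\begin{itemize}
\item It works with $w_R$ itself and does not need the equality of the principal eigenvalues of $L$ and $L^*$.
\item It needs $C^1$ regularity of $w_R$ up to $\partial B_R$, but the integration by parts in (iv) requires that anyway.
\item Combined with the Hopf lemma, it gives the strict inequality $\lambda_R>0$.
\item It is exactly the flux identity $\|\partial_n w_{\bar R}\|_{L^1(\partial B_{\bar R})}=\lambda_{\bar R}\|w_{\bar R}\|_{L^1(B_{\bar R})}$ that the paper later uses in Section~\ref{sec:local} to bound the boundary term in~\eqref{eq:goodie:2}.
\end{itemize}
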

\begin{proof}[Proof of Lemma~\ref{lem:lambda:conv}]
We recall a few facts concerning the spectrum of non-symmetric elliptic operators, that sometimes go  under the umbrella name of  Krein-Rutman or Donsker-Varadhan theory~\cite{BNV94,GilbargTrudinger98,Evans10}. Let $\mathcal{D} \subset \RR^3$ be an open  bounded  connected domain, with $\partial  \mathcal{D}$ that is smooth. Let $\mathcal{L}:= - a^{ij} \partial_{ij} + b^i \partial_i + c$ be a uniformly elliptic operator with smooth symmetric coefficients $a=\{a^{ij}\}$, with smooth and bounded drift $b=\{b^i\}$ and potential $c$; assume furthermore that $c\geq 0$ in $\mathcal{D}$. Consider the spectrum of $\mathcal{L}$ with homogeneous Dirichlet boundary conditions on $\partial \mathcal{D}$.
It is well-known (see~\cite[Theorem 3, \S~6.5]{Evans10}) that $\mathcal{L}$ admits a principal eigenvalue $\lambda_1 \in \RR$, such that if $\sigma \in \mathbb{C}$ is any other eigenvalue of $\mathcal{L}$, we have $\mathrm{Re}(\sigma) \geq \lambda_1$. The corresponding eigenfunction $w$ is positive within $\mathcal{D}$, and the eigenvalue $\lambda_1$ is simple, so that $w$ is uniquely selected by normalizing $w(x_0) =1$ for some $x_0\in \mathcal{D}$. Moreover, the principal eigenfunction is smooth up to the boundary of the domain~\cite{GilbargTrudinger98}. The principal eigenvalue of $\mathcal{L}$ and its formal adjoint $\mathcal{L}^*$ (taken with homogeneous Dirichlet boundary conditions) agree, namely $\lambda_1(\mathcal{L}) = \lambda_1(\mathcal{L}^*)$. 
Lastly, if $\mu \in \RR$, then the operator $\mathcal{L}$ and the shifted operator $\mathcal{L}_{\mu}:= \mathcal{L} + \mu {\rm Id}$ have the same (normalized) positive principal eigenfunction, and $\lambda_1(\mathcal{L}_\mu) = \mu + \lambda_1(\mathcal{L})$.

The operator $L$ defined in~\eqref{eq:L:def} and the operator $L^*_\mu$ for $\mu \geq 3/2$ (the shift by $\mu$ ensures that the constant term in~\eqref{eq:L*:def} is non-negative) fall under the class of operators $\mathcal{L}$ discussed in the above paragraph. In particular, $L^*_\mu$ admits a principal eigenpair which we denote as $(\mu+\lambda_R, w_R)$. Note that  $L^*_{\mu}$ and $L^*$ have the same \emph{positive} principal and smooth eigenfunction, so that  $w_R>0$ on $B_R$. Moreover $\mu + \lambda_R = \lambda_1(L^*_\mu) = \mu + \lambda_1(L^*)$. This proves part (i) of Lemma~\ref{lem:lambda:conv}. Since $\lambda_R: = \lambda_1(L^*) = \lambda_1(L)$, we also have that the principal eigenvalue is \emph{real}, thereby proving part (ii) of Lemma~\ref{lem:lambda:conv}. 

Denote the principal eigenpair for the operator $L$ on $B_R$ (with homogeneous Dirichlet boundary conditions) by $(\lambda_R,\varphi_R)$. Since $L$ has no zeroth order term, and since $\varphi_R>0$ in $B_R$ is smooth and vanishes on $\partial B_R$, it must attain its \emph{nonzero} maximum on $\overline{B_R}$ at a point $y_* \in B_R$. At this point, $-\Delta \varphi_R (y_*) \geq 0$ and $\nabla \varphi_R(y_*)=0$, so that $\lambda_R \varphi_R(y_*) = L \varphi_R(y_*) = - \Delta \varphi_R(y_*) \geq 0$. This shows that $\lambda_R \geq 0$, thereby proving part (iii) of Lemma~\ref{lem:lambda:conv}.

In order to prove part (iv) of Lemma~\ref{lem:lambda:conv}, we let $\psi(y):=R^2-|y|^2$, so that $\psi>0$ in $B_R$ and $\psi=0$ on $\partial B_R$. Since $\nabla\psi=-2y$, $\Delta\psi=-6$, and~\eqref{eq:decay} holds, we have
\[
L\psi=-\Delta\psi+(U+\tfrac12 y)\cdot\nabla\psi=6+(U+\tfrac12 y)\cdot(-2y)=6-2U\cdot y-|y|^2
\leq 6 + 2 C_{U,0} - |y|^2.
\]
Let $R_* = 2 \sqrt{6 + 2 C_{U,0}}$, and assume that $R\geq R_*$. If $R/2 \leq |y| \leq R$, then the above inequality shows that $L\psi(y) \leq 0$, while $\psi(y) >0$ by construction. In the complementary regime $|y| < R/2$, we have $L\psi(y) \leq 6 + 2 C_{U,0} = R_*^2/4$, while $\psi(y) = R^2 - |y|^2 \geq 3 R^2/4$. Thus, we have that 
\[
L\psi(y) \leq R_*^2 R^{-2} \psi(y)
\]
for all $y \in B_R$. Then, since both $w_R$ and $\psi$ are positive in $B_R$ and vanish on $\partial B_R$, we obtain
\[
\lambda_R \int_{B_R} w_R \, \psi \, dy 
= \int_{B_R} L^* w_R \, \psi \, dy 
= \int_{B_R} w_R \, L \psi \, dy 
\leq R_*^2 R^{-2} \int_{B_R} w_R \,  \psi \, dy .
\]
This concludes the proof of $\lambda_R \leq R_*^2 R^{-2}$ for $R\geq R_*$, and thus of the Lemma.
\end{proof}

\begin{lemma}[Gaussian upper bound]
\label{lem:Gaussian:upper}
Let $(\lambda_R,w_R)$ be as in Lemma~\ref{lem:lambda:conv}, with $R\geq R_*$. 
For any $\eps \in (0,1)$ there exists a constant $M = M(\eps, C_{U,0})>0$ such that 
\begin{equation*}
w_R(y) \leq 
M \, e^{-\frac{1-\eps}{4} |y|^2} 
\end{equation*}
for all $y\in B_R$.
In particular, $M$ is independent of $R$ and $\alpha$.
\end{lemma}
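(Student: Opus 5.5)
The plan is a barrier (comparison) argument with the explicit Gaussian $g(y):=e^{-a|y|^2}$, where $a:=\frac{1-\eps}{4}<\frac14$. The comparison is run on an annulus $\{\rho<|y|<R\}$, and the bound near the origin comes from a Harnack-type estimate using the normalization $w_R(0)=1$.

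\emph{Step 1: $g$ is a strict supersolution away from the origin.} Write the eigenvalue equation as $(L^*-\lambda_R)w_R=0$. Using $\nabla g=-2ay\,g$ and $\Delta g=(4a^2|y|^2-6a)g$, we get
\[
(L^*-\lambda_R)g = g\,\bigl[a(1-4a)|y|^2 + 6a + 2a\,U\cdot y - \tfrac32 - \lambda_R\bigr].
\]
By Lemma~\ref{lem:lambda:conv}(iv), for $R\ge R_*$ we have $0\le\lambda_R\le 1$. By \eqref{eq:decay}, $|U\cdot y|\le C_{U,0}$. Since $a(1-4a)>0$, there is $\rho=\rho(\eps,C_{U,0})\ge 1$ with $(L^*-\lambda_R)g\ge c_0\,g$ on $\{|y|\ge\rho\}$ for some $c_0>0$. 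The key point is that $\rho$ is independent of $R$ and $\alpha$.

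\emph{Step 2: comparison on the annulus.} The operator $L^*-\lambda_R$ has the negative zeroth-order term $-\frac32-\lambda_R$, so the classical maximum principle does not apply directly. I would remove this by writing $v:=w_R/g$. Then $v$ solves an elliptic equation whose zeroth-order coefficient is $(L^*-\lambda_R)g/g\ge c_0>0$ on the annulus, with first-order drift $-(U+\tfrac12 y)+4ay$. The maximum principle for $v$ on $\{\rho<|y|<R\}$, together with $v=0$ on $\partial B_R$, gives
\[
w_R(y)\le \Bigl(\sup_{|z|=\rho} w_R(z)\Bigr)e^{a\rho^2}\,g(y), \qquad \rho\le|y|<R.
\]
If $R\le\rho$ the annulus is empty and only Step 3 is needed.

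\emph{Step 3: bounding $w_R$ on $B_\rho\cap B_R$ uniformly in $R$.} This is the step I expect to be the main obstacle. On $B_{2\rho}$ the coefficients of $L^*-\lambda_R$ are bounded in terms of $\rho$ and $C_{U,0}$ only, since $\lambda_R\le1$.

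For $R\ge 4\rho$, the Harnack inequality for positive solutions on $B_{2\rho}\subset B_R$ gives $\sup_{B_\rho}w_R\le C_H\,w_R(0)=C_H$, with $C_H=C_H(\eps,C_{U,0})$.

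For $R_*\le R<4\rho$ the ball $B_{2\rho}$ need not lie in $B_R$, and interior Harnack alone does not control $w_R$ near $\partial B_R$. In this bounded range of radii I would first try a Carleson (boundary Harnack) estimate for positive solutions vanishing on the smooth boundary $\partial B_R$. Since the geometry is a ball of radius in the compact interval $[R_*,4\rho]$, its constants are uniform, and chaining it with interior Harnack along a path from $0$ gives $\sup_{B_R\cap B_\rho}w_R\le C\,w_R(0)$. A fallback is the local maximum principle $\sup_{B_r(z)\cap B_R} w_R\lesssim r^{-3/2}\|w_R\|_{L^2(B_{2r}(z)\cap B_R)}$ up to the boundary, which follows from De Giorgi--Moser iteration with the zero Dirichlet data. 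One would combine it with an $L^2$ bound on $w_R$, obtained from interior Harnack on $B_{R/2}$ plus a Hopf-type control near $\partial B_R$.

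\emph{Conclusion.} Combining Steps 2 and 3 gives the claimed bound with $M:=C\,e^{a\rho^2}$ (taking $M\ge C e^{a\rho^2}$ to cover $|y|<\rho$ as well). Here $M$ depends only on $\eps$ and $C_{U,0}$, hence is independent of $R$ and $\alpha$.
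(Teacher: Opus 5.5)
Your proposal is correct and follows essentially the same route as the paper. The paper also sets $\phi_R=w_R\,e^{\frac{1-\eps}{4}|y|^2}$ and notes that the resulting zeroth-order coefficient $V_\eps$ is nonnegative for $|y|\ge R_\eps(\eps,C_{U,0})$; it then applies the weak maximum principle on the annulus $R_\eps<|y|<R$, and bounds $\phi_R$ on $B_{R_\eps}$ by interior Harnack on $B_{2R_\eps}$ together with $w_R(0)=1$. Your separate treatment of the bounded range $R_*\le R<4\rho$, via a Carleson/boundary Harnack estimate, covers a case the paper passes over silently, since its Harnack step implicitly needs $B_{2R_\eps}\subset B_R$; that range is harmless for the paper's later uses, which only take $R$ large.
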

\begin{proof}[Proof of Lemma~\ref{lem:Gaussian:upper}]
Let $G_\eps(y):= e^{-\frac{1-\eps}{4} |y|^2}$, and define $\phi_R(y):= w_R(y) / G_\eps(y)$. Our goal is to prove $\phi_R \leq M$ on $B_R$, for some $M>0$ independent of $R$ and $\alpha$. Note that $\phi_R=0$ on $\partial B_R$.

Using the definition of $L^*$ in~\eqref{eq:L*:def} and the PDE satisfied by $w_R$ in~\eqref{eq:Dirichlet:EV:L*:a}, we obtain
\begin{align*}
&-\Delta \phi_R - (U + \tfrac 12 y)\cdot \nabla \phi_R 
\notag\\
&\quad 
= G_\eps^{-1} \bigl( -\Delta w_R - (U + \tfrac 12 y)\cdot\nabla w_R)
- 2 \nabla w_R  \cdot \nabla G_\eps^{-1} 
- w_R \bigl( \Delta G_{\eps}^{-1} + (U+\tfrac 12 y)\cdot\nabla G_{\eps}^{-1} \bigr)
\notag\\
&\quad 
= G_\eps^{-1} \bigl( L^* w_R + \tfrac 32 w_R)
- 2 G_\eps \nabla \phi_R  \cdot \nabla G_\eps^{-1} 
- 2 \phi_R \nabla G_\eps  \cdot \nabla G_\eps^{-1} 
+ \phi_R G_\eps \bigl( L^* G_{\eps}^{-1} + \tfrac 32 G_{\eps}^{-1} \bigr)
\notag\\
&\quad 
=   - \nabla \phi_R  \cdot \nabla \log G_\eps^{-2} 
+ \phi_R G_\eps \bigl( L^* G_{\eps}^{-1} + (3+\lambda_R) G_{\eps}^{-1} - 2 G_\eps^{-1} \nabla G_\eps  \cdot \nabla G_\eps^{-1} \bigr).
\end{align*}
Using the fact that $\nabla \log G_\eps^{-2} = (1-\eps) y$ and 
\begin{align*}
&G_\eps \bigl( L^* G_{\eps}^{-1} + (3+\lambda_R) G_{\eps}^{-1} - 2 G_\eps^{-1} \nabla G_\eps  \cdot \nabla G_\eps^{-1} \bigr)
\\
&=  (\tfrac 32 +\lambda_R) - \tfrac 32 (1-\eps) - \tfrac 14 (1-\eps)^2 |y|^2  - \tfrac{1}{2} (1-\eps)(U\cdot y + \tfrac 12 |y|^2)  + \tfrac 12 (1-\eps)^2 |y|^2 
\\
&= \tfrac 32  \eps  +\lambda_R - \tfrac{1}{2} (1-\eps) U\cdot y  - \tfrac{1}{4} \eps (1-\eps) |y|^2 ,
\end{align*}
we thus obtain that 
\begin{align}
 -\Delta \phi_R - \bigl(U + (\eps - \tfrac 12) y\bigr)\cdot \nabla \phi_R 
 +\underbrace{ \big(\tfrac{\eps (1-\eps)}{4}  |y|^2 - \tfrac{3\eps}{2} - \lambda_R + \tfrac{1-\eps}{2}  U\cdot y  \bigr) }_{=: V_\eps(y)} \phi_R = 0
 \label{eq:zR:a}
\end{align}
in $B_R$. By~\eqref{eq:decay}, if $R \geq R_*=R_*(C_{U,0})$ (as defined in item (iv) of Lemma~\ref{lem:lambda:conv}), then
\[
V_\eps(y)\geq 
\tfrac{\eps (1-\eps)}{4}  |y|^2 - \tfrac{3\eps}{2} - 1 - \tfrac{1-\eps}{2}  C_{U,0}.
\]
Thus, for all 
\[
R \geq R_{\eps}:= \max \left\{ R_* , \sqrt{ \tfrac{4+6\eps}{\eps(1-\eps)} + \tfrac{2 C_{U,0}}{\eps}} \right\}
\]
the zeroth order term multiplying $\phi_R$ in~\eqref{eq:zR:a} is positive,  
namely $V_\eps(y) \geq 0$, for all $ R_\eps < |y| <R$.
Thus, we can apply the weak maximum principle in the annulus $R_\eps < |y| < R$, which together with the boundary condition $\phi_R = 0$ on $\partial B_R$ implies
\begin{equation}
\max_{R_\eps \leq |y| \leq R} \phi_R = \max_{|y| = R_\eps} \phi_R.
\label{eq:zR:b} 
\end{equation}
Therefore, we are left to bound $\phi_R$ on $\partial B_{R_\eps}$, where we recall that  $R_\eps$ is independent of $R$ and $\alpha$.

Since $\phi_R >0$ in $B_{2R_\eps}$, and both the drift and the potential term of the uniformly elliptic operator acting on $\phi_R$ in~\eqref{eq:zR:a} are uniformly bounded\footnote{To be precise, we have $\| U + (\eps - \frac 12 ) y\|_{L^\infty(B_{2R_\eps})} \leq C_{U,0} + R_\eps$ and $\|V_\eps\|_{L^\infty(B_{2R_\eps})} \leq R_\eps^2+ \frac 52 +  \frac 12 C_{U,0}  $.} solely in terms of $C_{U,0}$ and $\eps$, by the Harnack inequality (see e.g.~\cite[Corollary 9.25]{GilbargTrudinger98}) we obtain that there exists $M = M (C_{U,0},\eps)> 0$ such that 
\[
\sup_{B_{R_\eps}} \phi_R \leq M \inf_{B_{R_\eps}} \phi_R.
\]
To conclude, we recall that $w_R$ was normalized such that $w_R(0)=1$. Since $G_\eps(0)=1$, it thus follows that $\phi_R(0)=1$ and so $\inf_{B_{R_\eps}} \phi_R \leq 1$. The above Harnack estimate and the continuity of $\phi_R$ thus imply
\begin{equation}
\max_{|y| \leq R_\eps } \phi_R \leq M.
\label{eq:zR:c} 
\end{equation}
Combining \eqref{eq:zR:b} and~\eqref{eq:zR:c} implies that $\phi_R \leq M$ on $B_R$, with $M=M(\eps,C_{U,0})$. 
\end{proof}

\begin{lemma}[Passing $R\to \infty$]
\label{lem:pass:R:infty}
Let $(\lambda_R,w_R)$ be as in Lemma~\ref{lem:lambda:conv} and fix $\eps \in (0,1)$ and $M=M(\eps,C_{U,0})$ as in Lemma~\ref{lem:Gaussian:upper}. 
There exists a sequence $R_k \to \infty$ as $k\to \infty$ such that $w_{R_k}$ converges in $C^{2}_{\rm loc}$ to a strictly positive solution $w$ of 
\[
L^* w = 0
\]
in $\RR^3$. 
 Additionally, we have $w(0)=1$ and  $w$ satisfies the Gaussian upper bound
\begin{equation*}
w(y) \leq  M \, e^{-\frac{1-\eps}{4} |y|^2},
\qquad \mbox{for all} \qquad y \in \RR^3.
\end{equation*}
\end{lemma}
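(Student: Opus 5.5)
The plan is a compactness argument built on three inputs: the uniform Gaussian upper bound of Lemma~\ref{lem:Gaussian:upper}, interior elliptic regularity, and the eigenvalue convergence $\lambda_R\to0$ from Lemma~\ref{lem:lambda:conv}(iii)--(iv). Take any sequence $R_k\to\infty$ with $R_k\ge R_*$. By Lemma~\ref{lem:Gaussian:upper}, $0<w_{R_k}\le M$ on $B_{R_k}$, with $M=M(\eps,C_{U,0})$ independent of $k$. By Lemma~\ref{lem:lambda:conv}, $0\le\lambda_{R_k}\le (R_*/R_k)^2$.

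Fix a ball $B_\rho$. For $k$ large, $B_{2\rho}\subset B_{R_k}$, and $w_{R_k}$ solves $-\Delta w_{R_k}-(U+\tfrac12 y)\cdot\nabla w_{R_k}-(\tfrac32+\lambda_{R_k})w_{R_k}=0$ in $B_{2\rho}$. The coefficients here are smooth. Their $C^{k,\beta}(B_{2\rho})$ norms are bounded in terms of $\rho$ and $C_{U,0}$ via \eqref{eq:decay}--\eqref{eq:decay:grad}; the higher derivatives of $U$ are also locally bounded since $U\in C^\infty$, and we only need local compactness, so $k$-uniformity suffices. Interior Schauder estimates then give $\|w_{R_k}\|_{C^{2,\beta}(B_\rho)}\le C(\rho)\,\sup_{B_{2\rho}}w_{R_k}\le C(\rho)M$. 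By Arzel\`a--Ascoli and a diagonal argument over $\rho=1,2,3,\dots$, we extract a subsequence, still called $R_k$, along which $w_{R_k}\to w$ in $C^2_{\rm loc}(\RR^3)$.

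Passing to the limit in the equation, using $\lambda_{R_k}\to0$ and $C^2_{\rm loc}$ convergence, gives $L^*w=0$ pointwise in $\RR^3$, and $w\in C^2$. Since the convergence is pointwise, the normalization survives: $w(0)=\lim w_{R_k}(0)=1$. For fixed $y$ and all large $k$ we have $y\in B_{R_k}$, so the bound $w_{R_k}(y)\le M e^{-\frac{1-\eps}{4}|y|^2}$ passes to the limit and gives the claimed upper bound on all of $\RR^3$.

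The only step that needs an argument is strict positivity. The limit gives $w\ge0$ and $w(0)=1$, so $w\not\equiv0$. The function $w$ is a nonnegative solution of $L^*w=0$, an equation with locally bounded drift and bounded zeroth-order coefficient. The strong maximum principle alone is awkward here because the zeroth-order term $-\tfrac32$ has the wrong sign. Instead I apply the Harnack inequality \cite[Corollary 9.25]{GilbargTrudinger98} on each ball $B_{\rho}$: it gives $\sup_{B_\rho}w\le C(\rho)\inf_{B_\rho}w$. Since $\sup_{B_\rho} w\ge w(0)=1$, this yields $\inf_{B_\rho}w\ge C(\rho)^{-1}>0$ on every ball, so $w>0$ everywhere. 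Alternatively, one can apply Harnack uniformly to the $w_{R_k}$ before taking the limit; this gives a $k$-independent positive lower bound on $B_\rho$ that passes to the limit.
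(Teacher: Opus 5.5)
Your proof is correct. The compactness part coincides with the paper's argument: the uniform bound from Lemma~\ref{lem:Gaussian:upper}, $\lambda_R\le 1$ for $R\ge R_*$ (so the zeroth-order coefficient is uniformly bounded), interior Schauder estimates, Arzel\`a--Ascoli with a diagonal extraction, and $\lambda_{R_k}\to 0$ to pass to the limit in the equation.

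The one genuine difference is the strict positivity step, and here the sign of the zeroth-order term is less of an obstacle than you suggest. The paper uses $w\ge 0$ to rewrite the equation as $\bigl(L^*+\tfrac32\bigr)w=\tfrac32 w\ge 0$. The operator $L^*+\tfrac32=-\Delta-(U+\tfrac12 y)\cdot\nabla$ has no zeroth-order term, so $w$ is a supersolution of it. The classical strong minimum principle \cite[Theorem 3.5]{GilbargTrudinger98} then forces $w\equiv 0$ if $w$ vanished at any interior point, contradicting $w(0)=1$. Equivalently, the strong maximum principle needs no sign on the zeroth-order coefficient when the interior extremum value is exactly $0$.

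Your route through the Krylov--Safonov Harnack inequality \cite[Corollary 9.25]{GilbargTrudinger98} is equally valid, since that result also needs no sign condition on the zeroth-order coefficient. It is heavier machinery than the paper's elementary argument. In exchange it gives a quantitative lower bound $\inf_{B_\rho}w\ge C(\rho)^{-1}$, with $C(\rho)$ depending only on $\rho$ and $C_{U,0}$. The paper obtains exactly this kind of estimate later, in the proof of Lemma~\ref{lem:Gaussian:lower}, by invoking the same Harnack inequality for $L^*w=0$ on a ball.
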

\begin{proof}[Proof of Lemma~\ref{lem:pass:R:infty}]
This is a standard diagonal limiting argument, which we include for completeness. 
Fix $j\in \mathbb{N}$, and let $R\geq \max\{ j+2, R_*\}$ be arbitrary. Lemma~\ref{lem:Gaussian:upper} shows that the family $\{w_R\}_{R\geq j+2}$ is uniformly bounded in $L^\infty(B_{j+2})$; the bounds do not depend on $j$ either. Since $R\geq R_*$, by item (iv) of Lemma~\ref{lem:lambda:conv}, we have $\lambda_R \leq 1$, and thus by \eqref{eq:Dirichlet:EV:L*:a} the family $\{L^* w_R\}_{R\geq j+2}$ is uniformly bounded in $L^\infty(B_{j+2})$; the bounds do not depend on $j$ either. The drift $U + \frac 12 y$ appearing in the definition of the uniformly elliptic operator $L^*$ (recall~\eqref{eq:L*:def}) is independent of $R$ and lies in $C^2(B_{j+2})$; the bounds do depend on $j$. By interior regularity and Schauder estimates, for each fixed $\mu \in(0,1)$ we may find a constant $C_j = C_j(j, \mu,\eps,C_{U,0},C_{U,1},C_{U,2})>0$ such that $\|w_R\|_{C^{2,\mu}(B_j)} \leq C_j$, uniformly for all $R\geq \max\{ j+2, R_*\}$. By Arzela-Ascoli (as $R \to \infty$, at fixed $j$) and a diagonal argument (as $j\to \infty$), we may extract a diverging sequence $\{ R_k  \}_{k\geq 1}$ and identify a function $w \in C^2(\RR^3)$ such that $w_{R_k} \to w$ in $C^{2}_{\rm loc}(\RR^3)$ as $k\to \infty$. The normalization $w(0)=1$ follows since $w_{R_k}(0)= 1$ for all $k$. By construction, we inherit also the Gaussian upper bound and the lower bound $w\geq 0$. 

Again, fix $j\in \mathbb{N}$, and let $k$ be large enough so that $R_k \geq \max\{ j+2, R_*\}$. By the $C^{2}_{\rm loc}$ convergence $w_{R_k} \to w$ and the definition of the operator $L^*$, we have that $L^* w_{R_k} \to L^* w$ uniformly on $B_j$, as $k \to \infty$. Moreover, by item (iv) of Lemma~\ref{lem:lambda:conv}, we have $\lambda_{R_k} \to 0$ as $k\to \infty$, and thus $\lambda_{R_k} w_{R_k} \to 0$ uniformly on $B_j$, as $k \to \infty$. We deduce $L^* w = 0$ on $B_j$; since $j$ was arbitrary, we deduce that $L^* w = 0$ on $\RR^3$.

We are left to show the strict positivity of $w$. Note that the operator $L^*+ \frac 32$ is uniformly elliptic, has smooth locally bounded coefficients, and \emph{contains no zeroth order term}; cf.~\eqref{eq:L*:def}. Moreover, $(L^*+ \frac 32) w = \frac 32 w \geq 0$ in $\RR^3$, since $w$ is non-negative. By the strong minimum principle (see e.g.~\cite[Theorem 3.5]{GilbargTrudinger98}), if there would exist $y_*\in \RR^3$ such that $w(y_*)=0$, then $w$ would be the constant zero function, contradicting $w(0)=1$.
\end{proof}

\begin{lemma}[Gaussian lower bound]
\label{lem:Gaussian:lower}
Let $w$ be as in Lemma~\ref{lem:pass:R:infty}. 
For any $\eps \in (0,1)$ there exists a constant $m = m(\eps, C_{U,0})>0$ such that 
\begin{equation*}
w(y) \geq 
m \, e^{-\frac{1+\eps}{4} |y|^2},
\qquad \mbox{for all} \qquad y \in \RR^3.
\end{equation*}
In particular, $m$ is independent of $\alpha$.
\end{lemma}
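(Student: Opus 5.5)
The plan is a comparison argument in the exterior of a large ball, mirroring Lemma~\ref{lem:Gaussian:upper}. Since $w$ is not known to be bounded below at infinity, we will work with the difference $z := w - m G_{1+\eps}$, where $G_{1+\eps}(y) := e^{-\frac{1+\eps}{4}|y|^2}$. Because $w \ge 0$ and $G_{1+\eps}\to 0$, this difference is harmless at infinity. The zeroth-order term $-\tfrac32$ in $L^*$ has the wrong sign for a maximum principle, so we will divide $z$ by an auxiliary Gaussian with a slightly slower decay rate. This produces a good sign on the potential far out.

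\emph{Step 1: $G_{1+\eps}$ is a subsolution far out.} A direct computation, the same as in the proof of Lemma~\ref{lem:Gaussian:upper} with $1-\eps$ replaced by $1+\eps$, gives
\[
L^* G_{1+\eps} = G_{1+\eps}\Bigl( \tfrac{3\eps}{2} + \tfrac{1+\eps}{2}\, U\cdot y - \tfrac{\eps(1+\eps)}{4}|y|^2 \Bigr).
\]
By~\eqref{eq:decay} we have $|U\cdot y|\le C_{U,0}$, so $L^*G_{1+\eps}\le 0$ for $|y|\ge R_1=R_1(\eps,C_{U,0})$. Consequently $L^* z = -m L^* G_{1+\eps} \ge 0$ in $\{|y|>R_1\}$, for any $m>0$.

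\emph{Step 2: removing the bad zeroth-order term.} Fix an auxiliary rate $\eta=\eps/2$, let $g:=G_{1-\eta}=e^{-\frac{1-\eta}{4}|y|^2}$, and write $z = g\zeta$. The computation leading to~\eqref{eq:zR:a}, with $\lambda_R=0$, shows that
\[
-\Delta\zeta - \bigl(U+(\eta-\tfrac12)y\bigr)\cdot\nabla\zeta + V_\eta\,\zeta = g^{-1}L^*z \ \ge 0
\]
for $|y|>R_1$. Here $V_\eta = \tfrac{\eta(1-\eta)}{4}|y|^2 - \tfrac{3\eta}{2} + \tfrac{1-\eta}{2}U\cdot y$, which is nonnegative for $|y|\ge R_2=R_2(\eps,C_{U,0})$. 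Set $R':=\max\{R_1,R_2\}$.

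\emph{Step 3: boundary values.} At infinity we have $\zeta = w/g - m\,G_{1+\eps}/g \ge -m\,e^{-\frac{\eps+\eta}{4}|y|^2}\to 0$, so $\liminf_{|y|\to\infty}\zeta\ge 0$. On the inner sphere $|y|=R'$, we use the Harnack inequality~\cite[Corollary 9.25]{GilbargTrudinger98} for the nonnegative solution $w$ of $L^*w=0$ on $B_{2R'}$. The drift and potential there are bounded in terms of $C_{U,0}$ and $R'$, so together with $w(0)=1$ this gives $\inf_{B_{R'}} w \ge c=c(\eps,C_{U,0})>0$. Choosing $m:=c$ makes $z\ge c - cG_{1+\eps}\ge 0$ on $\partial B_{R'}$, hence $\zeta\ge0$ there.

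\emph{Step 4: conclusion.} We apply the weak minimum principle on the annuli $R'<|y|<\rho$, which is valid since $V_\eta\ge 0$ and $\zeta$ is a supersolution, and then let $\rho\to\infty$. This yields $\zeta\ge -o(1)$, hence $\zeta\ge 0$, i.e.\ $w\ge m G_{1+\eps}$ for $|y|\ge R'$. Inside $B_{R'}$ we have directly $w\ge c\ge m\,G_{1+\eps}$. All constants depend only on $\eps$ and $C_{U,0}$, and in particular not on $\alpha$.

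The main obstacle is Step~3 at infinity. The naive approach, applying the minimum principle to $w/G_{1+\eps}$ as in the upper bound, fails because we have no a priori lower bound on that ratio at large $|y|$. The subtraction of $mG_{1+\eps}$ combined with the division by the slower Gaussian $g$ is designed precisely to turn the trivial information $w\ge 0$ into a usable boundary condition at infinity.
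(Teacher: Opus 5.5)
Your proposal is correct and is essentially the paper's argument. The paper works with $\phi = m e^{-\frac{\eps}{2}|y|^2} - w\, e^{\frac{1-\eps}{4}|y|^2} = -\bigl(w - m e^{-\frac{1+\eps}{4}|y|^2}\bigr) e^{\frac{1-\eps}{4}|y|^2}$. This is your $-\zeta$, with the dividing Gaussian $e^{-\frac{1-\eps}{4}|y|^2}$ in place of your $e^{-\frac{1-\eps/2}{4}|y|^2}$. The remaining steps are the same: $m$ is fixed by Harnack on $B_{R'}$, the potential is nonnegative outside that ball, and a maximum-principle argument on annuli gives the bound in the exterior. The only differences are cosmetic: the paper uses a subsolution rather than a supersolution, and it closes by contradiction at a point $y_*$ rather than by letting $\rho\to\infty$.
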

\begin{proof}[Proof of Lemma~\ref{lem:Gaussian:lower}]
For $m \in (0,1]$ to be determined, define the function 
\[
\phi(y):= m e^{-\frac{\eps}{2}|y|^2} - w(y) e^{\frac{1-\eps}{4}|y|^2} 
=  e^{-\frac{\eps}{2}|y|^2} \bigl( m - w(y) e^{\frac{1+\eps}{4}|y|^2} \bigr).
\]
The desired lower bound on $w$ is thus equivalent to the inequality $\phi\leq 0$ in $\RR^3$.
Note that the upper bound for $w$ established in Lemma~\ref{lem:pass:R:infty}, together with $m\leq 1$  directly implies that 
\[
- M \leq \phi(y) <  e^{-\frac{\eps}{2}|y|^2} 
\]
for all $y\in \RR^3$, and thus $\limsup_{|y|\to \infty}\phi(y) \leq 0$ since $\eps>0$.

We next verify that $\phi$ is a sub-solution to an elliptic PDE, on the complement of a large ball. Inspired by the computations performed in the proof of Lemma~\ref{lem:Gaussian:upper} for the function $\phi_R(y) = w_R(y) e^{\frac{1-\eps}{4}|y|^2}$, by repeating the same computations that have led to~\eqref{eq:zR:a}  we obtain that 
\begin{align*}
& -\Delta \phi - \bigl(U + (\eps-\tfrac 12) y \bigr) \cdot \nabla \phi 
+  \big(\tfrac{\eps (1-\eps)}{4}  |y|^2 - \tfrac{3\eps}{2}  + \tfrac{1-\eps}{2}  U\cdot y  \bigr) \phi
\notag\\
&\qquad = m \Bigl( -\Delta e^{-\frac{\eps}{2}|y|^2} - \bigl(U + (\eps-\tfrac 12 ) y \bigr) \cdot \nabla e^{-\frac{\eps}{2}|y|^2} 
+  \big(\tfrac{\eps (1-\eps)}{4}  |y|^2 - \tfrac{3\eps}{2}  + \tfrac{1-\eps}{2}  U\cdot y  \bigr) e^{-\frac{\eps}{2}|y|^2}\Bigr)
\notag\\
&\qquad =- m e^{-\frac{\eps}{2}|y|^2} \bigl(   \tfrac{\eps(1+ \eps)}{4}   |y|^2  
- \tfrac{1+\eps}{2}  U\cdot y -   \tfrac{3\eps}{2}   \bigr)
.
\end{align*}
We write the above equation as 
\begin{equation}
\label{eq:phi:elliptic:with:force}
-\Delta \phi - \bigl(U + (\eps -\tfrac 12) y \bigr) \cdot \nabla \phi  + V  \phi = - m e^{-\frac{\eps}{2}|y|^2} F,
\end{equation}
where $V(y) := \tfrac{\eps (1-\eps)}{4}  |y|^2 - \tfrac{3\eps}{2}  + \tfrac{1-\eps}{2}  U\cdot y $ and $F(y):= \tfrac{\eps(1+ \eps)}{4}   |y|^2  - \tfrac{1+\eps}{2}  U\cdot y - \tfrac{3\eps}{2}$. The main observation is that the coefficient of $|y|^2$ in both $V$ and $F$ are positive when $\eps \in (0,1)$. In particular, due to \eqref{eq:decay}, we have that  
\begin{equation}
\label{eq:phi:elliptic:with:force:sign}
V(y)\geq 0 \quad \mbox{and} \quad F(y)\geq 0, \qquad \mbox{for all} \quad |y| \geq R_\eps^\prime,
\end{equation} 
where 
\[
R_\eps^\prime := \sqrt{ \tfrac{ 6 }{1-\eps} + \tfrac{2 C_{U,0}}{\eps}}.
\]

With $R_\eps^\prime=R_\eps^\prime(\eps,C_{U,0})>0$ fixed, we choose the parameter $m$ as follows. Since $L^* w = 0$ in $B_{2R_{\eps}^\prime}$ and the coefficients of the operator $L^*$ are uniformly bounded on $B_{2R_{\eps}^\prime}$ solely in terms of $\eps$ and $C_{U,0}$, by the Harnack inequality we obtain that there exists $m = m(\eps,C_{U,0}) \in (0,1] $ such that 
\[
 \sup_{B_{R_\eps^\prime}} w \leq m^{-1} \inf_{B_{R_\eps^\prime}} w.
\]
Since $w(0)=1$ and $w$ is continuous, we deduce that
\[
w(y) \geq m \qquad \mbox{for all} \qquad y \in \overline{B_{R_\eps^\prime}}.
\]
Therefore, by the definition of $\phi$ we have that 
\[
\phi(y) 
=
e^{-\frac{\eps}{2}|y|^2} \bigl( m - w(y) e^{\frac{1+\eps}{4}|y|^2} \bigr)
\leq 
e^{-\frac{\eps}{2}|y|^2}  ( m - w(y))
\leq 
0
\qquad \mbox{for all} \qquad y \in \overline{B_{R_\eps^\prime}}.
\]

At last, we use~\eqref{eq:phi:elliptic:with:force} and~\eqref{eq:phi:elliptic:with:force:sign} to prove that $\phi\leq 0$ also for $|y|> R_\eps^\prime$. Assume by contradiction that there exists $y_*$ with $|y_*|> R_\eps^\prime$ such that $\phi(y_*) > 0$. Pick $R$ large enough so that $e^{-\frac{\eps}{2} R^2} < \phi(y_*)$ and $R > |y_*|$. On the annulus $R_\eps^\prime< |y| < R$, \eqref{eq:phi:elliptic:with:force}--\eqref{eq:phi:elliptic:with:force:sign} shows that $\phi$ is a sub-solution of the uniformly elliptic operator 
$-\Delta - \bigl(U + (\eps -\tfrac 12) y \bigr) \cdot \nabla + V$, and $V\geq 0$ on this set. By the weak maximum principle and our a priori global bound for $\phi$, we have that 
\[
\phi(y_*) \leq \max_{R_\eps^\prime \leq |y| \leq R} \phi(y) = \max_{\{ |y| = R_{\eps}^\prime \} \cup \{|y|=R \} } \phi(y) 
\leq e^{-\frac{\eps}{2} R^2}
< \phi(y_*),
\]
a contradiction. This proves that $\phi \leq 0$ on $B_{R_\eps^\prime}^\complement$, and thus concludes the proof of the Lemma.
\end{proof}

\begin{proof}[Proof of Proposition~\ref{prop:weight:construction}]
The existence of a $C^2$ smooth function $w$ lying in the kernel of $L^*$ is established in Lemma~\ref{lem:pass:R:infty}. The upper bound stated in~\eqref{eq:weight:decay} is established as part of Lemma~\ref{lem:pass:R:infty}, while the lower bound stated  in~\eqref{eq:weight:decay} is established as part of Lemma~\ref{lem:Gaussian:lower}. 
\end{proof}

\section{RSS: proof of the main result for $\alpha$ large}
\label{sec:alpha:large}
The goal is to prove that if $|\alpha|$ is large enough,  $U$ is close to the kernel of the operator $J  -  Jy\cdot\nabla$. That is, $U$ is almost axisymmetric. If $U$ were truly axisymmetric, then because of~\eqref{eq:E:is:theta:derivative} we'd have $E \equiv 0$. Instead, when $|\alpha|\gg 1$ we show that $\alpha E$ is small, from which~\eqref{eq:small:enstrophy} follows, and we conclude the proof of $U\equiv 0$ with Proposition~\ref{prop:local:vort}.

\subsection{Cylindrical coordinates, rotation operator, angular mean}
We write a Cartesian point $y=(y_1,y_2,y_3) \in \RR^3$ in cylindrical coordinates as $y = (r\cos\theta,r\sin\theta,z)$, where $r = \sqrt{y_1^2+y_2^2} \geq 0$, $z = y_3 \in \RR$, and $\theta \in [0,2\pi)$. As usual, we denote the cylindrical coordinate frame by $e_r = (\cos\theta,\sin\theta,0)$, $e_\theta=(-\sin\theta,\cos\theta,0)$, and $e_z=(0,0,1)$.\footnote{It is convenient to record the identities: $\p_\theta e_r = e_\theta$, $\p_\theta e_\theta = - e_r$, $\p_\theta e_z = 0$.} 
Vector fields $V \colon \RR^3 \to \RR^3$ are written as 
\[
V(y) = V^r (r,\theta,z) e_r + V^\theta(r,\theta,z) e_\theta + V^z(r,\theta,z) e_z.
\]

We recall that the operator $J$ defined in \eqref{eq:RJ} acts on vector fields $V = (V_1,V_2,V_3)$ as $J V = (-V_2,V_1,0)$. In cylindrical coordinates, this identity is
\[
JV  = - V^\theta   e_r + V^r  e_\theta .
\]
Specializing to $V(y) = y = r e_r + z e_z$, and using that $\nabla = e_r \p_r + \frac{1}{r} e_\theta \p_\theta + e_z \p_z$, we deduce that 
\[
Jy = r e_\theta,
\qquad
(Jy) \cdot \nabla = \p_\theta.
\]
In particular, for a vector field $V \colon \RR^3 \to \RR^3$, we have 
\[
\bigl( (Jy) \cdot \nabla \bigr) V  = \bigl( \p_\theta V^r - V^\theta \bigr) e_r + \bigl( \p_\theta  V^\theta + V^r \bigr)   e_\theta + \p_\theta V^z   e_z.
\]

We define  
\begin{equation}
\mathcal{R} V := J V - (J y) \cdot \nabla V, 
\label{eq:cal:R:def}
\end{equation}
to be the operator appearing in~\eqref{eq:profile} and~\eqref{eq:vort} with amplification coefficient $\alpha$. In cylindrical coordinates, the operator $\mathcal{R}$ acts as
\begin{equation}
 \mathcal{R} V = - \p_\theta V^r e_r - \p_\theta V^\theta e_\theta - \p_\theta V^z e_z.
 \label{eq:rotation:operator}
\end{equation}
That is, $\mathcal{R}$ acts on vector fields by applying $-\p_\theta$ to each cylindrical component separately. 

Next, for any scalar function $f\colon \RR^3 \to \RR$ we define 
\begin{equation}
\label{eq:f:mean:theta}
\mean{f}_\theta := \mean{f}_\theta(r,z) = \frac{1}{2\pi} \int_0^{2\pi} f(r,\theta,z) d\theta.
\end{equation}
For a vector field $V\colon \RR^3 \to \RR^3$, we accordingly define 
\begin{equation}
\label{eq:V:mean:theta}
\mean{V}_\theta := \mean{V^r}_\theta(r,z) e_r + \mean{V^\theta}_\theta(r,z) e_\theta + \mean{V^z}_\theta(r,z) e_z.
\end{equation}
Note that a vector field $V$ is axisymmetric \emph{if and only if} $V = \mean{V}_\theta$. Thus, in order to measure how far a vector field is from being axisymmetric, we introduce the notation
\begin{equation}
\label{eq:V:away:from:mean}
(V)_a := V - \mean{V}_\theta = (V^r)_a(r,\theta,z) e_r + (V^\theta)_a(r,\theta,z) e_\theta + (V^z)_a(r,\theta,z) e_z,
\end{equation}
where for any scalar function $f\colon \RR^3 \to \RR$, we have denoted 
\begin{equation}
\label{eq:f:away:from:mean}
(f)_a = f - \mean{f}_\theta.
\end{equation}
With this notation $V$ is axisymmetric \emph{if and only if} $(V)_a = 0$. We will say that  \emph{$V$ is nearly axisymmetric} if $ (V)_a \ll 1$ (in a suitable topology). We record a few useful facts.
\begin{lemma}
\label{lem:axisym:properties}
Let $f,g\colon \RR^3\to \RR$ be smooth scalar functions and $V\colon \RR^3 \to \RR^3$ be a smooth vector field.  Then, we have:
\begin{enumerate}[label=(\roman*)]
\item $\mathcal{R} \mean{V}_\theta = 0$ and hence  $\mathcal{R} V = \mathcal{R}(V)_a$; moreover, $\mean{\mathcal{R} V}_\theta = 0$ and hence $(\mathcal{R} V)_a = \mathcal{R}(V)_a$.
\item $\mean{\p_r f}_\theta = \p_r \mean{f}_\theta$ and $\mean{\p_z f}_\theta = \p_z \mean{f}_\theta$; therefore, $(\p_r f)_a = \p_r (f)_a$ and $(\p_z f)_a = \p_z (f)_a$.
\item $(\p_\theta f)_a = \p_\theta (f)_a$ and hence $(\nabla f)_a = \nabla (f)_a$.
\item $(\nabla \cdot V)_a = \nabla \cdot (V)_a$ and hence  $(\Delta f)_a = \Delta (f)_a$.
\item $(f \, g)_a = (f)_a \mean{g}_\theta + \mean{f}_\theta (g)_a + \bigl( (f)_a (g)_a \bigr)_a$.
\item $\bigl( (V\cdot \nabla) f \bigr)_a
= \mean{V}_\theta \cdot \nabla (f)_a 
+ (V^r)_a \p_r \mean{f}_\theta + (V^z)_a \p_z \mean{f}_\theta
+ \bigl( ( (V)_a \cdot \nabla) (f)_a \bigr)_a$.
\end{enumerate}
\end{lemma}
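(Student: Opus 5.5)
The plan is to verify each item directly in cylindrical coordinates $(r,\theta,z)$. Everything rests on two facts: $\mean{\cdot}_\theta$ is an average over $\theta$ at fixed $(r,z)$, and the frame vectors $e_r,e_\theta,e_z$ are axisymmetric objects, in the sense that their Cartesian components depend only on $\theta$. This also means that for vector fields the averaging must be done componentwise in the cylindrical frame, as in \eqref{eq:V:mean:theta}.

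For (i), I will use \eqref{eq:rotation:operator}. Since $\mean{V}_\theta$ has cylindrical components independent of $\theta$, applying $-\p_\theta$ componentwise gives zero, so $\mathcal{R}\mean{V}_\theta=0$. Linearity then gives $\mathcal{R}V=\mathcal{R}(V)_a$. Next, $\mean{\p_\theta f}_\theta=\frac{1}{2\pi}\int_0^{2\pi}\p_\theta f\,d\theta=0$ by periodicity, so $\mean{\mathcal{R}V}_\theta=0$. It follows that $(\mathcal{R}V)_a=\mathcal{R}V=\mathcal{R}(V)_a$.

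Item (ii) follows by differentiating under the integral sign in \eqref{eq:f:mean:theta}. For (iii), we have $\p_\theta\mean{f}_\theta=0$ and $\mean{\p_\theta f}_\theta=0$, which gives $(\p_\theta f)_a=\p_\theta f=\p_\theta(f)_a$. Combining this with (ii) and $\nabla=e_r\p_r+\frac1r e_\theta\p_\theta+e_z\p_z$, the cylindrical components of $\nabla f$ are $\p_r f$, $\frac1r\p_\theta f$ and $\p_z f$. Since $\frac1r$ does not depend on $\theta$, taking $(\cdot)_a$ of each component yields $(\nabla f)_a=\nabla(f)_a$.

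For (iv), I write $\nabla\cdot V=\frac1r\p_r(rV^r)+\frac1r\p_\theta V^\theta+\p_z V^z$. The operator $(\cdot)_a$ commutes with multiplication by functions of $(r,z)$ and with $\p_r$, $\p_\theta$, $\p_z$. Hence $(\nabla\cdot V)_a=\nabla\cdot(V)_a$, where $(V)_a$ is taken componentwise in the cylindrical frame as in \eqref{eq:V:away:from:mean}. Applying this with $V=\nabla f$ together with (iii) gives $(\Delta f)_a=\Delta(f)_a$.

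For (v), I expand
\[
fg=\bigl(\mean{f}_\theta+(f)_a\bigr)\bigl(\mean{g}_\theta+(g)_a\bigr)
\]
and take $(\cdot)_a$ of each of the four terms. First, $(\mean{f}_\theta\mean{g}_\theta)_a=0$. Second, because $\mean{f}_\theta$ is $\theta$-independent and $\mean{(g)_a}_\theta=0$, we get $(\mean{f}_\theta(g)_a)_a=\mean{f}_\theta(g)_a$, and symmetrically for the other cross term. The product term $((f)_a(g)_a)_a$ remains as it is.

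For (vi), I write $(V\cdot\nabla)f=V^r\p_rf+\frac{1}{r}V^\theta\p_\theta f+V^z\p_zf$ and apply (v) to each product, using (ii) and (iii). The terms containing $\mean{V}_\theta$ combine into $\mean{V}_\theta\cdot\nabla(f)_a$. The terms with $(V^r)_a$ or $(V^z)_a$ multiplied by the mean of the derivative give $(V^r)_a\p_r\mean{f}_\theta+(V^z)_a\p_z\mean{f}_\theta$. In the angular product, the corresponding term is absent because $\mean{\p_\theta f}_\theta=0$. The fluctuation–fluctuation terms assemble into $\bigl(((V)_a\cdot\nabla)(f)_a\bigr)_a$.

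The only point needing care is item (iv) and, behind it, the vector-field bookkeeping. I expect this to be the main obstacle: one must make sure that angular averaging of vector fields is performed in the rotating frame, so that derivatives of the frame vectors do not produce extra terms. I will handle this by working entirely with the scalar cylindrical component formulas above. A secondary issue is smoothness at the axis $r=0$, where cylindrical coordinates degenerate. I will treat it by first proving the identities for $r>0$ and then extending them to $r=0$ by continuity, since both sides are continuous.
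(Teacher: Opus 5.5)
Your proposal is correct and follows essentially the same route as the paper. Both arguments verify each identity componentwise in cylindrical coordinates: the angular average commutes with $\p_r$, $\p_z$ and with multiplication by $\theta$-independent functions, kills $\p_\theta$-derivatives by periodicity, and the product rule (v) is then fed into the cylindrical formulas for $\nabla f$, $\nabla\cdot V$ and $(V\cdot\nabla) f$. Your remark about extending the identities to the axis $r=0$ by continuity is a point the paper leaves implicit.
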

\begin{proof}[Proof of Lemma~\ref{lem:axisym:properties}]
Since $\mathcal{R}$ acts on vector fields by applying $-\p_\theta$ to each of its cylindrical components separately, item \emph{(i)} follows directly from the definition~\eqref{eq:V:mean:theta}.

The claims in item \emph{(ii)} follow directly from the definitions~\eqref{eq:f:mean:theta} and~\eqref{eq:f:away:from:mean}.

Note that $\mean{\p_\theta f}_\theta =0$, and thus $(\p_\theta f)_a = \p_\theta f$. Since $\p_\theta
\mean{f}_\theta = 0$, it follows that $\p_\theta (f)_a = \p_\theta f$, proving the first claim in \emph{(iii)}. 
Since $\nabla f= e_r \p_r f+ e_\theta \frac{1}{r}  \p_\theta f + e_z \p_z f $, from item \emph{(ii)},  \eqref{eq:V:away:from:mean}, and the previously established identity,  we deduce $(\nabla f)_a = e_r (\p_r f)_a + e_\theta (\frac{1}{r}  \p_\theta f)_a + e_z (\p_z f)_a =  e_r \p_r (f)_a + e_\theta \frac{1}{r}  \p_\theta (f)_a + e_z\p_z  (f)_a = \nabla (f)_a$.

In cylindrical coordinates we have $\nabla \cdot V = \p_r V^r + \frac{1}{r} V^r + \frac{1}{r} \p_\theta V^\theta + \p_z V^z$, and thus $\mean{\nabla \cdot V}_\theta = \mean{\p_r V^r}_\theta + \mean{\frac{1}{r} V^r}_\theta  + \mean{\p_z V^z}_\theta =\p_r   \mean{V^r}_\theta + \frac{1}{r} \mean{ V^r}_\theta  + \p_z  \mean{V^z}_\theta = \nabla \cdot \mean{V}_\theta$; in the last equality we have used that $\p_\theta \mean{f}_\theta = 0$. The first claim in item \emph{(iv)} now follows from~\eqref{eq:V:away:from:mean} and \eqref{eq:f:away:from:mean}. The second claim follows by letting $V = \nabla f$, and using the second claim in item~\emph{(iii)}.

The  claim in item~\emph{(v)} follows from $(f \, g)_a 
= \bigl( \mean{f}_\theta \mean{g}_\theta + (f)_a \mean{g}_\theta + \mean{f}_\theta (g)_a + (f)_a (g)_a \bigr)_a$ together with the fact that functions independent of $\theta$ lie in the kernel of $(\cdot)_a$ and the fact that multiplication by a function independent of $\theta$ commutes with the operation $(\cdot)_a$.

Finally, item~\emph{(vi)} follows from the identity $V\cdot\nabla f = V^r \p_r f+ \frac 1r V^\theta \p_\theta f + V^z \p_z f$, and  item~\emph{(v)}.
\end{proof}

\subsection{Gaussian weighted $L^2$}
The velocity~\eqref{eq:profile} and vorticity~\eqref{eq:vort} equations both contain the elliptic operator $- \Delta + \frac 12 y \cdot \nabla$, which is not self-adjoint on $L^2(\RR^3)$. In order to recover this favorable property (which will be useful in the analysis of this section), we work with the Gaussian weighted space $L^2_\mu(\RR^3)$, defined as follows. Let
\[
\mu = \mu(y) = e^{-\frac 14 |y|^2} = e^{-\frac 14 (r^2 + z^2)}.
\] 
For smooth vector fields $V,G\colon \RR^3 \to \RR^3$ we define  \begin{equation}
\langle V, G\rangle_{L^2_\mu} : = \int_{\RR^3} V(y) \cdot G(y) \, \mu(y) \, dy 
=  \int_{\RR} \int_0^\infty  \mean{V\cdot G}_\theta(r,z) \mu(r,z) 2\pi r dr dz,
\quad 
\| V\|_{L^2_\mu}^2 = \langle V, V\rangle_{L^2_\mu},
\label{eq:L2:mu:def}
\end{equation}
while for smooth scalar functions $f,g \colon \RR^3 \to \RR$ we define
\begin{equation}
\langle f , g\rangle_{L^2_\mu} : = \int_{\RR^3} f(y) g(y) \, \mu(y) \, dy 
=  \int_{\RR} \int_0^\infty  \mean{f \, g}_\theta(r,z) \, \mu(r,z) \, 2\pi r \, dr dz,
\quad 
\| f\|_{L^2_\mu}^2 = \langle f, f\rangle_{L^2_\mu}.
\label{eq:L2:mu:def:scalar}
\end{equation}
With these definitions, we record a few useful facts.

\begin{lemma}
\label{lem:useful:inner:product}
Let $f,g \colon \RR^3 \to \RR$ be  smooth scalar functions and $V, G \colon \RR^3 \to \RR^3$ be  smooth vector fields. Then we have:
\begin{enumerate}[label=(\roman*)]
\item The operator $- \Delta + \frac 12 y \cdot \nabla$ is self-adjoint on $L^2_\mu(\RR^3)$ and 
$\langle (- \Delta + \tfrac 12 y \cdot \nabla) V , V\rangle_{L^2_\mu} 
= 
\|\nabla V\|_{L^2_\mu}^2$.
\item $\langle \mathcal{R} V, G \rangle_{L^2_\mu} = - \langle  V, \mathcal{R} G \rangle_{L^2_\mu}$ and $\langle \mathcal{R} V, G \rangle_{L^2_\mu} = \langle \mathcal{R} V, (G)_a \rangle_{L^2_\mu}$.
\item $\langle (f)_a, \mean{g}_\theta\rangle_{L^2_\mu} = 0$ and $\langle (V)_a, \mean{G}_\theta \rangle_{L^2_\mu} = 0$; hence 
\[
\|f\|_{L^2_\mu}^2 = \|\mean{f}_\theta \|_{L^2_\mu}^2 + \|(f)_a\|_{L^2_\mu}^2, 
\qquad \mbox{and} \qquad 
\|V\|_{L^2_\mu}^2 = \|\mean{V}_\theta \|_{L^2_\mu}^2 + \|(V)_a\|_{L^2_\mu}^2.
\]
\item The Poincar\'e inequality in the angular variable $\theta \in [0,2\pi)$ implies that
\begin{equation}
\| (V)_a \|_{L^2_\mu} \leq 2 \| \mathcal{R} V \|_{L^2_\mu}.
\label{eq:theta:Poincare}
\end{equation}
\item $\| f g \|_{L^2_\mu} \leq \|f\|_{L^\infty} \|g\|_{L^2_\mu}$.
\end{enumerate}
\end{lemma}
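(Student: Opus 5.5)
The five items are elementary; the plan is to verify each by direct computation in cylindrical coordinates, using that the Gaussian weight $\mu$ depends only on $(r,z)$. This is because $\mu$ is independent of $\theta$, and $\partial_\theta$ is the divergence-free vector field $Jy\cdot\nabla$, so $\nabla\cdot(Jy)=0$.

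For \emph{(i)}, note that $\nabla\mu = -\tfrac12 y\,\mu$. Hence, for each component, $\mu(-\Delta + \tfrac12 y\cdot\nabla)V = -\nabla\cdot(\mu\nabla V)$. Integrating by parts against $G\mu$ gives
\[
\langle(-\Delta+\tfrac12 y\cdot\nabla)V,G\rangle_{L^2_\mu}=\int\nabla V:\nabla G\,\mu\,dy.
\]
This expression is symmetric in $V$ and $G$, and setting $G=V$ gives the stated identity. The boundary terms vanish for fields of at most polynomial growth because of the Gaussian decay of $\mu$. In our applications $U$ and its derivatives are bounded by Lemma~\ref{lem:gradient}, so this is harmless.

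For \emph{(ii)}, I use~\eqref{eq:rotation:operator}, which says $\mathcal{R}$ applies $-\partial_\theta$ to each cylindrical component. Since $e_r,e_\theta,e_z$ are orthonormal, $V\cdot G=\sum_c V^cG^c$. Then
\[
\langle\mathcal{R}V,G\rangle_{L^2_\mu}=-\int\sum_c\partial_\theta V^c\,G^c\,\mu\,dy .
\]
Integrating by parts in $\theta$ over the periodic interval $[0,2\pi)$, with $\mu\,r$ independent of $\theta$, yields $-\langle V,\mathcal{R}G\rangle_{L^2_\mu}$. For the second claim, Lemma~\ref{lem:axisym:properties}(i) gives $\mathcal{R}\mean{G}_\theta=0$. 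So
\[
\langle\mathcal{R}V,\mean{G}_\theta\rangle_{L^2_\mu}=-\langle V,\mathcal{R}\mean{G}_\theta\rangle_{L^2_\mu}=0,
\]
and subtracting this from $\langle\mathcal{R}V,G\rangle_{L^2_\mu}$ gives $\langle\mathcal{R}V,(G)_a\rangle_{L^2_\mu}$.

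For \emph{(iii)}, use the $\theta$-averaged form of the inner product in~\eqref{eq:L2:mu:def:scalar}. Since $\mean{g}_\theta$ does not depend on $\theta$, $\mean{(f)_a\mean{g}_\theta}_\theta=\mean{g}_\theta\mean{(f)_a}_\theta=0$. The vector case is the same computation applied componentwise in the cylindrical frame. The Pythagorean identities then follow by expanding $f=\mean f_\theta+(f)_a$ and $V=\mean V_\theta+(V)_a$.

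For \emph{(iv)}, fix $(r,z)$ and apply Wirtinger's inequality on the circle to each component $(V^c)_a$, which has zero $\theta$-mean:
\[
\int_0^{2\pi}|(V^c)_a|^2d\theta\le\int_0^{2\pi}|\partial_\theta V^c|^2d\theta .
\]
Here I used $\partial_\theta(V^c)_a=\partial_\theta V^c$. Summing over the components, multiplying by $\mu\,2\pi r$ and integrating in $(r,z)$ gives $\|(V)_a\|_{L^2_\mu}\le\|\mathcal{R}V\|_{L^2_\mu}$, which is stronger than the stated constant $2$.

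Item \emph{(v)} is immediate from the pointwise bound $|fg|^2\mu\le\|f\|_{L^\infty}^2|g|^2\mu$.

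The only point requiring care is the justification of the integrations by parts in \emph{(i)}, that is, the decay at infinity. This is handled by truncating to $B_R$ and letting $R\to\infty$, using the Gaussian factor $\mu$ to kill the boundary terms for fields of polynomial growth.
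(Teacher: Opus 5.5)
Your proposal is correct and follows essentially the same route as the paper: integration by parts against the $\theta$-independent Gaussian weight $\mu$ for (i)--(ii), with the second claim of (ii) coming from $\mathcal{R}\mean{G}_\theta=0$; the vanishing angular mean of $(f)_a$ for (iii); and Wirtinger's inequality applied componentwise in the cylindrical frame for (iv). The only difference is that you use the sharp Wirtinger constant and obtain $\|(V)_a\|_{L^2_\mu}\le\|\mathcal{R}V\|_{L^2_\mu}$, whereas the paper uses the non-sharp bound $\|f\|_{L^2(0,2\pi)}^2\le 4\|\partial_\theta f\|_{L^2(0,2\pi)}^2$, which is where its factor $2$ comes from; your bound implies the stated one.
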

\begin{proof}[Proof of Lemma~\ref{lem:useful:inner:product}]
Item \emph{(i)} follows by the very definition of the Gaussian weight $\mu$. 

By~\eqref{eq:rotation:operator}, \eqref{eq:f:mean:theta}, and integration by parts, we have that $\mean{\mathcal{R} V \cdot G}_\theta = -\mean{\p_\theta V^r G^r + \p_\theta V^\theta G^\theta + \p_\theta V^z G^z}_\theta = \mean{V^r \p_\theta G^r + V^\theta \p_\theta  G^\theta + V^z \p_\theta  G^z}_\theta = - \mean{V \cdot \mathcal{R} G}_\theta$. The first claim in item \emph{(ii)} now follows by  using definition~\eqref{eq:L2:mu:def}. The second claim follows by additionally appealing to item~\emph{(i)} of Lemma~\ref{lem:axisym:properties}.

By~\eqref{eq:L2:mu:def:scalar}, we have that $\mean{ (f)_a \mean{g}_\theta}_\theta = \mean{ (f)_a }_\theta \mean{g}_\theta = 0$, proving the first claim in item~\emph{(iii)}. The second claim follows by applying the first one to the $e_r$, $e_\theta$, and $e_z$ components of $V$ and $G$ separately. The claims about the orthogonal decomposition follow from \eqref{eq:f:mean:theta}--\eqref{eq:f:away:from:mean} and the bilinearity of the inner product.

In order to prove~\eqref{eq:theta:Poincare}, we appeal to~\eqref{eq:V:away:from:mean}, \eqref{eq:f:away:from:mean}, \eqref{eq:rotation:operator}, and the Poincar\'e-Wirtinger inequality in $\theta$ (applicable since $(f)_a$ has zero mean on the circle for any scalar function $f$), to conclude that for every fixed $(r,z) \in \RR_+ \times \RR$ we have   
\begin{align*}
\mean{(V)_a \cdot (V)_a}_\theta
&= 
\mean{(V^r)_a^2}_\theta
+
\mean{(V^\theta)_a^2}_\theta
+
\mean{(V^z)_a^2}_\theta
\\
&=
\tfrac{1}{2\pi} \|(V^r)_a\|_{L^2(0,2\pi)}^2 
+
\tfrac{1}{2\pi} \|(V^\theta)_a\|_{L^2(0,2\pi)}^2 
+
\tfrac{1}{2\pi} \|(V^z)_a\|_{L^2(0,2\pi)}^2 
\\
&\leq 
\tfrac{2}{\pi}\|\p_\theta (V^r)_a\|_{L^2(0,2\pi)}^2
+
\tfrac{2}{\pi}\|\p_\theta (V^\theta)_a\|_{L^2(0,2\pi)}^2
+
\tfrac{2}{\pi}\|\p_\theta (V^z)_a\|_{L^2(0,2\pi)}^2
\\
&= 
\tfrac{2}{\pi}\|\p_\theta  V^r \|_{L^2(0,2\pi)}^2
+
\tfrac{2}{\pi}\|\p_\theta  V^\theta \|_{L^2(0,2\pi)}^2
+
\tfrac{2}{\pi}\|\p_\theta  V^z \|_{L^2(0,2\pi)}^2
\\
&= 
4  \mean{\mathcal{R} V \cdot \mathcal{R} V}_\theta.
\end{align*}
The proof of~\eqref{eq:theta:Poincare} now follows from the above estimate and definition~\eqref{eq:L2:mu:def}.

The bound in item~\emph{(v)} follows from the definition~\eqref{eq:L2:mu:def:scalar} and H\"older's inequality.
\end{proof} 

\subsection{Estimates for the nonlinearity and the pressure gradient}
Inspecting the velocity equation~\eqref{eq:profile:a} we note that all terms except $(U\cdot \nabla) U$ and $\nabla P$ are linear in $U$. We estimate these terms as follows:
\begin{lemma}
\label{lem:source:bound}
Assume that the smooth incompressible vector field $U\colon \RR^3 \to \RR^3$ satisfies the bounds~\eqref{eq:decay} and~\eqref{eq:decay:grad}.
Define
\begin{equation}
\mathcal{N} := - (U \cdot \nabla) U - \nabla P,
\label{eq:source:def}
\end{equation}
where $P = R_i R_j (U^i U^j)$ (as in the proof of Lemma~\ref{lem:gradient}). For any $\eps \in (0,1]$, there exists a constant $C_\eps = C_\eps(\eps,C_{U,0})>0$ such that 
\begin{equation}
\| (\mathcal{N})_a \|_{L^2_\mu} \leq \eps + C_\eps   \|(U)_a\|_{L^2_\mu} + C_\eps   \| \nabla (U)_a \|_{L^2_\mu}  .
\label{eq:source:bound}
\end{equation}
\end{lemma}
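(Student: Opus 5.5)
We split $\mathcal{N}$ into the transport part and the pressure part and treat each with the angular-decomposition calculus of Lemma~\ref{lem:axisym:properties}. For the transport term we write $(U\cdot\nabla)U$ componentwise in cylindrical coordinates, i.e.\ apply item (vi) to each cylindrical component $f=U^r,U^\theta,U^z$. The terms $U^\theta U^\theta/r$ and $U^r U^\theta/r$ coming from the moving frame are handled with item (v). This gives
\[
\bigl((U\cdot\nabla)U\bigr)_a = (\mean{U}_\theta\cdot\nabla)(U)_a + ((U)_a\cdot\nabla)\mean{U}_\theta + \bigl(((U)_a\cdot\nabla)(U)_a\bigr)_a .
\]
Since $\mean{U}_\theta$ and $\nabla\mean{U}_\theta$ are bounded by $C_{U,0}$ and $C_{U,1}$ (averaging preserves the pointwise bounds~\eqref{eq:decay}, \eqref{eq:decay:grad}), item (v) of Lemma~\ref{lem:useful:inner:product} bounds the first two terms by $C(\|\nabla(U)_a\|_{L^2_\mu}+\|(U)_a\|_{L^2_\mu})$. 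For the quadratic term, $|(U)_a|\le 2C_{U,0}$ pointwise, so it is bounded by $2C_{U,0}\|\nabla (U)_a\|_{L^2_\mu}$. The $(\cdot)_a$ projection is a contraction in $L^2_\mu$ by item (iii). More directly, one may simply note $(U\cdot\nabla U)_a = (U\cdot\nabla)(U)_a + ((U)_a\cdot\nabla)\mean{U}_\theta$ minus an average, and bound each piece using $L^\infty$ bounds on $U$ and $\nabla \mean{U}_\theta$. The apparent $1/r$ singularity is avoided by working with Cartesian components, noting that $(\cdot)_a$ commutes with $\nabla$ by items (iii)--(iv).

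The pressure term is the main obstacle, since $P$ is nonlocal and the Gaussian weight is not compatible with Riesz transforms. Rotations about the $e_3$ axis commute with $R_iR_j$, so $(\nabla P)_a = \nabla (P)_a$, and $(P)_a$ is the Riesz-transform expression of $(U\otimes U)_a = \mean{U}_\theta\otimes (U)_a + (U)_a\otimes\mean{U}_\theta + ((U)_a\otimes(U)_a)_a$. Equivalently, $(P)_a$ solves $-\Delta (P)_a = \partial_i\partial_j (U^iU^j)_a$. The plan is a cutoff splitting. Fix $\rho=\rho(\eps,C_{U,0})$ large, and let $\chi$ be a smooth cutoff equal to $1$ on $B_{2\rho}$ and supported in $B_{4\rho}$.

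\emph{Far field.} Since $|\nabla P(y)|\lesssim C(1+|y|)^{-2}$ (from the kernel representation with~\eqref{eq:decay:grad}, as in Lemma~\ref{lem:gradient}), the contribution $\|\mathbf 1_{|y|>\rho}(\nabla P)_a\|_{L^2_\mu}\lesssim e^{-\rho^2/16}\le\eps/2$.

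\emph{Near field.} On $B_\rho$ we write $\nabla(P)_a = \nabla\nabla\cdot\nabla\cdot(-\Delta)^{-1}[\chi (U\otimes U)_a] + h$. Here $h$ is harmonic on $B_{2\rho}$ and generated by the tensor supported in $|y|>2\rho$. The first piece is bounded in $L^2(\RR^3)$ by $\|\nabla(\chi(U\otimes U)_a)\|_{L^2}$, using one derivative and Calder\'on--Zygmund. In turn this is bounded by $C(\rho)(\|(U)_a\|_{L^2_\mu}+\|\nabla(U)_a\|_{L^2_\mu})$, since $\mu\ge e^{-4\rho^2}$ on $B_{4\rho}$ and all coefficients are bounded. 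For the harmonic remainder $h$, interior estimates bound it on $B_\rho$ by $\sup_{|x|>2\rho}$-type kernel tails. These tails are $\lesssim \rho^{-\delta}$ from the decay~\eqref{eq:decay} of $U\otimes U$, and are made $\le \eps/2$ (times $\|\mu\|_{L^1}^{1/2}$) by choosing $\rho$ large. This tail smallness is exactly why the statement carries the additive $\eps$ rather than a purely linear bound. Collecting the pieces gives~\eqref{eq:source:bound} with $C_\eps$ depending on $\rho(\eps,C_{U,0})$.
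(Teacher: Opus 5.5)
Your proof is correct. For the transport term it is essentially the paper's argument: split $U=\mean{U}_\theta+(U)_a$, note that $(\mean{U}_\theta\cdot\nabla)\mean{U}_\theta$ is axisymmetric and so killed by $(\cdot)_a$, then use $L^\infty$ bounds on $U$, $(U)_a$ and $\nabla\mean{U}_\theta$.

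Your aside about working with ``Cartesian components'' is slightly off. The operation $(\cdot)_a$ is defined through cylindrical components, and averaging Cartesian components would annihilate $e_r$. Nothing depends on this, provided you read $(\cdot)_a$ as subtracting the rotation average $\frac{1}{2\pi}\int_0^{2\pi}R(-\phi)V(R(\phi)y)\,d\phi$.

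For the pressure you take a genuinely different route from the paper, and a heavier one.

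\emph{The paper's route.} It uses $\nabla\cdot U=0$ to rewrite $\partial_i\partial_j(U^iU^j)=\nabla\cdot((U\cdot\nabla)U)$. This gives $(\nabla P)_a=R_iR_jV^j$ with $V=((U\cdot\nabla)U)_a$, which is exactly the quantity already estimated in the first step. Then $\mu\le1$ and the $L^2$-boundedness of $R_iR_j$ give $\|(\nabla P)_a\|_{L^2_\mu}\le\|V\|_{L^2}$. Splitting at a radius $L_\eps$ gives $\|V\|_{L^2}\le e^{L_\eps^2/8}\|V\|_{L^2_\mu}+C\,C_{U,0}C_{U,1}L_\eps^{-3/2}$, using only the pointwise decay $|V|\les(1+|y|)^{-3}$. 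So the additive $\eps$ comes from the unweighted $L^2$ tail of the source. No cutoff, harmonic remainder or separate far-field estimate is needed.

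\emph{Your route.} You localize the tensor $(U\otimes U)_a$, apply Calder\'on--Zygmund to the compactly supported piece, bound the harmonic remainder on $B_\rho$ by a kernel estimate of size $\les C_{U,0}^2\rho^{-3}$, and use the Gaussian tail outside $B_\rho$. This also closes, with $C_\eps\sim e^{c\rho^2}$, comparable to the paper's $e^{L_\eps^2/8}$. It does need several extra justifications:
\begin{itemize}
\item the tensor-valued angular average must commute with $R_iR_j$;
\item $\nabla(\chi(U\otimes U)_a)$ must be controlled by $(U)_a$ and $\nabla(U)_a$ in $L^2_\mu$, including the averaged quadratic term;
\item you need a pointwise bound on $\nabla P$, which Lemma~\ref{lem:gradient} does not record for RSS profiles.
\end{itemize}
The last point is easily avoided. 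Use $\|\mathbf{1}_{|y|>\rho}(\nabla P)_a\|_{L^2_\mu}\le e^{-\rho^2/8}\|\nabla P\|_{L^2}\les e^{-\rho^2/8}C_{U,0}C_{U,1}$ instead.

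In this global setting your decomposition buys nothing extra. The paper's reduction of the pressure to the already-bounded transport term is the more economical choice.
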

\begin{proof}[Proof of Lemma~\ref{lem:source:bound}]
Let us first analyze the nonlinear term $(U\cdot \nabla) U$. Since $U = \mean{U}_\theta + (U)_a$, we may decompose
\[
(U \cdot \nabla) U =  (\mean{U}_\theta \cdot \nabla) \mean{U}_\theta  + (\mean{U}_\theta \cdot \nabla) (U)_a + ((U)_a  \cdot \nabla)\mean{U}_\theta  + ((U)_a  \cdot \nabla) (U)_a.
\]
Note that the term $(\mean{U}_\theta \cdot \nabla) \mean{U}_\theta$ is axisymmetric, meaning that its cylindrical components are independent of $\theta$.\footnote{To see this note that $(V\cdot \nabla) V =  ( (V\cdot\nabla)V^r - \tfrac 1r (V^\theta)^2 ) e_r + ( (V\cdot\nabla)V^\theta + \tfrac 1r V^r V^\theta) e_\theta + ( (V\cdot\nabla)V^z) e_z$, and also $V\cdot\nabla = V^r \p_r + \frac 1r V^\theta \p_\theta + V^z \p_z$.} 
Therefore, the projection operator $(\cdot)_a$ annihilates this term: 
\[
\bigl( (\mean{U}_\theta \cdot \nabla) \mean{U}_\theta \bigr)_a = 0.
\]
To estimate the remaining three terms, note that item~\emph{(iii)} of Lemma~\ref{lem:useful:inner:product}, gives $\|(V)_a\|_{L^2_\mu} \leq \|V\|_{L^2_\mu}$.  Moreover, using \eqref{eq:V:mean:theta}, \eqref{eq:decay}, \eqref{eq:decay:grad}, and items~\emph{(ii)} and~\emph{(iii)} of Lemma~\ref{lem:axisym:properties}  we have that $\| \mean{U}_\theta\|_{L^\infty} \leq C_{U,0}$, $\| (U)_a\|_{L^\infty} \leq 2 C_{U,0}$, and  $\|\nabla\mean{U}_\theta\|_{L^\infty} \leq C_{U,1}$. Using these facts and H\"older's inequality, we deduce
\begin{align*}
\| ((\mean{U}_\theta \cdot \nabla) (U)_a)_a \|_{L^2_\mu}
&\leq
\|  (\mean{U}_\theta \cdot \nabla) (U)_a  \|_{L^2_\mu}
\leq \| \mean{U}_\theta\|_{L^\infty}\| \nabla (U)_a  \|_{L^2_\mu}
\leq C_{U,0} \|  \nabla (U)_a  \|_{L^2_\mu},
\\
\| (((U)_a  \cdot \nabla)\mean{U}_\theta)_a \|_{L^2_\mu}
&\leq
\|  ((U)_a  \cdot \nabla)\mean{U}_\theta  \|_{L^2_\mu}
\leq \| (U)_a\|_{L^2_\mu} \|  \nabla \mean{U}_\theta\|_{L^\infty}
\leq
C_{U,1} \| (U)_a\|_{L^2_\mu},
\\
\| ( ((U)_a  \cdot \nabla) (U)_a)_a \|_{L^2_\mu}
&\leq
\|  ((U)_a  \cdot \nabla) (U)_a  \|_{L^2_\mu}
\leq \|(U)_a\|_{L^\infty} \| \nabla (U)_a  \|_{L^2_\mu}
\leq 2 C_{U,0} \|  \nabla (U)_a  \|_{L^2_\mu}.
\end{align*}
Summing these bounds we obtain
\begin{equation}
\label{eq:U:nabla:U:a}
\| ( (U\cdot \nabla) U  )_a\|_{L^2_\mu}
\leq 
C_{U,1} \|(U)_a\|_{L^2_\mu}
+ 
3 C_{U,0} \|\nabla (U)_a\|_{L^2_\mu}.
\end{equation}
This estimate is clearly consistent with~\eqref{eq:source:bound}. 

Next, we bound the pressure gradient. Note that we are seeking a bound in the Gaussian weighted $L^2_\mu$ norm, and so we do not have access to usual Calderon--Zygmund bounds. This is compensated for by the $\eps$-factor present on the right side of~\eqref{eq:source:bound}. Recall that $-\Delta P = \partial_{i} \partial_j(U^i U^j)$. Using item~\emph{(iv)} of Lemma~\ref{lem:axisym:properties}, we obtain 
\[
-\Delta (P)_a 
=  (\partial_{i} \partial_j(U^i U^j) )_a 
= ( \nabla \cdot \partial_j(U U^j))_a 
= \nabla \cdot ( \partial_j (U U^j) )_a
= \nabla \cdot ( (U\cdot\nabla) U)_a.
\]
Since $P$ decays (recall~\eqref{eq:decay:pressure}) so does $(P)_a$, and hence $(P)_a = (-\Delta)^{-1} \nabla \cdot ( (U\cdot\nabla) U)_a$,  where $(-\Delta)^{-1}$ denotes convolution with the Newtonian potential. Using item~\emph{(iii)} of Lemma~\ref{lem:axisym:properties}, we arrive at
\begin{equation}
\label{eq:nabla:P:a}
(\nabla P)_a = \nabla (P)_a = \nabla (-\Delta)^{-1} \nabla \cdot ( (U\cdot\nabla) U)_a,
\end{equation}
which is to say that  
\[
(\nabla P)_a^i = R_i R_j V^j,
\qquad \mbox{where} \qquad 
V^j =  ( (U\cdot \nabla) U )_a^j,
\] 
and $R_i$, $R_j$ are Riesz transforms. Note that we have previously (cf.~\eqref{eq:U:nabla:U:a})  bounded $\|V\|_{L^2_\mu}$,  and using   \eqref{eq:V:mean:theta}, \eqref{eq:decay}, \eqref{eq:decay:grad} we obtain that 
$|V(y)|\leq 2 C_{U,0} C_{U,1}(1+ |y|^3)^{-1}$. Since $\mu \leq 1$ and $R_i R_j$ is bounded on $L^2(\RR^3)$, we immediately have 
\[
\|R_i R_j V^j\|_{L^2_\mu} \leq \|R_i R_j V^j \|_{L^2} \leq \|V\|_{L^2}.
\] 
Then for $L_\eps \geq 1$, to be determined, we have 
\begin{align}
\|V\|_{L^2} 
&\leq \|{\bf 1}_{|y|\leq L_\eps }  V \|_{L^2} + \|{\bf 1}_{|y|>L_\eps }  V \|_{L^2} 
\notag\\
& \leq \|{\bf 1}_{|y|\leq L_\eps } \mu^{-\frac 12}\|_{L^\infty} \|\mu^{\frac 12} V \|_{L^2} 
+  2 C_{U,0} C_{U,1} \|{\bf 1}_{|y|>L_\eps }  |y|^{-3} \|_{L^2} 
\notag\\
& \leq e^{\frac 18 L_\eps ^2} \|V\|_{L^2_\mu} + 8 C_{U,0} C_{U,1} L_\eps^{-\frac 32}.
\label{eq:nabla:P:a:2}
\end{align}
Therefore, if we let
\[
L_\eps := \max \bigl\{ 1,  \bigl(\tfrac{24 C_{U,0} C_{U,1}}{\eps}\bigr)^{\frac 23} \bigr\}, 
\]
we obtain from~\eqref{eq:nabla:P:a}--\eqref{eq:nabla:P:a:2}, together with~\eqref{eq:U:nabla:U:a}, that 
\begin{equation}
\label{eq:nabla:P:a:3}
\| (\nabla P)_a \|_{L^2_\mu} \leq \eps + 9 e^{\frac 18 L_\eps^2} \bigl( C_{U,1} \|(U)_a\|_{L^2_\mu}
+ C_{U,0} \|\nabla (U)_a\|_{L^2_\mu} \bigr).
\end{equation}
Adding~\eqref{eq:U:nabla:U:a} and \eqref{eq:nabla:P:a:3}, and recalling that $C_{U,1} = C_{U,1}(C_{U,0})$, concludes the proof of~\eqref{eq:source:bound}.
\end{proof}

\subsection{Estimate for $\mathcal{R} U$}
Recall that $U$ solves the rotated backward self-similar Navier-Stokes equations~\eqref{eq:profile}. Using the operator $\mathcal{R}$ defined in~\eqref{eq:cal:R:def} and the nonlinear terms $\mathcal{N}$ defined in~\eqref{eq:source:def}, the $U$ equation may be rewritten as
\begin{subequations}
\label{eq:new:profile}
\begin{align}
\alpha \mathcal{R} U + \tfrac12 U + \bigl( -\Delta + \tfrac12 (y\cdot\nabla) \bigr)U &= \mathcal{N},
\label{eq:new:profile:a}\\
\nabla\cdot U &= 0.
\label{eq:new:profile:b}
\end{align}
\end{subequations}
The main result of this subsection is:
\begin{lemma}
\label{lem:alpha:coercive}
Let $\alpha\in \RR$ and let $U$ be a smooth solution of~\eqref{eq:new:profile} which satisfies the bounds~\eqref{eq:decay} and~\eqref{eq:decay:grad}. Then, we have 
\begin{equation}
\label{eq:alpha:coercive}
|\alpha| \, \| \mathcal{R} U\|_{L^2_\mu}^2
+ \tfrac 12 \| (U)_a \|_{L^2_\mu}^2
+ \|\nabla (U)_a\|_{L^2_\mu}^2 
\leq 
3
\| \mathcal{R} U\|_{L^2_\mu}
\| (\mathcal{N})_a \|_{L^2_\mu}.
\end{equation}
\end{lemma}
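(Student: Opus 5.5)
The estimate comes from testing the equation \eqref{eq:new:profile:a} against $\mathcal{R} U$ and against $(U)_a$ in $L^2_\mu$, and then combining the two identities.

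\textbf{Step 1: test with $\mathcal{R}U$.} We pair \eqref{eq:new:profile:a} with $\mathcal{R} U$ in $L^2_\mu$. The first term gives $\alpha\|\mathcal{R}U\|_{L^2_\mu}^2$. By item (ii) of Lemma~\ref{lem:useful:inner:product}, $\mathcal{R}$ is antisymmetric in $L^2_\mu$. Hence $\langle U,\mathcal{R}U\rangle_{L^2_\mu}=0$.

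For the diffusive term, note that $\mathcal{R}$ is generated by rotations about the $e_3$-axis. Since $\mu$ is radial, $\mathcal{R}$ commutes with $-\Delta+\frac12 y\cdot\nabla$ acting on vector fields; in Cartesian form $\mathcal{R}V = JV - (Jy\cdot\nabla)V$ is the infinitesimal generator of $V\mapsto R V(R^{\intercal}\cdot)$. Combining this commutation with self-adjointness (item (i) of Lemma~\ref{lem:useful:inner:product}) and antisymmetry of $\mathcal{R}$ gives
\[
\langle (-\Delta+\tfrac12 y\cdot\nabla)U,\mathcal{R}U\rangle_{L^2_\mu}=0 .
\]
On the right-hand side, item (ii) of Lemma~\ref{lem:useful:inner:product} yields $\langle\mathcal{N},\mathcal{R}U\rangle_{L^2_\mu}=\langle(\mathcal{N})_a,\mathcal{R}U\rangle_{L^2_\mu}$. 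Together with Cauchy--Schwarz, this gives
\[
|\alpha|\,\|\mathcal{R}U\|_{L^2_\mu}^2\le \|\mathcal{R}U\|_{L^2_\mu}\|(\mathcal{N})_a\|_{L^2_\mu}.
\]

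\textbf{Step 2: test with $(U)_a$.} We pair \eqref{eq:new:profile:a} with $(U)_a$ in $L^2_\mu$.
\begin{enumerate}[label=(\alph*)]
\item The rotation term: $\langle\mathcal{R}U,(U)_a\rangle_{L^2_\mu}=\langle\mathcal{R}(U)_a,(U)_a\rangle_{L^2_\mu}=0$ by antisymmetry.
\item The zeroth order term: by the orthogonality in item (iii) of Lemma~\ref{lem:useful:inner:product}, $\frac12\langle U,(U)_a\rangle_{L^2_\mu}=\frac12\|(U)_a\|_{L^2_\mu}^2$.
\item The diffusive term: the operator $-\Delta+\frac12 y\cdot\nabla$ commutes with $(\cdot)_a$, since it commutes with rotations (this is the vector analogue of Lemma~\ref{lem:axisym:properties}, item (iv)). It therefore preserves the orthogonal splitting, so this term equals $\|\nabla(U)_a\|_{L^2_\mu}^2$.
\item The right-hand side: it equals $\langle(\mathcal{N})_a,(U)_a\rangle_{L^2_\mu}$, which we bound by $2\|(\mathcal{N})_a\|_{L^2_\mu}\|\mathcal{R}U\|_{L^2_\mu}$ using the Poincar\'e inequality \eqref{eq:theta:Poincare}.
\end{enumerate}
This gives
\[
\tfrac12\|(U)_a\|_{L^2_\mu}^2+\|\nabla(U)_a\|_{L^2_\mu}^2\le 2\|\mathcal{R}U\|_{L^2_\mu}\|(\mathcal{N})_a\|_{L^2_\mu}.
\]
Adding this to the bound from Step 1 yields \eqref{eq:alpha:coercive} with constant $3$.

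\textbf{Justification and main obstacle.} All integrations by parts are justified by the Gaussian weight together with the polynomial bounds \eqref{eq:decay}--\eqref{eq:decay:grad}. The main obstacle is making the commutation claims rigorous for vector fields. Concretely, we must verify that $\mathcal{R}$ and $(\cdot)_a$ commute with $\Delta$ and with $y\cdot\nabla$ when acting on vector fields, and not merely on cylindrical components. The cleanest route is the Cartesian rotation-group argument: $\Delta$, $y\cdot\nabla$ and $\mu$ are all invariant under $V\mapsto RV(R^{\intercal}\cdot)$. Therefore both the generator $\mathcal{R}$ and the group average $\langle\cdot\rangle_\theta$ commute with the operator.
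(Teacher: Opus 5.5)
Your argument is correct and is essentially the paper's proof. You pair the equation with $\mathcal{R}U$, using that $\mathcal{R}$ is skew-adjoint and commutes with the self-adjoint operator $-\Delta+\tfrac12 y\cdot\nabla$; you pair with $(U)_a$ and invoke the angular Poincar\'e inequality \eqref{eq:theta:Poincare}; then you add the two bounds. The differences are cosmetic: you justify the commutations by rotation-group invariance in Cartesian coordinates rather than by the cylindrical formulas for $\Delta V$ and $y\cdot\nabla$, and you test the unprojected equation against $(U)_a$ using orthogonality rather than first applying $(\cdot)_a$ to the equation.
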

\begin{proof}[Proof of Lemma~\ref{lem:alpha:coercive}]
We begin by noting that the operators $\mathcal{R}$ and $-\Delta + \frac 12 y \cdot \nabla$ commute; this holds because $\mathcal{R}$ acts as $-\p_\theta$ on each cylindrical component of the vector fields it acts on,  and both $-\Delta$ and $y\cdot \nabla$ are differential operators with $\theta$-independent coefficients, when written in cylindrical coordinates.\footnote{\label{foot:Delta:V}Recall that for a scalar function $f$ we have $\Delta f = (\partial_{rr} + \frac 1r \partial_r) f + \frac{1}{r^2} \partial_{\theta\theta} f + \partial_{zz} f$, while for a vector field $V$ we have $\Delta V = (\Delta V^r - \frac{1}{r^2} V^r - \frac{2}{r^2} \p_\theta V^\theta) e_r + (\Delta V^\theta - \frac{1}{r^2} V^\theta + \frac{2}{r^2} \p_\theta V^r) e_\theta + (\Delta V^z) e_z$. Moreover, $y \cdot \nabla = r \p_r + z \p_z$.} Item~\emph{(ii)} of Lemma~\ref{lem:useful:inner:product} shows that $\mathcal{R}$ is skew-adjoint on $L^2_\mu$, whereas item~\emph{(i)} shows that $-\Delta + \frac 12 y \cdot \nabla$ is self-adjoint. Combined, these facts yield
\[
\langle \mathcal{R} U, (-\Delta + \tfrac 12 y \cdot \nabla) U\rangle_{L^2_\mu} = 0.
\]
Since we also have $\langle \mathcal{R} U, U\rangle_{L^2_\mu} = 0$, 
taking the $L^2_\mu$ inner product of~\eqref{eq:new:profile:a} with $
\mathrm{sgn}(\alpha) \mathcal{R}U$, and using the second identity in item~\emph{(ii)} of Lemma~\ref{lem:useful:inner:product}, we obtain
\begin{equation}
\label{eq:first:EE}
 |\alpha| \, \| \mathcal{R} U\|_{L^2_\mu}^2 
 = \mathrm{sgn}(\alpha) \langle \mathcal{R} U, \mathcal{N} \rangle_{L^2_\mu}
 = \mathrm{sgn}(\alpha) \langle \mathcal{R} U, (\mathcal{N})_a \rangle_{L^2_\mu}
 \leq \| \mathcal{R} U\|_{L^2_\mu} \| (\mathcal{N})_a  \|_{L^2_\mu}.
\end{equation}
Note however that bounding $(\mathcal{N})_a$ requires knowledge of $\nabla (U)_a$, which motivates our next computation.

We first apply the projection $(\cdot)_a$ to equation~\eqref{eq:new:profile:a}. 
We use Lemma~\ref{lem:axisym:properties} as follows: item~\emph{(i)} gives us $(\mathcal{R}U)_a = \mathcal{R}(U)_a$; the definition of $\Delta U$ in cylindrical coordinates (see Footnote~\ref{foot:Delta:V}) together with items~\emph{(iii)} and~\emph{(iv)} gives $(\Delta U)_a = \Delta (U)_a$;  and item~\emph{(v)} with $V = y = r e_r + z e_z$ so that $(V)_a = (y)_a = 0$, we obtain that $( y\cdot \nabla U)_a = y \cdot \nabla (U)_a$. With these identities, \eqref{eq:new:profile:a} gives
\[
\alpha \mathcal{R}(U)_a + \tfrac 12 (U)_a + \bigl( - \Delta + \tfrac 12 (y\cdot\nabla) \bigr) (U)_a = (\mathcal{N})_a.
\]
We take the $L^2_\mu$ inner product of this identity with $(U)_a$, appeal to items~\emph{(i)} and~\emph{(ii)} in Lemma~\ref{lem:useful:inner:product}, and deduce
\begin{equation*}
\tfrac 12 \| (U)_a \|_{L^2_\mu}^2 + \| \nabla (U)_a\|_{L^2_\mu}^2 
= \langle (\mathcal{N})_a,(U)_a\rangle_{L^2_\mu} .
\end{equation*}
By appealing to the angular Poincar\'e inequality~\eqref{eq:theta:Poincare}, we thus obtain
\begin{equation}
\label{eq:second:EE}
\tfrac 12 \| (U)_a \|_{L^2_\mu}^2 + \| \nabla (U)_a\|_{L^2_\mu}^2 
\leq  2 \| (\mathcal{N})_a \|_{L^2_\mu} \| \mathcal{R}U\|_{L^2_\mu}.
\end{equation}
Adding~\eqref{eq:first:EE} and~\eqref{eq:second:EE} concludes the proof.
\end{proof}

By combining Lemmas~\ref{lem:source:bound} and~\ref{lem:alpha:coercive} we may show that $\mathcal{R}U$ is small when $|\alpha|$ is large, in the following sense:
\begin{proposition}
\label{prop:large:alpha}
Let $\alpha\in \RR$ and let $U$ be a smooth solution of~\eqref{eq:new:profile} which satisfies the bounds~\eqref{eq:decay} and~\eqref{eq:decay:grad}. Let $\eps \in (0,1]$ be arbitrary. Then, there exists a constant $A_\eps = A(\eps,C_{U,0})\geq 1$, such that for all $|\alpha|\geq A_{\eps}$ we have 
\begin{equation}
|\alpha|  \, \| \mathcal{R} U\|_{L^2_\mu}
\leq  
\eps.
\label{eq:large:alpha:main}
\end{equation}
\end{proposition}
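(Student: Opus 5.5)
Write $X:=\|\mathcal{R}U\|_{L^2_\mu}$, $Y:=\|(U)_a\|_{L^2_\mu}$, $Z:=\|\nabla (U)_a\|_{L^2_\mu}$, and $N:=\|(\mathcal{N})_a\|_{L^2_\mu}$. The plan is to combine Lemma~\ref{lem:alpha:coercive} with Lemma~\ref{lem:source:bound}, applied with a small parameter $\eta$ chosen in terms of $\eps$, and then absorb the terms linear in $Y,Z$ into the left side.

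First, Lemma~\ref{lem:alpha:coercive} gives $|\alpha|X^2+\tfrac12 Y^2+Z^2\le 3XN$. Lemma~\ref{lem:source:bound} with parameter $\eta$ gives $N\le \eta + C_\eta(Y+Z)$. By Young's inequality,
\[
3XC_\eta(Y+Z)\le \tfrac14 Y^2+\tfrac12 Z^2+ (9C_\eta^2+\tfrac92 C_\eta^2)X^2 .
\]
Absorbing the $Y,Z$ pieces yields
\[
\bigl(|\alpha|-15C_\eta^2\bigr)X^2 \le 3\eta X.
\]
If $|\alpha|\ge 30C_\eta^2$, then $|\alpha|-15C_\eta^2\ge |\alpha|/2$. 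Hence either $X=0$ or $|\alpha| X\le 6\eta$. Choosing $\eta=\eps/6$ gives $|\alpha|X\le\eps$ for all $|\alpha|\ge A_\eps:=\max\{1,30C_{\eps/6}^2\}$. This constant depends only on $\eps$ and $C_{U,0}$, and in particular is independent of $\alpha$, because $C_\eta$ depends only on $\eta$ and $C_{U,0}$ (with $C_{U,1}=C_{U,1}(C_{U,0})$).

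I expect essentially no obstacle. The key point, already encoded in Lemma~\ref{lem:source:bound}, is that the $\alpha$-independent additive constant in the bound for $N$ can be made arbitrarily small at the cost of a large $C_\eta$, and that cost is paid by taking $|\alpha|$ large. The one point to check is that all quantities are finite, so that the absorption is legitimate. This holds because $U$ and $\nabla U$ are bounded by \eqref{eq:decay}--\eqref{eq:decay:grad} and $\mu$ is Gaussian, so $X,Y,Z<\infty$.
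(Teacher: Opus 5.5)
Your proposal is correct and follows essentially the same route as the paper: combine Lemma~\ref{lem:alpha:coercive} with Lemma~\ref{lem:source:bound} at parameter $\eps/6$, use Young's inequality to absorb the $\|(U)_a\|_{L^2_\mu}$ and $\|\nabla(U)_a\|_{L^2_\mu}$ terms, and take $|\alpha|$ larger than a multiple of $C_{\eps/6}^2$. The only differences are your slightly different Young constants and the explicit $\max\{1,\cdot\}$, which guarantees $A_\eps\geq 1$ as stated.
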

\begin{proof}[Proof of Proposition~\ref{prop:large:alpha}]
The bounds~\eqref{eq:source:bound} (with $\eps$ replaced by $\frac 16 \eps$), \eqref{eq:alpha:coercive}, and Young's inequality show that there exists $C_{\eps}= C_{\eps}(\eps,C_{U,0})>0$, which is in particular independent of $\alpha$, such that  
\begin{align*}
&|\alpha| \, \| \mathcal{R} U\|_{L^2_\mu}^2
+ \tfrac 12 \| (U)_a \|_{L^2_\mu}^2
+ \|\nabla (U)_a\|_{L^2_\mu}^2 
\notag\\
&\qquad \leq 
3
\| \mathcal{R} U\|_{L^2_\mu}
\bigl( \tfrac 16 \eps + C_{\eps} \|(U)_a\|_{L^2_\mu} + C_{\eps} \|\nabla (U)_a\|_{L^2_\mu}\bigr)
\notag\\
&\qquad \leq \tfrac 12  \eps \| \mathcal{R} U\|_{L^2_\mu} 
+ \tfrac 12 \|(U)_a\|_{L^2_\mu}^2 + \tfrac{9}{2} C_\eps^2 \| \mathcal{R} U\|_{L^2_\mu}^2
+ \|\nabla(U)_a\|_{L^2_\mu}^2 + \tfrac{9}{4} C_\eps^2 \| \mathcal{R} U\|_{L^2_\mu}^2.
\end{align*}
Absorbing the appropriate terms into the left side of the above inequality gives
\[
|\alpha| \, \| \mathcal{R} U\|_{L^2_\mu}^2
\leq  \tfrac 12 \eps \| \mathcal{R} U\|_{L^2_\mu}  +  \tfrac{27}{4} C_\eps^2  \| \mathcal{R} U\|_{L^2_\mu}^2.
\]
Letting $A_\eps :=  \frac{27}{2} C_\eps^2$, the above inequality shows that if $|\alpha| \geq A_\eps$, then $\frac 12 |\alpha|  \| \mathcal{R} U\|_{L^2_\mu}^2
\leq \tfrac 12 \eps \| \mathcal{R} U\|_{L^2_\mu} $, concluding the proof.
\end{proof}

\subsection{Proof of the main result for RSS solutions}
\label{sec:proof:RSS:large}
We conclude this section by giving the proof of our main result, when $|\alpha|\gg 1$.

\begin{proof}[Proof of Theorem~\ref{thm:main}: the large $|\alpha|$  case]
From~\eqref{eq:E:is:theta:derivative} and identity~\eqref{eq:vorticity:error}, we have that 
\[
\int_{\RR^3} |\Omega|^2 w \, dy = \tfrac 12 \alpha \int_{\RR^3} \p_\theta (|U|^2 + U\cdot y) \, w dy,
\]
where $w$ is the positive weight constructed in Proposition~\ref{prop:weight:construction}, with parameter $\eps \mapsto \frac 14$.

We first note that $\p_\theta |U|^2 = 2 U^r \p_\theta U^r + 2 U^\theta \p_\theta U^\theta + 2 U^z \p_\theta U^z$. Recalling the definition of $\mathcal{R}$ in~\eqref{eq:cal:R:def} we deduce the pointwise-in-$y$ bound
\[
\bigl| \p_\theta |U|^2 \bigr| \leq 6 |U| \, |\mathcal{R} U|.
\]
Similarly, since $\p_\theta(U \cdot y) = \p_\theta (r U^r + z U^z) = r \p_\theta U^r +  z \p_\theta U^z$, we have the pointwise-in-$y$ bound
\[
\bigl| \p_\theta (U\cdot y) \bigr| \leq 2 |y| \, |\mathcal{R} U|.
\] 
Combining the inequalities above with the $U$ bound in~\eqref{eq:decay}, the $w$ bound in~\eqref{eq:weight:decay} (since we fixed the small parameter at $\frac 14$, we have $M = M(C_{U,0})$), and the definition of $L^2_\mu$ in~\eqref{eq:L2:mu:def}, we arrive at 
\begin{align}
\int_{\RR^3} |\Omega|^2 w \, dy 
&\leq    |\alpha| \int_{\RR^3}   (3 |U|  + |y|) \,|\mathcal{R} U|  w dy 
\notag\\
&\leq    |\alpha| \int_{\RR^3}  w (3 C_{U,0}  + |y|) \mu^{-\frac 12} \, |\mathcal{R} U| \mu^{\frac 12}  dy
\notag\\
&\leq    |\alpha| \| \mathcal{R} U\|_{L^2_\mu} \, M
\left( \int_{\RR^3}  e^{-\frac 38 |y|^2}  (3 C_{U,0}  + |y|)^2 e^{\frac 14 |y|^2} dy\right)^{\frac 12}
\notag\\
&\leq    |\alpha| \| \mathcal{R} U\|_{L^2_\mu} \, M (1+ 3 C_{U,0})
\left( \int_{\RR^3}  e^{-\frac 18 |y|^2}  (1+ |y|)^2  dy\right)^{\frac 12}.
\label{eq:Omega:large:alpha:1}
\end{align}
One may verify that the integral term appearing in the last line of~\eqref{eq:Omega:large:alpha:1} is $\leq 100$. Thus, we deduce that there exists a constant $M^\prime = M^\prime(C_{U,0})>0$ (which is in particular independent of $\alpha$) such that 
\[
\| \Omega \sqrt{w} \|_{L^2}^2 \leq M^\prime |\alpha| \, \|\mathcal{R} U\|_{L^2_\mu}.
\]
Combining this bound with Proposition~\ref{prop:large:alpha} shows that for any $\eps \in (0,1]$, there exists $A_\eps = A_\eps(\eps,C_{U,0})$ such that 
\[
\| \Omega \sqrt{w} \|_{L^2}^2
\leq \eps M^\prime,
\qquad \mbox{whenever} \qquad 
|\alpha| \geq A_\eps.
\]
The proof is now concluded in analogy to the small $|\alpha|$ case.
From~\eqref{eq:weight:decay}, and with $\bar R = \bar R(C_{U,0}) >0$ as in Proposition~\ref{prop:local:vort} we deduce that there exists $m = m(C_{U,0})>0$ such that 
\[ 
\|\Omega\|_{L^2(B_{\bar R})}^2
\leq m^{-1} e^{\frac{5}{16} \bar R^2} \cdot \eps M^\prime
\]
whenever $|\alpha|\geq A_\eps$. Setting 
\[
\eps := \tfrac 12 
\min \bigl \{ (M^\prime)^{-1} \cdot m e^{-\frac{5}{16} \bar R^2} \cdot C_{\Omega}^{-2}, 1 \bigr \},
\]
so that $\eps$ is a parameter that only depends on $C_{U,0}$, we obtain that \eqref{eq:small:enstrophy} holds for all $|\alpha| \geq A_{\eps}$, and thus $U\equiv 0$ by Proposition~\ref{prop:local:vort}.
\end{proof}

\section{DSS and RDSS: proof of the main result for $\alpha$ small and $\lambda$ close to $1$}
\label{sec:DSS:RDSS}
Setting $\alpha=0$ in Theorem~\ref{thm:main:RDSS} recovers the statement of Theorem~\ref{thm:main:DSS}, and hence we only discuss the case of rotated discretely self-similar solutions, with small period $S$ and small (absolute) angular speed $|\alpha|$. The arguments presented here closely follow those in Sections~\ref{sec:pressure:and:derivative:bounds}--\ref{sec:alpha:small}, and thus we omit details which are identical. 

Without loss of generality, we let $\underline{\alpha} \leq 1$ and $\underline{\lambda} \leq e^{\frac 12}$, so that $|\alpha| \leq 1$ and $S \leq 1$, due to~\eqref{eq:RDSS:period}.
 
\subsection{Uniform-in-time Type~I bounds for higher derivatives}
We seek a statement analogous to Lemma~\ref{lem:gradient}, with bounds for $\nabla U, \nabla^2 U$, and $P$, which are \textit{uniform-in-time} for $s\in [0,S]$ and only depend on $C_{U,0}$. In the process, we also obtain bounds for $\p_s U$ and $\p_s \nabla U$. 

\begin{lemma}
\label{lem:many:gradients}
Under the assumptions of Theorem~\ref{thm:main:RDSS} (in particular $|\alpha|\leq 1$), the profile $U(\cdot,s)$ satisfies the pointwise bound~\eqref{eq:decay} for all $s\in [0,S]$, with the same constant $C_{U,0}>0$ from the Type~I upper bound~\eqref{eq:type:I}. The space-gradients of $U$ satisfy~\eqref{eq:decay:grad} and the pressure $P(\cdot,s)$ obeys estimate~\eqref{eq:decay:pressure} for every $s\in[0,S]$, with the same $(\alpha,S)$-independent constants $C_{U,1}, C_{U,2}, C_{P,0}$ as in Lemma~\ref{lem:gradient}. Moreover, there exists an $(\alpha,S)$-independent constant $C_{U,s} = C_{U,s}(C_{U,0})>0$ such that
\begin{equation}
\label{eq:ps:U:bound}
|\p_s U(y,s)| \le \frac{C_{U,s}}{1+|y|},
\qquad
|\nabla \p_s U(y,s)| \le \frac{C_{U,s}}{1+|y|^2},
\end{equation}
for all $(y,s)\in\RR^3\times[0,S]$.
\end{lemma}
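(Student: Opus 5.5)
The plan is to work, as in the proof of Lemma~\ref{lem:gradient}, in the original $(x,t)$ variables, where neither the equations nor the Type~I bound~\eqref{eq:type:I} involve $\alpha$ or $S$. The ansatz~\eqref{eq:RDSS:ansatz} is periodic modulo scaling and rotation, so $u$ extends backwards in time to a solution on $\RR^3\times(-\infty,0)$. This extension still satisfies~\eqref{eq:type:I} with the same $C_{U,0}$, because the bound is invariant under $u\mapsto\lambda R u(\lambda R^{\intercal}x,\lambda^2 t)$. The bound~\eqref{eq:decay} for $U(\cdot,s)$ at each $s\in[0,S]$ then follows by undoing~\eqref{eq:RDSS:ansatz}: $|U(y,s)|=\sqrt{-t}\,|u(x,t)|$ with $|x|=\sqrt{-t}|y|$.

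\textbf{Spatial derivatives.} Fix $s\in[0,S]$, set $t=-e^{-s}\in[-1,-e^{-S}]$, fix $y$, and let $r=\tfrac12\sqrt{-t}\max\{|y|,1\}$. Rescale $\tilde u(\tilde x,\tilde t)=r\,u(x+r\tilde x,t+r^2\tilde t)$ on $Q_1$, which is legitimate by the backward extension. Since $|x+r\tilde x|\ge|x|/2$, the Type~I bound gives $\|\tilde u\|_{L^\infty(Q_1)}\lesssim C_{U,0}$. The quantitative interior regularity of Footnote~\ref{foot:int:reg} (Serrin, with no pressure needed) then bounds $\nabla\tilde u,\nabla^2\tilde u$ at $(0,0)$ by constants depending only on $C_{U,0}$. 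Undoing the scaling and the rotation $R(\alpha s)$, which preserves norms, gives~\eqref{eq:decay:grad} at time $s$. The factors of $e^{-s}$ are harmless since $s\le S\le1$; alternatively one rescales the definition so they cancel exactly.

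\textbf{Pressure.} The profile equation~\eqref{eq:profile:RDSS:a} has divergence-free linear terms ($\p_sU$, the rotation terms, and the scaling terms), so $-\Delta P=\p_i\p_j(U^iU^j)$ holds at each $s$. Normalizing $P=R_iR_j(U^iU^j)$, the kernel argument from Lemma~\ref{lem:gradient} gives~\eqref{eq:decay:pressure} uniformly in $s$.

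\textbf{Time derivative.} Here I would not bound $\p_s U$ directly from the equation, since that would bring in $\alpha(Jy\cdot\nabla)U$ and $y\cdot\nabla U$, which cost a factor of $|y|$. Instead use~\eqref{eq:dsU:xt} adapted with rotation, which expresses $\p_sU$ as a rotated copy of $\sqrt{-t}\bigl((-t)\p_tu-\tfrac12u-\tfrac12x\cdot\nabla u\bigr)$ plus $\alpha(JU-(Jy\cdot\nabla)U)$. Both pieces must be estimated.
\begin{itemize}
\item The term $\tfrac12 x\cdot\nabla u$ rescales to $\tfrac12 y\cdot\nabla U=O((1+|y|)^{-1})$ by~\eqref{eq:decay:grad}.
\item The term $\alpha(Jy\cdot\nabla)U$ is likewise $O((1+|y|)^{-1})$, using $|\alpha|\le1$.
\item The term $(-t)\p_tu$ needs a pointwise bound on $\p_t u$ in the cylinder above. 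Interior regularity controls $\p_t\tilde u$ and $\nabla\p_t\tilde u$ through $\p_t\tilde u=\Delta\tilde u-\tilde u\cdot\nabla\tilde u-\nabla\tilde p$, so this requires a local bound on $\nabla\tilde p$.
\end{itemize}

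This pressure step is the main obstacle. I would decompose $\tilde p$ into a local Calder\'on--Zygmund part and a far-field harmonic part, as in~\cite[Lemma A.2]{ChenStrainTsaiYau08}. The local part is controlled by $\|\tilde u\|_{L^\infty}$. The far-field part is controlled by the global decay~\eqref{eq:decay:pressure} already obtained, after subtracting a spatial constant in time, which does not affect $\nabla\tilde p$. This gives $|\p_t u|\lesssim r^{-3}$ and $|\nabla\p_tu|\lesssim r^{-4}$, and rescaling yields~\eqref{eq:ps:U:bound}. The resulting constant depends only on $C_{U,0}$ and on $C_{P,0}$, hence only on $C_{U,0}$.
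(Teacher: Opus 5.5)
Your argument is correct. For \eqref{eq:decay}, \eqref{eq:decay:grad} and the pressure it coincides with the paper's proof:
\begin{itemize}
\item backward extension of $u$ via the ansatz;
\item rescaled Serrin interior estimates with $r=\tfrac12\sqrt{-t}\max\{|y|,1\}$, so that the powers of $e^{-s/2}$ cancel exactly;
\item the representation $P=R_iR_j(U^iU^j)$.
\end{itemize}

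The difference lies in \eqref{eq:ps:U:bound}. The paper reads $\partial_sU$ off directly from \eqref{eq:profile:RDSS:a}. It bounds each of $\Delta U$, $\alpha\mathcal{R}U$, $\tfrac12U$, $\tfrac12(y\cdot\nabla)U$, $(U\cdot\nabla)U$ and $\nabla P$ via \eqref{eq:decay:all:k} and \eqref{eq:decay:pressure:k}. The latter comes from differentiating the singular-integral representation of $P$. For $\nabla\partial_sU$ the paper differentiates the equation once in $y$.

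Your stated reason for avoiding this route is unfounded. We have $|y|\,|\nabla U|\le C_{U,1}|y|(1+|y|^2)^{-1}\lesssim(1+|y|)^{-1}$, which is exactly the claimed rate, and your own bullets bound the same two terms in this way. In fact, substituting $\partial_t u=\Delta u-(u\cdot\nabla)u-\nabla p$ into your rotated $(x,t)$ identity, with $P=(-t)p$, turns it into \eqref{eq:profile:RDSS:a} verbatim. So your route is the paper's computation written in the original variables.

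The only genuinely different ingredient is how you bound the pressure gradient. Your local Calder\'on--Zygmund/harmonic splitting works. However, its far-field part is controlled through the global decay of $P$, so it relies on the global representation anyway. It is just a longer way of obtaining $|\nabla^k P|\lesssim(1+|y|)^{-(2-\sigma+k)}$. Such a splitting only pays off in a genuinely local setting like Section~\ref{sec:local}.

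There are two small slips, neither affecting the conclusion.
\begin{itemize}
\item For $|y|<1$ the inequality $|x+r\tilde x|\ge|x|/2$ can fail: take $|y|$ close to $1$ and $\tilde x$ pointing toward the origin. In that case the Type~I denominator must instead be controlled by $\sqrt{-t'}\ge\sqrt{-t}=2r$.
\item Feeding only $|P|\lesssim(1+|y|)^{-(2-\sigma)}$ into the harmonic part gives $|\nabla h|\lesssim r^{-(3-\sigma)}$ rather than $r^{-3}$. This is still far better than the $(1+|y|)^{-1}$ you need.
\end{itemize}
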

\begin{proof}[Proof of Lemma~\ref{lem:many:gradients}]
The proof is nearly identical to that of Lemma~\ref{lem:gradient}, so we only emphasize the differences. Since $R(\alpha s) \in SO(3)$, using~\eqref{eq:RDSS:ansatz} we obtain that $|U(y,s)| = \sqrt{-t}\,|u(x,t)|$ for all $(y,s) \in \RR^3 \times [0,S]$. The Type~I bound~\eqref{eq:type:I} then gives $|U(y,s)| \leq C_{U,0} (1+|y|)^{-1}$ for all $(y,s)$, thereby proving~\eqref{eq:decay}.

The qualitative smoothness of $(U,P)$ and the pressure representation formula $P= R_i R_j(U^i U^j)$ are established exactly as in Lemma~\ref{lem:gradient}.

For the bounds on $\nabla U$ and $\nabla^2 U$, we run the interior-regularity argument of Lemma~\ref{lem:gradient}, making sure that the constants only depend on $C_{U,0}$. Fix $(y,s) \in \RR^3 \times [0,S]$ and set $x_* = R(\alpha s)\,e^{-\frac s2} y$, $t_* = -e^{-s}$, $r = \tfrac12\, e^{-\frac s2}\max\{|y|,1\}$. Then, the function\footnote{Here we are using the RDSS ansatz~\eqref{eq:RDSS:total} to extend $u$ backwards in time from $\RR^3\times [-1,0)$ to a solution of~\eqref{eq:NSE} on $\RR^3 \times (-\infty,0)$, while retaining the Type~I bound $|u(x,t)|\le C_{U,0}(|x|+\sqrt{-t})^{-1}$ there, with the same constant $C_{U,0}$.} 
$\tilde u(\tilde x,\tilde t) := r\, u\big(x_* + r \,\tilde x,\; t_* + r^2\,\tilde t\big)$ 
solves~\eqref{eq:NSE} on the unit parabolic cylinder $Q_1 = B_1\times(-1,0\,]$. For $(\tilde x,\tilde t) \in Q_1$ we have 
$(x,t) = (x_* + r \,\tilde x,\; t_* + r^2\,\tilde t )$ satisfies\footnote{To see this, note that $-t = - t_* - r^2\,\tilde t = e^{-s} - r^2\tilde t \ge e^{-s}$, hence $\sqrt{-t}\ge e^{- \frac s2}$. Similarly,  $|x| = |x_* + r \tilde x| \ge |x_*| - r = e^{-\frac s2} |y| - r$. If $|y|\ge 1$ then we obtain $|x|\ge r$. If $|y|<1$ then $\sqrt{-t}\ge e^{-\frac s2} = 2r$. In either case $|x|+\sqrt{-t}\ge r$.}  $|x|+\sqrt{-t}\ge r$, so that~\eqref{eq:type:I} gives $\|\tilde u\|_{L^\infty(Q_1)} \leq C_{U,0}$, a bound independent of $y$, $s$ and $\alpha$. A quantitative version of the interior regularity for Navier-Stokes~\cite{Serrin62}, gives for each $k\ge 1$ a constant $C_k = C_k(C_{U,0})$ with $|\nabla_{\tilde x}^k \tilde u(0,0)|\le C_k$. Undoing the rescaling we have $\nabla_x^k u(x_*,t_*) = r^{-(k+1)}\nabla_{\tilde x}^k\tilde u(0,0)$, and recall that by the definition of $(x_*,t_*)$ we have $U(y,s) = e^{- \frac s2}R(-\alpha s)u(x_*,t_*)$. Since each $y$-derivative contributes a factor $e^{- \frac s2}$, we deduce $|\nabla_y^k U(y,s)| = e^{-\frac{(k+1)s}{2}}\,|\nabla_x^k u(x_*,t_*)| = e^{-\frac{(k+1)s}{2}} r^{-(k+1)} \,|\nabla_{\tilde x}^k \tilde u(0,0)| \leq 2^{k+1} C_k \max\{|y|,1\}^{-(k+1)} $. Since $C_k$ only depends on $C_{U,0}$, this establishes the desired bound 
\begin{equation}
\label{eq:decay:all:k}
|\nabla_y^k U(y,s)| \leq    
\frac{C_{U,k}}{(1+|y|)^{k+1}},\qquad k\ge 0,
\end{equation}
for all $(y,s) \in \RR^3 \times [0,S]$, uniformly in $\alpha$.  For $k=1,2$ this is the claimed estimate~\eqref{eq:decay:grad}.

Since $P = R_iR_j\big(U^iU^j\big)$ plus a function of time, which without loss of generality we take to vanish identically, the bounds in~\eqref{eq:decay:all:k} and the singular-integral representation of $R_i R_j$ imply
\begin{equation}
\label{eq:decay:pressure:k}
|\nabla_y^k P(y,s)| \leq 
\frac{C_{P,k}}{(1+|y|)^{\,2-\sigma+k}}, \qquad k\ge 0,
\end{equation}
where $C_{P,k}$ depends on $C_{U,0}$, $k$, and $\sigma >0$, and is hence independent of $s$ and $\alpha$.

In order to prove the first bound in~\eqref{eq:ps:U:bound}, we use~\eqref{eq:profile:RDSS:a} to write 
$\p_s U = \Delta U - \alpha\big(JU - (Jy\cdot\nabla)U\big) - \tfrac12 U - \tfrac12 (y\cdot\nabla)U - (U\cdot\nabla)U - \nabla P$, and bound each term on the right side using~\eqref{eq:decay:all:k} and~\eqref{eq:decay:pressure:k}. The $\alpha$ independence of the upper bound is a consequence of the standing assumption $|\alpha|\leq 1$. For the second bound in~\eqref{eq:ps:U:bound} we differentiate~\eqref{eq:profile:RDSS:a} with respect to $y$, and proceed similarly.
\end{proof}

\subsection{The time-averaged profile and fluctuations from the mean}
We use the following notation $\mean{f}_s$ for the \emph{time-average} of a function $f$ over the period $[0,S]$, and denote by $\tilde f$ the associated \emph{fluctuation} from the mean:
\begin{equation}\label{eq:time:mean}
\mean{f}_s := \frac{1}{S}\int_0^S f(\cdot,s)\, ds,
\qquad
\tilde f := f - \mean{f}_s.
\end{equation}
In particular, $\mean{\tilde f}_s = 0$. An immediate consequence of this definition and of the bounds in~\eqref{eq:ps:U:bound} is that the fluctuation $\tilde U = U - \mean{U}_s$ is \emph{linearly} small in the period.

\begin{lemma} 
\label{lem:fluct}
Under the assumptions of Theorem~\ref{thm:main:RDSS} with $|\alpha|\le 1$, let $C_{U,s} = C_{U,s}(C_{U,0})>0$ be the constant from~\eqref{eq:ps:U:bound}. Then, the time-fluctuation of the profile satisfies
\begin{equation}
\label{eq:fluct:pointwise}
|\tilde U(y,s)| \le \frac{C_{U,s}\, S}{1+|y|},
\qquad
|\nabla \tilde U(y,s)| \le \frac{C_{U,s}\, S}{1+|y|^2},
\end{equation}
for all $(y,s) \in \RR^3 \times [0,S]$. In particular, we have 
\begin{equation}\label{eq:fluct:L2}
\| \tilde\Omega(\cdot,s) \|_{L^2(\RR^3)} \le 10 C_{U,s} \, S
\qquad\mbox{for every} \qquad s \in [0,S].
\end{equation}
Additionally, the \emph{time-averaged profile} $\mean{U}_s= \mean{U}_s(y)$ is smooth, divergence-free, and satisfies the bounds~\eqref{eq:decay} and~\eqref{eq:decay:grad}, with the same $C_{U,0}$-dependent constants.
\end{lemma}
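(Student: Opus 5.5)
The argument is a direct consequence of the mean value theorem in self-similar time, combined with the uniform-in-time bounds of Lemma~\ref{lem:many:gradients}. For fixed $y$ and $s\in[0,S]$, the definition~\eqref{eq:time:mean} gives
\[
\tilde U(y,s) = \frac{1}{S}\int_0^S \bigl(U(y,s)-U(y,s')\bigr)\,ds' = \frac{1}{S}\int_0^S\int_{s'}^{s}\p_s U(y,\tau)\,d\tau\,ds'.
\]
Since $|s-s'|\le S$, the first bound in~\eqref{eq:ps:U:bound} yields $|\tilde U(y,s)|\le S\sup_\tau|\p_s U(y,\tau)|\le C_{U,s}S(1+|y|)^{-1}$. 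The same computation applied to $\nabla U$, using the second bound in~\eqref{eq:ps:U:bound} and the fact that $\nabla$ commutes with the time average (justified by the smoothness of $U$ and the uniform bounds), gives the gradient estimate in~\eqref{eq:fluct:pointwise}.

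For~\eqref{eq:fluct:L2}, note that $\tilde\Omega = \nabla\times\tilde U$, so $|\tilde\Omega|\le 2|\nabla\tilde U|$ pointwise. This gives
\[
\|\tilde\Omega(\cdot,s)\|_{L^2}^2\le 4C_{U,s}^2S^2\int_{\RR^3}(1+|y|^2)^{-2}\,dy = 4\pi^2 C_{U,s}^2S^2,
\]
since $\int_{\RR^3}(1+|y|^2)^{-2}dy=\pi^2$. Taking square roots, we obtain the bound $2\pi C_{U,s}S\le 10C_{U,s}S$.

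For the time-averaged profile, each bound in~\eqref{eq:decay} and~\eqref{eq:decay:grad} holds for $U(\cdot,s)$ uniformly in $s\in[0,S]$ with the same constants, by Lemma~\ref{lem:many:gradients}. By the triangle inequality for the average $\frac1S\int_0^S$, the same bounds pass to $\mean{U}_s$. Smoothness of $\mean{U}_s$ follows from differentiating under the integral: $U\in C^\infty$ in $y$, and by~\eqref{eq:decay:all:k} all $y$-derivatives are bounded uniformly in $s$ on $[0,S]$, so dominated convergence applies. Divergence-freeness follows in the same way, since $\nabla\cdot\mean{U}_s=\mean{\nabla\cdot U}_s=0$.

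There is no real obstacle here. The only point needing care is justifying that spatial derivatives commute with the time average, and this is handled by the uniform bounds~\eqref{eq:decay:all:k} and~\eqref{eq:ps:U:bound} together with continuity of $U$ and its derivatives on $\RR^3\times[0,S]$.
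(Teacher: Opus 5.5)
Your proof is correct and follows the paper's own argument essentially step for step. You use the same double-integral representation of $\tilde U$ through $\p_s U$. You control the curl by a pointwise constant times $|\nabla\tilde U|$ together with $\|(1+|y|^2)^{-1}\|_{L^2(\RR^3)}=\pi$; your constant $2$ is cruder than the paper's $\sqrt2$ but still fits under $10$. And you average the uniform-in-$s$ bounds of Lemma~\ref{lem:many:gradients} to handle $\mean{U}_s$.
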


\begin{proof}[Proof of Lemma~\ref{lem:fluct}]
Since $\mean{\tilde U}_s = 0$, we may write $\tilde U(\cdot,s) = \tfrac1S\int_0^S\big(U(\cdot,s) - U(\cdot,s^\prime)\big)\, ds^\prime = \tfrac1S\int_0^S\!\int_{s^\prime}^s \p_\tau U(\cdot,\tau)\, d\tau\, ds^\prime$, whence $|\tilde U(\cdot,s)| \le S\,\sup_\tau |\p_\tau U|$. The bounds~\eqref{eq:fluct:pointwise} now follow from~\eqref{eq:ps:U:bound}. 

Since $\tilde \Omega = \nabla \times \tilde U$, we have $\|\tilde\Omega(\cdot,s)\|_{L^2}\le \sqrt2\,\|\nabla\tilde U(\cdot,s)\|_{L^2} \le \sqrt{2} C_{U,s} S \, \big\|(1+|y|^2)^{-1}\big\|_{L^2(\RR^3)} \leq 10 C_{U,s} S$, which is~\eqref{eq:fluct:L2}.

Finally, the claims about $\mean{U}_s$ follow directly from the definition~\eqref{eq:time:mean} and from Lemma~\ref{lem:many:gradients}.
\end{proof}

\subsection{The time-dependent equation satisfied by the Bernoulli function}
We use the same definition for the head-pressure (Bernoulli function) as in~\eqref{eq:Bernoulli:def}, except that all fields are now time dependent: 
\[
\Pi(y,s) := P(y,s) + \tfrac 12 |U(y,s)|^2 + \tfrac 12 y \cdot U(y,s)
.
\]
Analogously to~\eqref{eq:L:def}, we denote by $L$ the elliptic operator with  time-dependent coefficients
\[
L := L(s) = - \Delta + \bigl( U(y,s) + \tfrac 12 y\bigr) \cdot \nabla \, ,
\]
and define a time-independent version associated to the time-averaged profile
\[
\bar L := - \Delta + \bigl( \mean{U}_s(y) + \tfrac 12 y\bigr) \cdot \nabla \, .
\]
The difference between the above two elliptic operators is a pure transport derivative: 
\[
L - \bar L = \tilde U(y,s) \cdot \nabla.
\]

With this notation, we derive a time-dependent version of~\eqref{eq:L:Pi:alpha}, by computing $L \Pi$; the resulting identity acquires exactly one new term when compared to the time-independent setting.
 Assume that $(U,P)$ are smooth solutions of~\eqref{eq:profile:RDSS} which are $[0,S]-$periodic in time. Then, by repeating the computation in~\eqref{eq:L:Pi:alpha} we obtain\footnote{The only difference is that the $\nabla P$ term carries the extra factor of $- \p_s U$, which in turn causes the new term $-(U + \tfrac 12 y) \cdot \p_s U$ to appear in \eqref{eq:L:Pi:alpha:s}.} that
\begin{equation}
\label{eq:L:Pi:alpha:s}
L\Pi + |\Omega|^2 = \alpha E - \big(U + \tfrac12 y\big)\cdot \p_s U,
\end{equation}
where $E$ is the error term  defined in~\eqref{eq:E:def}. 
Recalling~\eqref{eq:E:is:theta:derivative}, we may write~\eqref{eq:L:Pi:alpha:s} in the more informative form
\[
L \Pi + |\Omega|^2 = \tfrac 12 (\alpha \p_\theta - \p_s) (|U|^2 + U\cdot y).
\]
The most convenient form of the above identity involves replacing the operator $L$ with its time-independent version $\bar L$, leading to
\begin{equation}
\bar L \Pi + |\Omega|^2 = - \tilde U \cdot \nabla  \Pi + \tfrac 12 (\alpha \p_\theta - \p_s) (|U|^2 + U\cdot y).
\label{eq:L:Pi:alpha:s:good}
\end{equation}
At this point the proof strategy becomes clear: make the terms on the right side of~\eqref{eq:L:Pi:alpha:s:good} small either by taking $|\alpha|$ small or by taking the period to be small, and use the left side of~\eqref{eq:L:Pi:alpha:s:good} to deduce a weighted $L^2$ bound for the vorticity, where the weight is taken to lie in the kernel of  $\bar L^*$.

\subsection{A pointwise-in-time local enstrophy bound}\label{subsec:pwlocenst}
By Lemma~\ref{lem:fluct}, the time-averaged profile $\mean{U}_s$ is smooth, divergence-free, and satisfies~\eqref{eq:decay}--\eqref{eq:decay:grad}. Proposition~\ref{prop:weight:construction} 
thus applies to the operator $\bar L := -\Delta + (\mean{U}_s + \tfrac12 y)\cdot\nabla$ and its formal adjoint operator $\bar L^*$ (with respect to $L^2(\RR^3)$). With $\eps = \tfrac14$, Proposition~\ref{prop:weight:construction} yields  constants $m,M>0$ depending only on $C_{U,0}$, and a strictly positive $C^2$ weight $\bar w \colon \RR^3\to\RR_+$ with
\begin{equation}
\label{eq:bar:w}
\bar L^* \bar w = 0,
\qquad
m\, e^{-\frac{5}{16}|y|^2} \le \bar w(y) \le M\, e^{-\frac{3}{16}|y|^2},
\end{equation}
where $\bar L^* := -\Delta - \big(\mean{U}_s + \tfrac12 y\big)\cdot\nabla - \tfrac32$. We stress that $\bar w = \bar w(y)$ is \emph{independent of $s$}, and  the constants $m$ and $M$ are \emph{independent of $s$ and $\alpha$}.

We multiply \eqref{eq:L:Pi:alpha:s:good} by the weight $\bar w$ (fixed as in~\eqref{eq:bar:w}), and then integrate in $y$ over $\RR^3$, to obtain
\begin{align*}
\int_{\RR^3} |\Omega(\cdot,s)|^2 \, \bar w \, dy 
&= - \int_{\RR^3} \tilde U \cdot \nabla \Pi \, \bar w \, dy
+ \tfrac{\alpha}{2} \int_{\RR^3} \p_\theta \bigl( |U|^2 + U \cdot y \bigr) \, \bar w \, dy
- \tfrac 12 \tfrac{d}{ds} \int_{\RR^3} \bigl( |U|^2 + U \cdot y \bigr) \, \bar w \, dy,
\end{align*}
pointwise for $s\in [0,S]$. Here we have crucially used that $\bar w$ is independent of $s$. 

The \emph{periodicity in $s$ of the profile $U$} is now used to make the last term in the above identity disappear, upon integration in $s$:
\begin{align*}
\int_0^S\!\! \int_{\RR^3} |\Omega(\cdot,s)|^2 \, \bar w \, dy ds
&= - \int_0^S\!\! \int_{\RR^3} \tilde U \cdot \nabla \Pi \, \bar w \, dyds
+  \alpha \int_0^S\!\! \int_{\RR^3} E \, \bar w \, dyds.
\end{align*}
Here we have also recalled that the error term $E$ is given in~\eqref{eq:E:is:theta:derivative}. Using~\eqref{eq:E:bnd}, \eqref{eq:decay:all:k}, \eqref{eq:decay:pressure:k}, \eqref{eq:fluct:pointwise}, \eqref{eq:bar:w}, and recalling that $\nabla \Pi = \nabla P + \nabla U \cdot U  + \frac 12 U + \frac 12 \nabla U \cdot y$, we deduce from the above identity that
\begin{align}
\frac{1}{S} \! \int_0^S\!\! \int_{\RR^3} |\Omega(\cdot,s)|^2 \, \bar w \, dy ds
\leq  \frac{1}{S} \! \int_0^S\!\! \int_{\RR^3} |\tilde U| \, |\nabla \Pi| \, \bar w \, dyds
+   \frac{|\alpha|}{S} \! \int_0^S\!\! \int_{\RR^3} |E| \, \bar w \, dyds
\leq C^\prime \bigl( S + |\alpha| \bigr) ,
\label{eq:weighted:total}
\end{align}
where 
\[
C^\prime:= M  ( C_{P,1} + C_{U,0} + C_{U,1} + C_{U,0} C_{U,1})C_{U,s}  \int_{\RR^3} \tfrac{1}{(1+|y|)^2}  e^{-\frac{3}{16} |y|^2} \, dy  + C_E M \int_{\RR^3}   e^{-\frac{3}{16} |y|^2} dy 
\] 
is a constant that only depends on $C_{U,0}$; it is independent of $\alpha$ and $S$.

Restricting the space integral on the left side of~\eqref{eq:weighted:total} to $B_{\bar R}$ (where $\bar R = \sqrt{8 C_{U,1}}$ is as in Proposition~\ref{prop:local:vort}), and using the Gaussian lower bound on $\bar w$ from~\eqref{eq:bar:w}, we obtain the \emph{time-averaged} ball estimate
\[
\frac1S \! \int_0^S \|\Omega(\cdot,s)\|_{L^2(B_{\bar R})}^2\, ds
\leq  e^{\frac{5}{16}\bar R^2} m^{-1} \cdot 
\frac1S \! \int_0^S\!\!\int_{\RR^3}|\Omega|^2\bar w\, dy\, ds
\le C^\prime e^{\frac{5}{16}\bar R^2} m^{-1}  \bigl( S + |\alpha| \bigr)
.
\]
Jensen's inequality then gives 
\[
\|\mean{\Omega}_s\|_{L^2(B_{\bar R})}^2 
\leq \frac1S\int_0^S\|\Omega\|_{L^2(B_{\bar R})}^2 ds \le C^\prime e^{\frac{5}{16}\bar R^2} m^{-1}  ( S + |\alpha|  ).
\] 
Finally, writing $\Omega(\cdot,s) = \mean{\Omega}_s + \tilde\Omega(\cdot,s)$ and invoking the fluctuation bound~\eqref{eq:fluct:L2},  we conclude that for \emph{every} $s$,
\begin{equation*}
\|\Omega(\cdot,s)\|_{L^2(B_{\bar R})}
\le \|\mean{\Omega}_s\|_{L^2(B_{\bar R})} + \|\tilde\Omega(\cdot,s)\|_{L^2(\RR^3)}
\le \sqrt{C^\prime} e^{\frac{5}{32}\bar R^2} m^{-\frac 12}  \sqrt{S + |\alpha|} + 10 C_{U,s} S.
\end{equation*}
Since $S\leq 1$ and $|\alpha|\leq 1$, the above estimate then gives the pointwise-in-time local enstrophy bound
\begin{equation}
\|\Omega(\cdot,s)\|_{L^2(B_{\bar R})}
\leq C^{\prime\prime}  \sqrt{S + |\alpha|},
\label{eq:pointwise:ball}
\end{equation}
for all $s\in [0,S]$, where
\[
C^{\prime\prime}:= \sqrt{C^\prime} e^{\frac{5}{32}\bar R^2} m^{-\frac 12} + 10 C_{U,s}.
\]
Note that $C^\prime$, $\bar R$, $m$, and $C_{U,s}$ only depend on $C_{U,0}$, and thus so does $C^{\prime\prime}$.

\subsection{Proof of the main result for RDSS solutions when $|\alpha|+S$ is small}\label{subsec:mainRDSSsmall}
Analogously to~\eqref{eq:vort}, taking the curl of~\eqref{eq:profile:RDSS:a} gives the vorticity evolution, which is just~\eqref{eq:vort} with an extra $\p_s\Omega$ term on the left side:
\[
\p_s \Omega + \alpha \bigl( J\Omega-  Jy\cdot\nabla\Omega\bigr) + \Omega + \tfrac{1}{2}y\cdot\nabla\Omega -\Delta\Omega +U\cdot\nabla \Omega=\Omega\cdot\nabla U.   
\]
Integrating this evolution against $\Omega$ (justified in light of~\eqref{eq:decay:grad}) leads to
\begin{equation}\label{eq:enstrophy:s}
\tfrac12\tfrac{d}{ds}\|\Omega\|_{L^2(\RR^3)}^2 + \tfrac14\|\Omega\|_{L^2(\RR^3)}^2 + \|\nabla\Omega\|_{L^2(\RR^3)}^2 = \int_{\RR^3}\Omega^i \p_i U^j \Omega^j\, dy,
\end{equation}
for all $s \in [0,S]$. As in the proof of Proposition~\ref{prop:local:vort}, by~\eqref{eq:decay:grad} (which holds due to Lemma~\ref{lem:many:gradients}) we have that $|\nabla U(y,s)|\le\tfrac18$ for $|y|\ge\bar R = \sqrt{8C_{U,1}}$ and all $s \in [0,S]$. Absorbing the corresponding part of the vortex-stretching integral appearing on the right side of~\eqref{eq:enstrophy:s}, and estimating the remainder over $B_{\bar R}$ exactly as in~\eqref{eq:vort:est}, pointwise for $s \in [0,S]$ we obtain
\[
\tfrac12\tfrac{d}{ds}\|\Omega\|_{L^2(\RR^3)}^2
+ 
\tfrac18\Big(\|\Omega\|_{L^2(\RR^3)}^2 + \|\nabla\Omega\|_{L^2(\RR^3)}^2\Big)
\leq C_\Omega\, \|\Omega(\cdot,s)\|_{L^2(B_{\bar R})}\Big(\|\Omega\|_{L^2(\RR^3)}^2 + \|\nabla\Omega\|_{L^2(\RR^3)}^2\Big),
\]
where $C_{\Omega}>0$ is a universal constant. 
Integrating the above inequality in $s\in [0,S]$, using the periodicity in time of $\Omega$, and appealing to the pointwise-in-$s$ bound~\eqref{eq:pointwise:ball}, we arrive at 
\[
\int_0^S \!\! \Big(\|\Omega\|_{L^2(\RR^3)}^2 + \|\nabla\Omega\|_{L^2(\RR^3)}^2\Big) ds
\leq 8 C_\Omega\, C^{\prime\prime}  \sqrt{S + |\alpha| } \, 
\int_0^S \!\!  \Big(\|\Omega\|_{L^2(\RR^3)}^2 + \|\nabla\Omega\|_{L^2(\RR^3)}^2\Big) ds.
\]
Therefore, if $|\alpha|,S \leq 1$ satisfy 
\[
2 \log(\lambda) + |\alpha| = S+ |\alpha| <  \frac{1}{(8 C_\Omega\, C^{\prime\prime})^2},
\] 
with $C_{\Omega}$ a universal constant and $C^{\prime\prime}$ a constant that only depends on $C_{U,0}$, by the previous inequality we obtain
\[
\int_0^S \Big(\|\Omega(\cdot,s)\|_{L^2(\RR^3)}^2 + \|\nabla\Omega(\cdot,s)\|_{L^2(\RR^3)}^2\Big) ds = 0.
\]
Hence $\Omega\equiv 0$ on $\RR^3\times[0,S]$. Since $\nabla\cdot U(\cdot,s) = 0$, $\nabla\times U(\cdot,s) = \Omega(\cdot,s) = 0$, and $U(\cdot,s) \in L^4(\RR^3)$ for each $s$, we obtain that $U\equiv 0$ on $\RR^3\times[0,S]$. This concludes the proof of Theorem~\ref{thm:main:RDSS} when $|\alpha|$ is sufficiently small, and $\lambda>1$ is sufficiently close to $1$.

\section{DSS and RDSS: proof of the main result for $\alpha$ large and $\lambda$ close to $1$}
\label{sec:DSS:RDSS:Ben}

The proof of the large $|\alpha|$ case in Theorem~\ref{thm:main:RDSS} follows primarily by making minor adjustments to the proof of the small $|\alpha|$ case in Theorem~\ref{thm:main:RDSS}, and combining these arguments with some of the components of the large $|\alpha|$ portion of Theorem~\ref{thm:main}. 

We remark that if $\underline{\lambda}$ is close to $1$, the assumption $1<\lambda < \underline{\lambda}^{\frac{1}{1+\alpha^2}}$ appearing in Theorem~\ref{thm:main:RDSS} is equivalent to the statement that the period $S = 2 \log(\lambda)$ satisfies 
\begin{equation}
\label{eq:moral:of:the:story}
(1+\alpha^2) S < 2 \log \underline{\lambda} \ll 1.
\end{equation}
In other words, the condition on $\lambda$ ensures that the temporal period $S$ is sufficiently small relative to $\alpha$. This assumption is required for proving the following variation of Lemma~\ref{lem:many:gradients}, which is the main additional technical input required for the proof of the large $|\alpha|$ case in Theorem~\ref{thm:main:RDSS}.
\begin{lemma}
\label{lem:many:gradients2}
Under the assumptions of Theorem~\ref{thm:main:RDSS}, the profile $U(\cdot,s)$ satisfies the pointwise bound~\eqref{eq:decay} for all $s\in [0,S]$, with the same constant $C_{U,0}>0$ from the Type~I upper bound~\eqref{eq:type:I}. The space-gradients of $U$ satisfy~\eqref{eq:decay:grad} and the pressure $P(\cdot,s)$ obeys estimate~\eqref{eq:decay:pressure} for every $s\in[0,S]$, with the same $(\alpha,S)$-independent constants $C_{U,1}, C_{U,2}, C_{P,0}$ as in Lemma~\ref{lem:gradient}. Moreover, there exists an $(\alpha,S)$-independent constant $C_{U,s} = C_{U,s}(C_{U,0})>0$ such that
\begin{equation}
\label{eq:ps:U:bound2}
|\p_s U(y,s)| \le \frac{C_{U,s}S(1+|\alpha|)^2}{1+|y|},
\qquad
|\nabla \p_s U(y,s)| \le \frac{C_{U,s}S(1+|\alpha|)^2}{1+|y|^2},
\end{equation}
for all $(y,s)\in\RR^3\times[0,S]$.
\end{lemma}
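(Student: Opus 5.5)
The plan is to first observe that the statements concerning~\eqref{eq:decay}, \eqref{eq:decay:grad} and~\eqref{eq:decay:pressure} do not use the assumption $|\alpha|\le 1$. In the proof of Lemma~\ref{lem:many:gradients}, the bounds~\eqref{eq:decay:all:k} and~\eqref{eq:decay:pressure:k} came from the rescaled interior regularity argument in the original $(x,t)$ variables, together with the representation $P=R_iR_j(U^iU^j)$. Neither step involves $\alpha$, because $R(\alpha s)\in SO(3)$ only rotates the base point $x_*$. So these bounds transfer verbatim for all $\alpha\in\RR$, with constants depending only on $C_{U,0}$ (and $k$, $\sigma$). The new content is~\eqref{eq:ps:U:bound2}. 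The naive argument of Lemma~\ref{lem:many:gradients}, namely solving~\eqref{eq:profile:RDSS:a} for $\p_s U$, only gives $|\p_s U|\lesssim (1+|\alpha|)(1+|y|)^{-1}$. It never produces the factor $S$, which must come from periodicity.

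\textbf{Step 1: first-order bounds with all spatial derivatives.} From~\eqref{eq:profile:RDSS:a}, $\p_s U=-\alpha\mathcal{R}U+\Delta U-\tfrac12 U-\tfrac12(y\cdot\nabla)U+\mathcal{N}$. Since $|Jy|\le|y|$, the bound~\eqref{eq:decay:all:k} gives $|\nabla^k\mathcal{R}U|\lesssim_k (1+|y|)^{-(k+1)}$. Indeed, derivatives falling on $Jy$ only produce terms $J\nabla^{k}U$, which are of the same size. Using also~\eqref{eq:decay:pressure:k} for $\nabla^{k+1}P$, we obtain for every $k\ge0$
\[
|\nabla^k\p_s U(y,s)|\le \frac{C_k(1+|\alpha|)}{(1+|y|)^{k+1}},
\]
uniformly in $s\in[0,S]$. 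The pressure contributes decay $(1+|y|)^{-(3-\sigma+k)}$, which is better than needed.

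\textbf{Step 2: second time derivative.} We differentiate~\eqref{eq:profile:RDSS:a} in $s$, which gives
\[
\p_s^2U=-\alpha\mathcal{R}\p_sU+\bigl(\Delta-\tfrac12-\tfrac12 y\cdot\nabla\bigr)\p_sU-(\p_sU\cdot\nabla)U-(U\cdot\nabla)\p_sU-\nabla\p_sP,
\]
with $\p_sP=2R_iR_j(U^i\p_sU^j)$. Step~1 with $k=0,1,2$ (and $k=1,2,3$ for the gradient) controls every term. In particular $|\alpha\mathcal{R}\p_sU|\lesssim|\alpha|(1+|y|)|\nabla\p_sU|+|\alpha||\p_sU|\lesssim(1+|\alpha|)^2(1+|y|)^{-1}$. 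For $\nabla\p_sP$ one needs the singular-integral kernel estimate applied to $U\otimes\p_sU$ and its gradient, with pointwise decay $(1+|y|)^{-2}$, exactly as in the derivation of~\eqref{eq:decay:pressure:k}. The outcome is
\[
|\p_s^2U|\le \frac{C(1+|\alpha|)^2}{1+|y|},\qquad |\nabla\p_s^2U|\le\frac{C(1+|\alpha|)^2}{1+|y|^2}.
\]

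\textbf{Step 3: periodicity.} Since $U$ is $S$-periodic in $s$, we have $\int_0^S\p_sU\,ds=0$. Arguing as in Lemma~\ref{lem:fluct}, $\p_sU(\cdot,s)=\p_sU-\mean{\p_sU}_s=\tfrac1S\int_0^S\int_{s'}^{s}\p_\tau^2U\,d\tau\,ds'$, so that $|\p_sU|\le S\sup_\tau|\p_\tau^2U|$, and likewise after applying $\nabla$. Combined with Step~2, this yields~\eqref{eq:ps:U:bound2} with $C_{U,s}$ depending only on $C_{U,0}$.

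I expect the main obstacle to be the bookkeeping in Step~2. One must make sure that the unbounded coefficient $Jy$ in $\alpha\mathcal{R}$ never costs decay. It is always paired with one extra spatial derivative, which is exactly why Step~1 is needed for all $k\le3$ and not only $k\le1$. The other delicate point is the pressure term $\nabla\p_sP$, which requires the kernel estimate for the Riesz transforms rather than $L^p$ bounds in order to keep pointwise decay.
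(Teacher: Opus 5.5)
Your proposal is correct and follows the paper's proof essentially step for step. First, the $\alpha$-uniform bounds \eqref{eq:decay:all:k2} carry over from Lemma~\ref{lem:many:gradients}. Next come first-order bounds $|\nabla^k\p_sU|\lesssim(1+|\alpha|)(1+|y|)^{-(k+1)}$ for $k\le 3$, together with the corresponding pressure bounds for $\nabla^k\p_sP$. Then \eqref{eq:profile:RDSS:a} is differentiated in $s$ to get $(1+|\alpha|)^2$ bounds on $\p_s^2U$ and $\nabla\p_s^2U$. Finally, the Poincar\'e inequality in $s$ gains the factor $S$, using that $\p_sU$ has zero mean by periodicity; this matches the paper's route exactly.
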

\begin{proof}[Proof of Lemma~\ref{lem:many:gradients2}]
The starting point of the proof is the same as Lemma \ref{lem:many:gradients}. By the same argument, one can deduce the bounds
\begin{equation}
\label{eq:decay:all:k2}
|\nabla_y^k U(y,s)| \leq    
\frac{C_{U,k}}{(1+|y|)^{k+1}},\qquad |\nabla_y^k P(y,s)| \leq 
\frac{C_{P,k}}{(1+|y|)^{\,2-\sigma+k}}, \qquad k\ge 0,
\end{equation}
for all $(y,s) \in \RR^3 \times [0,S]$, uniformly in $\alpha$. Using these bounds, we deduce from the equation \eqref{eq:profile:RDSS:a} the bound 
\begin{equation*}
|\partial_s\nabla^k U(y,s)|\leq \frac{C_{U,s}(1+|\alpha|)}{1+|y|^{k+1}},
\qquad k \in \{ 0,1,2,3\}.    
\end{equation*}
Arguing as in Lemma \ref{lem:gradient}, these bounds then imply
\begin{equation*}
|\nabla^{k}\partial_s P (y,s)|\leq \frac{C_{U,s}(1+|\alpha|)}{(1+|y|)^{2-\sigma+k}},
\qquad k \in \{ 0,1,2\}.    
\end{equation*}
A-priori, these bounds are worse than \eqref{eq:ps:U:bound2} because we do not have the smallness factor $S$ to compensate for the large $\alpha$ parameter. However, by applying $\partial_s$ to \eqref{eq:profile:RDSS:a} and using the above bounds, along with \eqref{eq:decay:all:k2}, we obtain
\begin{equation*}
|\partial_s^2U(y,s)|\leq \frac{C_{U,s}(1+|\alpha|)^2}{1+|y|},\qquad |\nabla\partial_s^2U(y,s)|\leq \frac{C_{U,s}(1+|\alpha|)^2}{1+|y|^2}.   
\end{equation*}
Recalling that $U$ is $[0,S]$-periodic in time, Poincar\'e's inequality in $s$ then gives the bound \eqref{eq:ps:U:bound2}.
\end{proof}
The main purpose of the above lemma is to provide us with the following analogue of Lemma~\ref{lem:source:bound}.
\begin{lemma}\label{lem:source:boundRDSS}
Let $u$ satisfy the conditions of Theorem~\ref{thm:main:RDSS} with parameters $\alpha, \overline{\alpha},\lambda$, and $\underline{\lambda}$.  Assume that the smooth incompressible vector field $U\colon \RR^3 \times [0,S] \to \RR^3$ satisfies the bounds~\eqref{eq:decay:all:k2}. Define
\begin{equation}
\mathcal{N} := - (U \cdot \nabla) U - \nabla P-\partial_sU,
\label{eq:source:def2}
\end{equation}
where $P = R_i R_j (U^i U^j)$ (as in the proof of Lemma~\ref{lem:gradient}). For any $\eps \in (0,1]$, there exist constants $C_\eps = C_\eps(\eps,C_{U,0})>0$ and $\delta_{\eps}=\delta(\eps, C_{U,0})>0$ such that 
\begin{equation}
\| (\mathcal{N})_a \|_{L^2_\mu} \leq \eps + C_\eps   \|(U)_a\|_{L^2_\mu} + C_\eps   \| \nabla (U)_a \|_{L^2_\mu},
\label{eq:source:bound2}
\end{equation}
holds uniformly for all $s\in [0,S]$, 
provided that  $S (1+|\alpha|)^2 \leq \delta_\eps$.
\end{lemma}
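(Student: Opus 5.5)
The bound splits into the two pieces already handled in Lemma~\ref{lem:source:bound} plus one new term:
\[
(\mathcal{N})_a = -\bigl((U\cdot\nabla)U\bigr)_a - (\nabla P)_a - (\p_s U)_a .
\]
For each fixed $s\in[0,S]$, the profile $U(\cdot,s)$ is smooth and divergence-free. By Lemma~\ref{lem:many:gradients2} (equivalently~\eqref{eq:decay:all:k2}) it satisfies~\eqref{eq:decay} and~\eqref{eq:decay:grad} with the $(\alpha,S)$-independent constants $C_{U,0},C_{U,1}$. Moreover $P(\cdot,s)=R_iR_j(U^iU^j)(\cdot,s)$. The proof of Lemma~\ref{lem:source:bound} uses only these pointwise bounds at a single time: the decomposition $U=\mean{U}_\theta+(U)_a$, the identity~\eqref{eq:nabla:P:a}, and the splitting~\eqref{eq:nabla:P:a:2} at radius $L_\eps$. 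It therefore applies verbatim, pointwise in $s$. Invoking it with $\eps$ replaced by $\eps/2$ gives
\[
\|((U\cdot\nabla)U)_a+(\nabla P)_a\|_{L^2_\mu}\le \tfrac{\eps}{2}+C_\eps\|(U)_a\|_{L^2_\mu}+C_\eps\|\nabla(U)_a\|_{L^2_\mu},
\]
uniformly in $s$, with $C_\eps=C_\eps(\eps,C_{U,0})$ independent of $\alpha$ and $S$.

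For the time-derivative term I would use item~\emph{(iii)} of Lemma~\ref{lem:useful:inner:product}, namely $\|(\p_sU)_a\|_{L^2_\mu}\le\|\p_sU\|_{L^2_\mu}$, together with the first bound in~\eqref{eq:ps:U:bound2}. This gives
\[
\|\p_s U(\cdot,s)\|_{L^2_\mu}\le C_{U,s}S(1+|\alpha|)^2\Bigl(\int_{\RR^3}\frac{e^{-|y|^2/4}}{(1+|y|)^2}\,dy\Bigr)^{1/2}\le K\,C_{U,s}\,S(1+|\alpha|)^2,
\]
where $K$ is a universal constant. Choosing $\delta_\eps:=\eps/(2KC_{U,s})$, which depends only on $\eps$ and $C_{U,0}$, makes this contribution at most $\eps/2$ whenever $S(1+|\alpha|)^2\le\delta_\eps$. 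Summing the two estimates yields~\eqref{eq:source:bound2} uniformly for $s\in[0,S]$.

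The only point needing care is the input~\eqref{eq:ps:U:bound2}. A naive bound on $\p_sU$ read off directly from~\eqref{eq:profile:RDSS:a} is only $O(1+|\alpha|)$, because the rotation term $\alpha\mathcal{R}U$ appears there with no smallness. The gain of the factor $S$ comes from periodicity: applying the Poincar\'e inequality in $s$ to $\p_sU$, which has zero time-mean, converts the $O((1+|\alpha|)^2)$ bound on $\p_s^2U$ into an $O(S(1+|\alpha|)^2)$ bound on $\p_sU$. This is exactly what is provided by Lemma~\ref{lem:many:gradients2}, so the remaining steps are routine bookkeeping of constants, checking that none depends on $\alpha$ or $S$ except through the product $S(1+|\alpha|)^2$.
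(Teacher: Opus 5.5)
Your proposal is correct and follows essentially the same route as the paper: you apply Lemma~\ref{lem:source:bound} pointwise in $s$ with smallness $\eps/2$ to the convective and pressure terms, and you bound $\|(\p_s U)_a\|_{L^2_\mu}\le\|\p_s U\|_{L^2_\mu}\lesssim C_{U,s}S(1+|\alpha|)^2$ via Lemma~\ref{lem:many:gradients2}, choosing $\delta_\eps$ so that this contribution is at most $\eps/2$. Your remark that the factor $S$ comes from the temporal Poincar\'e inequality for the zero-mean periodic function $\p_s U$ matches how the paper proves~\eqref{eq:ps:U:bound2}.
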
   
\begin{proof}[Proof of Lemma~\ref{lem:source:boundRDSS}]
The estimate for $-(U\cdot\nabla U)-\nabla P$ is identical to that in Lemma~\ref{lem:source:bound}, except that we take smallness factor $\frac{\eps}{2}$ instead of $\eps$. To deal with the remaining term $\partial_sU$, we simply apply Lemma \ref{lem:many:gradients2} to obtain (say), $\|(\partial_s U)_a \|_{L^2_{\mu}} \leq \|\partial_sU\|_{L^2_{\mu}} \leq C_{U,s} S (1+|\alpha|)^2 \| (1+|y|)^{-1}\|_{L^2_\mu}  \leq \frac{\eps}{2} $.    
\end{proof}
By arguing exactly as in the proof of Proposition \ref{prop:large:alpha}, we obtain smallness of $\alpha\mathcal{R}U$ in $L_{\mu}^2$,
\begin{proposition}
\label{prop:large:alpha2}
Let $u$ satisfy the conditions of Theorem~\ref{thm:main:RDSS}  with parameters $\alpha, \overline{\alpha},\lambda$ and $\underline{\lambda}$.  Suppose that the profiles $(U,P)$ satisfy the bounds~\eqref{eq:decay:all:k2}. For any $\eps \in (0,1]$, there exist constants $A_\eps = A(\eps,C_{U,0})\geq 1$ and $\delta_{\eps}=\delta(\eps,C_{U,0})$, such that for all $|\alpha|\geq A_{\eps}$, if $S (1+|\alpha|)^2 \leq \delta_\eps$ we have 
\begin{equation}
|\alpha|  \, \| \mathcal{R} U\|_{L^2_\mu}
\leq  
\eps,
\label{eq:large:alpha:main:2}
\end{equation}
uniformly for all $s\in[0,S]$.
\end{proposition}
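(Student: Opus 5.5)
The plan is to repeat the proof of Proposition~\ref{prop:large:alpha} pointwise in $s\in[0,S]$, using Lemma~\ref{lem:source:boundRDSS} in place of Lemma~\ref{lem:source:bound}. First I rewrite~\eqref{eq:profile:RDSS:a} at each fixed $s$ as
\[
\alpha \mathcal{R} U + \tfrac12 U + \bigl(-\Delta + \tfrac12 y\cdot\nabla\bigr) U = \mathcal{N},
\]
with $\mathcal{N}$ now given by~\eqref{eq:source:def2}, so that it absorbs $-\p_s U$. This puts the equation in exactly the form~\eqref{eq:new:profile:a}, with $s$ a frozen parameter.

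Second, I check that the proof of Lemma~\ref{lem:alpha:coercive} uses only the algebraic structure of~\eqref{eq:new:profile:a} together with the bounds~\eqref{eq:decay}--\eqref{eq:decay:grad}, which hold uniformly in $s$ by Lemma~\ref{lem:many:gradients2}. The structure it uses is:
\begin{enumerate}[label=(\roman*)]
\item $\mathcal{R}$ is skew-adjoint on $L^2_\mu$;
\item $\mathcal{R}$ commutes with the self-adjoint operator $-\Delta+\tfrac12 y\cdot\nabla$;
\item $(\cdot)_a$ commutes with the linear operator on the left side;
\item the angular Poincar\'e inequality~\eqref{eq:theta:Poincare} holds.
\end{enumerate}
The decay bounds justify the integrations by parts against the Gaussian $\mu$. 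Hence~\eqref{eq:alpha:coercive} holds for every $s\in[0,S]$ with the new $\mathcal{N}$.

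Third, given $\eps$, I apply Lemma~\ref{lem:source:boundRDSS} with $\eps/6$. This gives constants $C_{\eps}$ and $\delta_\eps$ depending only on $(\eps,C_{U,0})$ such that, whenever $S(1+|\alpha|)^2\le\delta_\eps$,
\[
\|(\mathcal{N})_a\|_{L^2_\mu}\le \tfrac{\eps}{6}+C_\eps\|(U)_a\|_{L^2_\mu}+C_\eps\|\nabla(U)_a\|_{L^2_\mu}
\]
uniformly in $s$. Inserting this into~\eqref{eq:alpha:coercive} and using Young's inequality exactly as before, I absorb the $(U)_a$ and $\nabla(U)_a$ terms. This yields
\[
|\alpha|\,\|\mathcal{R}U\|_{L^2_\mu}^2\le \tfrac12\eps\|\mathcal{R}U\|_{L^2_\mu}+\tfrac{27}{4}C_\eps^2\|\mathcal{R}U\|_{L^2_\mu}^2 .
\]
Taking $A_\eps=\tfrac{27}{2}C_\eps^2$ then gives~\eqref{eq:large:alpha:main:2} for each $s$, either trivially if $\mathcal{R}U(\cdot,s)=0$, or after dividing by $\|\mathcal{R}U\|_{L^2_\mu}$.

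The only point needing care is uniformity. The constants $C_\eps$ and $\delta_\eps$ must not depend on $\alpha$ or $S$. This holds because the derivative bounds~\eqref{eq:decay:all:k2} are $\alpha$-independent. The large-$\alpha$ dependence of $\p_s U$ is handled entirely by the hypothesis $S(1+|\alpha|)^2\le\delta_\eps$ through~\eqref{eq:ps:U:bound2}.
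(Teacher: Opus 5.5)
Your proposal is correct and follows the same route as the paper. The paper proves this by rerunning the proof of Proposition~\ref{prop:large:alpha} at each fixed $s$, with $\p_s U$ absorbed into $\mathcal{N}$ as in~\eqref{eq:source:def2}, Lemma~\ref{lem:alpha:coercive} applied pointwise in $s$, and Lemma~\ref{lem:source:boundRDSS} in place of Lemma~\ref{lem:source:bound}; your check that only the algebraic structure and the $s$-uniform, $\alpha$-independent bounds~\eqref{eq:decay:all:k2} are needed is exactly the detail the paper leaves implicit.
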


To conclude the proof of the large $|\alpha|$ case in Theorem~\ref{thm:main:RDSS}, one simply follows  the same reasoning as in Section~\ref{sec:DSS:RDSS}; we only sketch the minor adjustments that are needed. Fix $0<\eps=\eps(C_{U,0})\ll1$, to be chosen below, and set
\[
\eta:=S(1+|\alpha|)^2 = 2\log (\lambda) (1+|\alpha|)^2.
\]
By first choosing $\overline{\alpha}$ sufficiently large, we may ensure that $|\alpha|\geq\overline{\alpha}\geq A_\eps$, where $A_\eps$ is as in Proposition~\ref{prop:large:alpha2}. We then choose $\underline{\lambda}>1$ sufficiently close to $1$, in terms of $\eps$ and $C_{U,0}$ only, as follows: the condition $1<\lambda<\underline{\lambda}^{\frac{1}{1+\alpha^2}}$ implies
\[
\eta
=2\log(\lambda)(1+|\alpha|)^2
\leq 2\log(\underline{\lambda})\frac{(1+|\alpha|)^2}{1+\alpha^2}
\leq 4\log(\underline{\lambda}),
\]
and hence (by decreasing $\underline{\lambda}$ if necessary) we may assume that $\eta\leq\delta_\eps$, as required in Proposition~\ref{prop:large:alpha2}, and may use $\eta\ll1$ as a smallness parameter in the proof, instead of $S$ (see~\eqref{eq:moral:of:the:story}).

After also ensuring that $S\leq1$, Lemma~\ref{lem:many:gradients2} and the proof of Lemma~\ref{lem:fluct} give
\[
|\tilde U(y,s)|\les \frac{ \eta}{1+|y|},
\qquad
|\nabla\tilde U(y,s)|\les \frac{ \eta}{1+|y|^2},
\qquad
\|\tilde\Omega(\cdot,s)\|_{L^2(\RR^3)}\les  \eta,
\]
uniformly for $s\in[0,S]$, where the implicit constants depend only on $C_{U,0}$. Then, we construct the weight $\bar w$ associated to the time-averaged profile $\mean{U}_s$, exactly as in~\eqref{eq:bar:w}. We multiply~\eqref{eq:L:Pi:alpha:s:good} by $\bar w$, integrate in space and then average over one temporal period. The $\p_s$-derivative term vanishes by periodicity, while the term involving $\tilde U\cdot\nabla\Pi$ is bounded as $\les \eta$, using the above $\tilde U$ estimates and~\eqref{eq:decay:all:k2}. For the remaining term, we use~\eqref{eq:E:is:theta:derivative}, the Gaussian upper bound for $\bar w$, and the argument in~\eqref{eq:Omega:large:alpha:1}, together with Proposition~\ref{prop:large:alpha2}, to obtain
\[
\frac1S\int_0^S\!\!\int_{\RR^3}|\Omega|^2\bar w\,dy\,ds
\les
\eta
+
\frac{|\alpha|}{S}\int_0^S
\|\mathcal{R}U(\cdot,s)\|_{L^2_\mu}\,ds
\les \eta+\eps.
\]
Using the Gaussian lower bound for $\bar w$, Jensen's inequality, and the above estimate for $\tilde\Omega$, we conclude (with $\bar R$ as in Proposition~\ref{prop:local:vort}) that
\[
\|\Omega(\cdot,s)\|_{L^2(B_{\bar R})}
\les  \sqrt{\eta+\eps}+ \eta,
\]
uniformly for $s\in[0,S]$; the implicit constant  depends only on $C_{U,0}$. To conclude, we first fix $\eps=\eps(C_{U,0})$ sufficiently small, and then choose $\underline{\lambda}=\underline{\lambda}(C_{U,0})$ sufficiently close to $1$, so that the right side of the above estimate is smaller than the universal threshold required in the argument of Subsection~\ref{subsec:mainRDSSsmall}. Integrating the enstrophy inequality~\eqref{eq:enstrophy:s} over one period and using the periodicity of $\Omega$, we deduce that $\Omega\equiv0$, completing the proof of the large $|\alpha|$ case in Theorem~\ref{thm:main:RDSS}.

\section{Approximate local self-similarity implies no Type~I blowup}
\label{sec:local}
The goal of this section is to prove Theorem~\ref{thm:local:theorem}. Recall that for a given $z_0 = (x_0,t_0)$, parabolic cylinders are denoted as
\[
Q_r(z_0) := B_r(x_0)\times(t_0 - r^2, t_0], \qquad Q_r := Q_r(0,0).
\]
The interior of a parabolic cylinder will be denoted as $Q_r^{\mathsf{o}}(z_0)$.

Given a smooth solution $(u,p)$ of~\eqref{eq:NSE} on $B_1\times[-1,0)$, we define similarity variables and profiles in the usual way
\begin{subequations}
\label{eq:local:ss:variables}
\begin{align}
&y = \frac{x}{\sqrt{-t}},   &&s = -\log(-t), \\
&U(y,s) := \sqrt{-t}\, u(x,t),   &&P(y,s) := (-t)\, p(x,t).
\end{align}
\end{subequations}
It follows that $(U, P)$ is well-defined on the set $\{(y,s) \in \RR^3 \times [0,\infty) \colon |y| < e^{\frac{s}{2}}\}$; on this set,\footnote{We do not make global-in-space or decay assumptions on the solutions; all objects in this section are defined only on the indicated regions of spacetime.} it solves the time-dependent self-similar Navier-Stokes system (cf.~\eqref{eq:profile:RDSS} with $\alpha = 0$):
\begin{subequations}
\label{eq:local:profile}
\begin{align}
\partial_s U + \tfrac12 U + \tfrac12 (y\cdot\nabla)U - \Delta U + (U\cdot\nabla)U + \nabla P &= 0,
\label{eq:local:profile:a}
\\
\nabla\cdot U &= 0.
\end{align}
\end{subequations}

\subsection{Consequences of the assumed local Type~I bounds}
We first record a quantitative interior estimate for bounded solutions, already used in rescaled form in the proof of Lemma~\ref{lem:gradient}.
\begin{lemma}[Interior regularity]
\label{lem:int:reg}
Let $r>0$, $A>0$, and let $(u,p)$ be a smooth solution to~\eqref{eq:NSE} on $Q_r(z_0)$ with $|u|\leq A \, r^{-1}$ on $Q_r(z_0)$. Then for every $\eps\in(0,1)$ and every integer $k\geq 0$,
\begin{equation*}
|\nabla_x^ku(x,t)|\leq C(k,\eps,A)\, r^{-k-1} ,  
\end{equation*}
for all $(x,t)\in Q_{r(1-\eps)}(z_0)$.
\end{lemma}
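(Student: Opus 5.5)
The plan is to rescale to a unit cylinder and apply interior regularity for bounded Navier--Stokes solutions, in the form already used in the proof of Lemma~\ref{lem:gradient} (footnote~\ref{foot:int:reg}). Set $\tilde u(\tilde x,\tilde t) := r\,u(x_0+r\tilde x,\,t_0+r^2\tilde t)$ and $\tilde p(\tilde x,\tilde t):=r^2 p(x_0+r\tilde x,t_0+r^2\tilde t)$, where $z_0=(x_0,t_0)$. Then $(\tilde u,\tilde p)$ is a smooth solution of~\eqref{eq:NSE} on $Q_1$ with $\|\tilde u\|_{L^\infty(Q_1)}\le A$. The target bound becomes $|\nabla^k_{\tilde x}\tilde u|\le C(k,\eps,A)$ on $Q_{1-\eps}$, because $\nabla^k_x u = r^{-k-1}(\nabla^k_{\tilde x}\tilde u)$ evaluated at the rescaled point. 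So it suffices to treat $r=1$, $z_0=0$.

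For the unit-scale estimate I would avoid any pressure hypothesis, using Serrin's approach~\cite{Serrin62}. First, pass to the vorticity $\omega=\nabla\times u$, which solves $\partial_t\omega-\Delta\omega=\nabla\times\nabla\cdot(\omega\text{-free form})$. More concretely, $\partial_t\omega-\Delta\omega = -\nabla\times\nabla\cdot(u\otimes u)$, and the right side is a second derivative of a bounded function. Localize with a cutoff $\chi$ supported in $Q_1$ and equal to $1$ on a slightly smaller cylinder. Then treat $\chi\omega$ as the solution of a heat equation with forcing in divergence form, involving $u\otimes u$, $u$ and derivatives of $\chi$. Parabolic $L^q$ and Schauder theory for the heat equation, applied with negative-order forcing, gives $\omega\in L^q$ and then $C^{\gamma}$ on a smaller cylinder, with bounds depending only on $A$.

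Next, recover $\nabla u$ locally from $\omega$ through the div--curl system $-\Delta(\chi u)=\nabla\times(\chi\omega)+(\text{lower order terms involving }\nabla\chi, u)$, slice by slice in time. Here no time regularity of $u$ is needed, and the pressure never appears, since the vorticity equation eliminates it. Iterating, I would alternately gain spatial regularity of $\omega$ from the heat equation, whose forcing $\nabla\times(u\cdot\nabla u)$ now has more spatial smoothness, and of $u$ from the elliptic system. Each step shrinks the cylinder by a factor summing to at most $\eps$, which gives $\|\nabla^k u\|_{L^\infty(Q_{1-\eps})}\le C(k,\eps,A)$ for every $k$. The bookkeeping is polynomial in $A$ at each step.

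The main obstacle is the first step: starting only from $u\in L^\infty$, the forcing $\nabla\times\nabla\cdot(u\otimes u)$ is merely a second derivative of $L^\infty$. I would handle this with parabolic Calder\'on--Zygmund estimates, which give $\omega\in L^q_tW^{-1,q}_x$ locally, together with the fact that $\omega=\nabla\times u$ already lies in $W^{-1,\infty}$. Then bootstrap via $\nabla\times(u\cdot\nabla u)=\nabla\times\nabla\cdot(u\otimes u)$, using energy estimates for $\chi\omega$ to reach $\omega\in L^2_tH^1_x$ locally. Once $\nabla u\in L^q$ for some $q>5$, the remaining iterations are routine. An acceptable alternative is to cite the quantitative form of this argument in~\cite[Lemma A.2]{ChenStrainTsaiYau08}, iterating interior regularity for the non-stationary Stokes system.
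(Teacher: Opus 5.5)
Your proposal is correct and matches the paper's proof. The paper also rescales to $v(\tilde x,\tilde t)=r\,u(x_0+r\tilde x,t_0+r^2\tilde t)$ on $Q_1$ with $\|v\|_{L^\infty(Q_1)}\le A$, then cites a quantitative, pressure-free Serrin interior regularity~\cite{Serrin62} (equivalently, iterated Stokes interior estimates as in~\cite[Lemma A.2]{ChenStrainTsaiYau08}), and your vorticity/div--curl sketch unpacks exactly this; one small correction: parabolic Calder\'on--Zygmund applied to the $\nabla^2(L^\infty)$ forcing already gives $\chi\omega\in L^q$ for every $q<\infty$, not just $L^q_tW^{-1,q}_x$, hence $\nabla u\in L^q_{\rm loc}$ by div--curl, so the energy-estimate detour in your last paragraph is unnecessary.
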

\begin{proof}[Proof of Lemma~\ref{lem:int:reg}] This result is well-known. 
Let $z_0 = (x_0,t_0)$. The function  $v(\tilde x, \tilde t ):= r\, u(x_0+r\, \tilde x, t_0+r^2 \,\tilde t )$ solves~\eqref{eq:NSE} on $Q_1$ with $\|v\|_{L^\infty(Q_1)}\leq A$, and satisfies $\nabla_{\tilde x}^k v = r^{k+1}\nabla^k_x u$. Using a quantitative version of Serrin's interior regularity result~\cite{Serrin62}, or equivalently, by iterating interior estimates for the non-stationary Stokes system~\cite[Lemma A.2]{ChenStrainTsaiYau08}, we obtain $\|\nabla_{\tilde x}^k v\|_{L^\infty(Q_{1-\eps})} \leq C(k,\eps,A)$. As remarked in footnote~\ref{foot:int:reg}, this bound is polynomial in $A$, and no a priori estimate on the pressure needs to be made. The claimed bound for the derivatives of $u$ now follows.
\end{proof}

\begin{lemma}[Local Type~I bounds for higher derivatives]
\label{lem:local:grad:bounds}
Let $(u,p)$ be a smooth solution to \eqref{eq:NSE} on $B_1\times[-1,0)$ satisfying the Type~I bound~\eqref{eq:local:typeI} in $Q_1^{\mathsf{o}}$, for some constant $C_u  >0$. Then, for every $\eps\in(0,\tfrac12]$ and every integer $k\geq0$,
\begin{equation}
\label{eq:grad:local:type1}
|\nabla^ku(x,t)|\leq \frac{C_{k,\eps}}{(-t)^{\frac{k+1}{2}}+|x|^{k+1}},
\qquad (x,t)\in Q_{1-\eps}^{\mathsf{o}},
\end{equation}
where $C_{k,\eps}$ depends only on $k$, $\eps$ and $C_u $.
\end{lemma}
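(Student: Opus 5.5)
The plan is to apply Lemma~\ref{lem:int:reg} on a parabolic cylinder adapted to the point, with radius comparable to $\sqrt{-t}+|x|$ and truncated so that it stays inside $Q_1$. Fix $\eps\in(0,\tfrac12]$ and $(x,t)\in Q_{1-\eps}^{\mathsf{o}}$. Set $\rho:=\sqrt{-t}+|x|$ and
\[
r := c\,\eps\,\rho, \qquad c:=\tfrac14 .
\]
Consider the cylinder $Q_r(z_0)$ with top point $z_0=(x,t)$. Two things must be checked. First, it lies inside $B_1\times[-1,0)$. Second, on it we have $\sqrt{-t'}+|x'|\geq \tfrac12\rho$.

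For the containment, use $\rho\le \sqrt{-t}+|x|<2$. The spatial containment then needs $|x|+r<1$. Since $|x|<1-\eps$, I would split into two cases. If $\rho\le 2(1-\eps)$ or similar, then $r\le \eps/2$ and the spatial containment is immediate. If $\rho$ is larger, it must be because $\sqrt{-t}$ is large; then I instead shrink to $r=c\eps\min\{\rho,1\}$, which is still comparable to $\rho$ up to constants depending on $\eps$, since $\rho<2$. The temporal containment $t-r^2\ge -1$ works the same way: $-t<(1-\eps)^2$ and $r^2\le \eps^2/16$ give $-t+r^2\le 1$. So throughout I take $r=\tfrac{\eps}{8}\rho$, which satisfies all the constraints because $\rho<2$.

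For the lower bound, take $(x',t')\in Q_r(z_0)$. Then $-t'\ge -t$ and $|x'|\ge |x|-r$, so
\[
\sqrt{-t'}+|x'|\ge \rho-r\ge \tfrac12\rho .
\]
Combined with \eqref{eq:local:typeI}, this gives
\[
|u|\le \frac{2C_u}{\rho}=\frac{2C_u\,\eps}{8r}
\]
on $Q_r(z_0)$, that is, $|u|\le A\,r^{-1}$ with $A=C_u\eps/4$.

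Lemma~\ref{lem:int:reg} (with the internal parameter $\tfrac12$) then bounds $|\nabla^k u|$ at the top point $z_0$, which lies in $Q_{r/2}(z_0)$, by
\[
C(k,A)\,r^{-k-1}= C(k,\eps,C_u)\,\rho^{-k-1}.
\]
Finally, $\rho^{k+1}\ge 2^{-k}\bigl((-t)^{\frac{k+1}{2}}+|x|^{k+1}\bigr)$ by convexity, which yields \eqref{eq:grad:local:type1}.

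The only step needing care is the geometric bookkeeping ensuring the cylinder stays inside $B_1\times[-1,0)$ while keeping $r\gtrsim_\eps\rho$. There is no real analytic obstacle, since the interior estimate requires no pressure bounds.
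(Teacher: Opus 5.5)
Your proposal is correct and follows essentially the paper's argument: a parabolic cylinder topped at $(x,t)$ with radius a fixed $\eps$-multiple of $\sqrt{-t}+|x|$, containment in $Q_1$, the lower bound $\sqrt{-t'}+|x'|\gtrsim \sqrt{-t}+|x|$ on that cylinder, then Lemma~\ref{lem:int:reg} evaluated at the top center. Your one-line estimate $\rho-r\geq\tfrac12\rho$ is slightly cleaner than the paper's case split. Two small fixes: the final inequality $\rho^{k+1}\ge 2^{-k}\bigl((-t)^{\frac{k+1}{2}}+|x|^{k+1}\bigr)$ is true but does not follow ``by convexity,'' since convexity gives the reverse bound $\rho^{k+1}\le 2^{k}\bigl((-t)^{\frac{k+1}{2}}+|x|^{k+1}\bigr)$; it follows instead from superadditivity, $(a+b)^{k+1}\ge a^{k+1}+b^{k+1}$. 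Also, the exploratory case split before you settle on $r=\tfrac{\eps}{8}\rho$ is unnecessary and can be deleted.
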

\begin{proof}[Proof of Lemma~\ref{lem:local:grad:bounds}]
Fix $\eps\in(0,\tfrac12]$ and $(x,t)\in Q_{1-\eps}^{\mathsf{o}}$. Let
$c:= \min\{\tfrac{\eps}{4},\tfrac16\}$ and define $\rho:= c \, (|x|+\sqrt{-t})$. Then $Q_{2\rho}(x,t)\subset Q_1$. Indeed, since $|x|+\sqrt{-t}\leq 2$ we have $2\rho\leq 4c\leq\eps$, so $B_{2\rho}(x)\subset B_1$, while $(2\rho)^2\leq\eps^2\leq 1-(1-\eps)^2\leq 1-(-t)$.  Moreover, for $(\tilde x,\tilde t)\in Q_{2\rho}(x,t)$ we have $(-\tilde t)^{1/2} +|\tilde x|\geq\rho$. Indeed, if $\sqrt{-t}\geq|x|$ then $(-\tilde t)^{1/2} \geq\sqrt{-t}\geq\tfrac{\rho}{2c}\geq\rho$, and if $\sqrt{-t}<|x|$ then $|\tilde x|\geq |x|-2\rho\geq\rho(\tfrac1{2c}-2)\geq\rho$. We deduce from~\eqref{eq:local:typeI} that $|u(\tilde x,\tilde t)|\leq C_u /\rho$ on $Q_{2\rho}(x,t)$. 
Applying  Lemma~\ref{lem:int:reg} with $R=2\rho$, $A=2C_u $, and $\eps=\tfrac12$, gives $|\nabla^ku|\leq C_k\rho^{-k-1}$ on $Q_\rho(x,t)$, for $C_k$ depending only on $k$ and $C_u $. Evaluating at the top center $(x,t)$ yields~\eqref{eq:grad:local:type1}.
\end{proof}

\begin{corollary}[Profile bounds]
\label{cor:local:profile:bounds}
Assume the same hypotheses as in Lemma~\ref{lem:local:grad:bounds}. Let $U$ be as in~\eqref{eq:local:ss:variables}, and let $\Omega = \nabla\times U$. Then, we have
\begin{equation}
\label{eq:local:profile:bounds}
|\nabla^kU(y,s)|\leq \frac{C_{U,k}}{1+|y|^{k+1}},\qquad k\in\{0,1,2\},
\qquad |y|\leq \tfrac{3}{4}e^{\frac{s}{2}},\quad s\geq 1,
\end{equation}
for some constants $C_{U,k}$ which depend only on $C_u =C_{U,0}$. Letting $K:= \max\{C_{U,0}, C_{U,1}, C_{U,2},1\}$, we also have
\begin{equation}
\label{eq:local:vort:pointwise}
|\Omega(y,s)|\leq \frac{2K}{1+|y|^{2}},
\qquad
|\nabla\Omega(y,s)|\leq \frac{6K}{1+|y|^{3}},
\qquad |y|\leq \tfrac{3}{4}e^{\frac{s}{2}},\quad s\geq 1.
\end{equation}
\end{corollary}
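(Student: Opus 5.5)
The plan is to translate the pointwise bounds of Lemma~\ref{lem:local:grad:bounds}, with $\eps=\tfrac14$, into similarity variables through the change of variables~\eqref{eq:local:ss:variables}. Fix $s\geq 1$ and $|y|\leq \tfrac34 e^{s/2}$. Set $t=-e^{-s}$ and $x=\sqrt{-t}\,y=e^{-s/2}y$. Then $|x|\leq \tfrac34$ and $-t=e^{-s}\leq e^{-1}<(\tfrac34)^2$, so $(x,t)\in Q^{\mathsf{o}}_{3/4}$; here $s\geq1$ ensures the time constraint. The definition $U(y,s)=\sqrt{-t}\,u(\sqrt{-t}\,y,t)$ gives $\nabla_y^k U(y,s)=(-t)^{\frac{k+1}{2}}(\nabla_x^k u)(x,t)$.

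Inserting~\eqref{eq:grad:local:type1} then gives
\[
|\nabla^k_y U(y,s)|\leq \frac{C_{k,1/4}\,(-t)^{\frac{k+1}{2}}}{(-t)^{\frac{k+1}{2}}+|x|^{k+1}} = \frac{C_{k,1/4}}{1+|y|^{k+1}},
\]
because $|x|^{k+1}=(-t)^{\frac{k+1}{2}}|y|^{k+1}$. For $k=0$ this could also be read off directly from~\eqref{eq:local:typeI}, using $1+|y|\geq$ the appropriate quantity. Setting $C_{U,k}:=C_{k,1/4}$ for $k\in\{0,1,2\}$ makes the constants depend only on $C_u$. For $k=0$ one may keep $C_{U,0}=C_u$ by noting that $\tfrac{1}{1+|y|}$ is the direct bound; the stated form with $1+|y|^{1}$ is the same thing.

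The vorticity bounds then follow pointwise. Since each component of $\Omega=\nabla\times U$ is a difference of two entries of $\nabla U$, we have $|\Omega|\leq \sqrt2|\nabla U|\leq 2K/(1+|y|^2)$. Similarly, $\nabla\Omega$ is a linear combination of entries of $\nabla^2U$ with $|\nabla\Omega|\leq 2|\nabla^2 U|$, which gives a bound by $6K/(1+|y|^3)$ with room to spare.

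No step is a real obstacle. The only point needing care is checking that $(x,t)$ lies in the region where Lemma~\ref{lem:local:grad:bounds} applies, namely $s\geq1$ and the factor $\tfrac34$, together with bookkeeping of the crude numerical constants.
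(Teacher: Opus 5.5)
Your proposal is correct and matches the paper's proof: evaluate Lemma~\ref{lem:local:grad:bounds} with $\eps=\tfrac14$ at $(x,t)=(e^{-s/2}y,-e^{-s})$, use $\nabla_y^kU=(-t)^{\frac{k+1}{2}}\nabla_x^ku$ and $|x|^{k+1}=(-t)^{\frac{k+1}{2}}|y|^{k+1}$, and then bound $\Omega$ and $\nabla\Omega$ by entries of $\nabla U$ and $\nabla^2U$. Your remark that the $k=0$ bound follows directly from~\eqref{eq:local:typeI} is a small improvement over the paper, whose proof only yields the constant $C_{0,1/4}$; your remark gives exactly $C_{U,0}=C_u$, as the statement asserts.
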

\begin{proof}[Proof of Corollary~\ref{cor:local:profile:bounds}]
For $|y|\leq\tfrac34 e^{s/2}$ and $s\geq 1$, the point $(x,t) = (e^{-s/2}y, -e^{-s})$ has $|x|\leq\tfrac34$ and $-t\leq \frac{1}{e} \leq (\frac 34)^2$, so Lemma~\ref{lem:local:grad:bounds} with $\eps=\tfrac14$ gives
\[
|\nabla^k U(y,s)| = (-t)^{\frac{k+1}{2}}|\nabla^k u(x,t)|
\leq \frac{C_{k,1/4}\,(-t)^{\frac{k+1}{2}}}{(-t)^{\frac{k+1}{2}}+|x|^{k+1}}
= \frac{C_{k,1/4}}{1+|y|^{k+1}}.
\]
Then~\eqref{eq:local:vort:pointwise} follows from $|\Omega|^2 = |\nabla U|^2 - \tr((\nabla U)^2)\leq2|\nabla U|^2$ and from $\nabla\Omega$ being a combination of entries of $\nabla^2U$.
\end{proof}

\subsection{Small vorticity remains small and implies regularity}
The heart of the section is the following propagation lemma, which uses the local Type~I bound (via~\eqref{eq:local:vort:pointwise}) and the Biot--Savart law to propagate a locally small vorticity forward in time.

\begin{lemma}[Propagation of small vorticity]
\label{lem:local:propagation}
Under the hypotheses of Lemma~\ref{lem:local:grad:bounds} and Corollary~\ref{cor:local:profile:bounds}, there exists a universal constant $\vartheta \in (0,1]$ such that the following holds. For any $\theta_*\in(0,\vartheta]$, there exist $R_* = R_*(\theta_*,C_u )\geq2$, where $C_u $ is the constant from~\eqref{eq:local:typeI}, such that for any $R\geq R_*$ there exists $s_* = s_*(R) \geq1$  such that  
\begin{equation}
\label{eq:prop:hyp}
\|\Omega(\cdot,\bar s)\|_{L^2(B_{2R})}\leq\tfrac{1}{2} \theta_*
\quad\text{for some } \bar s\geq s_*
\quad\Longrightarrow\quad
\|\Omega(\cdot,s)\|_{L^2(B_{R})}\leq\theta_*
\quad\text{for all } s\geq\bar s.
\end{equation}
If~\eqref{eq:prop:hyp} holds, then also $\|\nabla U(\cdot,s)\|_{L^2(B_{R/2})} \leq 2 \theta_*$ for all $s\geq \bar s$.
\end{lemma}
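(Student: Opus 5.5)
We will prove the propagation statement by a continuity (bootstrap) argument in $s$, built on a \emph{localized} enstrophy estimate for the vorticity equation
\[
\p_s \Omega + \Omega + \tfrac12 y\cdot\nabla\Omega - \Delta\Omega + U\cdot\nabla\Omega = \Omega\cdot\nabla U ,
\]
which is \eqref{eq:vort} with $\alpha=0$ plus $\p_s\Omega$. This equation holds on $|y|<e^{s/2}$. Fix a smooth cutoff $\chi_R$ with $\chi_R=1$ on $B_R$, $\supp\chi_R\subset B_{2R}$, and $|\nabla\chi_R|\les R^{-1}$, $|\nabla^2\chi_R|\les R^{-2}$. We choose $s_*(R)$ so that $2R\leq \tfrac34 e^{s_*/2}$; then Corollary~\ref{cor:local:profile:bounds} applies on $\supp\chi_R$ for all $s\geq s_*$.

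\textbf{Localized energy identity.} Testing the vorticity equation with $\chi_R^2\Omega$ and integrating by parts gives
\[
\tfrac12\tfrac{d}{ds}\|\chi_R\Omega\|_{L^2}^2+\tfrac14\|\chi_R\Omega\|_{L^2}^2+\|\nabla(\chi_R\Omega)\|_{L^2}^2
=\int\chi_R^2\,\Omega\cdot\nabla U\cdot\Omega+\mathcal{E}_R .
\]
The commutator terms $\mathcal{E}_R$ involve $|\Omega|^2$ times $|\nabla\chi_R|^2$, $|\Delta\chi_R|$, $|y||\nabla\chi_R|$ and $|U||\nabla\chi_R|$, all supported in the annulus $R\le|y|\le2R$. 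The drift term contributes $\tfrac12|y||\nabla\chi_R|\les 1$ there, so it is not small by itself. However, on the annulus \eqref{eq:local:vort:pointwise} gives $|\Omega|\les K R^{-2}$, so that $\int_{R\le|y|\le 2R}|\Omega|^2\les K^2R^{-1}$. Hence $|\mathcal{E}_R|\les K^2 R^{-1}$, which is small once $R_*$ is large depending on $\theta_*$ and $C_u$.

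\textbf{Vortex stretching.} We split $U$ using the Biot--Savart law. Since $\nabla U$ is not globally available in self-similar variables, we write $\nabla U=\nabla\nabla\times(-\Delta)^{-1}(\chi_R\Omega)+H$, where $H$ is the contribution of $(1-\chi_R)\Omega$ in $B_{2R}$ plus a harmonic remainder. For the first piece we repeat the argument of Proposition~\ref{prop:local:vort} (Calder\'on--Zygmund in $L^4$, Gagliardo--Nirenberg, and $\|\chi_R\Omega\|_{L^2}\le\theta$), which yields the bound $C\theta\bigl(\|\chi_R\Omega\|_{L^2}^2+\|\nabla(\chi_R\Omega)\|_{L^2}^2\bigr)$ with a universal $C$. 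The term with $H$ is the main obstacle, and it is where I expect most of the work. The pointwise bounds \eqref{eq:local:profile:bounds} give $|\nabla U|\le K/(1+|y|^2)$, which is small only for $|y|\gtrsim\sqrt K$. I would handle it by absorbing $\int_{|y|\ge\bar R}\chi_R^2|\nabla U||\Omega|^2\le\tfrac18\|\chi_R\Omega\|^2$, as in Proposition~\ref{prop:local:vort} with $\bar R=\sqrt{8K}\le R$. On $B_{\bar R}$ I would use Calder\'on--Zygmund for $\chi_R\Omega$ together with a harmonic-function estimate: $H$ restricted to $B_{R/2}$ is controlled through its mean-value representation by the far-field $L^2$ mass $\les K R^{-1/2}$ together with the Type~I $L^\infty$ bound on $U$. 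If this splitting is insufficient, the fallback is to bound $\int_{B_{\bar R}}|\nabla U||\Omega|^2\le \|\nabla U\|_{L^4(B_{\bar R})}\|\Omega\|_{L^4(B_{\bar R})}\|\chi_R\Omega\|_{L^2}$, and to control $\|\nabla U\|_{L^4(B_{\bar R})}$ via local elliptic estimates for $\operatorname{div}$/$\operatorname{curl}$, using $\|\Omega\|_{L^4(B_{2\bar R})}$ and $\|U\|_{L^2(B_{2\bar R})}$.

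\textbf{Bootstrap.} With $\theta_*\le\vartheta:=(4C)^{-1}$, we obtain on any interval where $\|\chi_R\Omega\|_{L^2}\le\theta_*$ the differential inequality
\[
\tfrac{d}{ds}\|\chi_R\Omega\|^2_{L^2}+\tfrac18\|\chi_R\Omega\|_{L^2}^2\le C K^2R^{-1}.
\]
Starting from $\|\chi_R\Omega(\bar s)\|\le\tfrac12\theta_*$ and choosing $R_*$ so large that $8CK^2R_*^{-1}\le\tfrac14\theta_*^2$, Gr\"onwall gives $\|\chi_R\Omega(s)\|^2\le\tfrac14\theta_*^2+\tfrac14\theta_*^2<\theta_*^2$. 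This closes the continuity argument and gives the bound on $B_R$ for all $s\ge\bar s$. Finally, the $\nabla U$ bound follows from the same Biot--Savart splitting on $B_{R/2}$: the local part has $L^2$ norm at most $\|\chi_R\Omega\|_{L^2}\le\theta_*$ by Plancherel, and the harmonic part is $\les KR^{-1/2}\le\theta_*$ for $R\geq R_*$.
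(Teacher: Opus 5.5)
Your argument is correct, and its skeleton is the paper's: a cut-off enstrophy identity, annulus errors of size $O(K^2R^{-1})$ from~\eqref{eq:local:vort:pointwise}, an $L^4$ Calder\'on--Zygmund/Gagliardo--Nirenberg bound for the vortex stretching, and a bootstrap closed by the $\tfrac14\mathcal{F}^2$ damping. The genuine difference is how you localize $\nabla U$.

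\textbf{The paper's route.} It never decomposes $U$. It bounds $\|\nabla U\|_{L^4(B_{2R})}\le\|\nabla(\chi^\sharp U)\|_{L^4}$, with a second cutoff $\chi^\sharp=\bar\chi(\cdot/2R)$, and applies the global div--curl estimate. The only non-local contributions are then the commutators $\nabla\chi^\sharp\cdot U$ and $\nabla\chi^\sharp\times U$, together with $\Omega$ on $B_{4R}\setminus B_R$. All of these are $O(KR^{-5/4})$ in $L^4$, directly from the Type~I bounds. The same device with $\chi^\flat$ in $L^2$ gives the final $\nabla U$ bound. This is shorter and needs no potential theory. The price is that it uses the profile bounds on $B_{4R}$, which is why $s_*\ge 2\log(8R)$.

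\textbf{Your route.} Your Biot--Savart splitting uses only data on $B_{2R}$, but $H$ needs to be made precise.
\begin{itemize}
\item Set $V=\nabla\times(-\Delta)^{-1}(\chi_R\Omega)$ and $W=U-V$. On $B_R$, $W$ is divergence free and $\nabla\times W=-\nabla(-\Delta)^{-1}(\nabla\chi_R\cdot\Omega)$. This is the gradient of a function harmonic in $B_R$, so $W$ is harmonic there.
\item From $|\chi_R\Omega|\le 2K/(1+|y|^2)$ you get $|V(y)|\lesssim K/(1+|y|)$, hence $|W|\lesssim K/(1+|y|)$ on $B_R$.
\item Interior estimates then give $|\nabla W|\lesssim KR^{-1}$ on $B_{R/2}$, which is absorbable in the stretching term. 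On $B_{2R}\setminus B_{R/2}$ you can simply use $|\nabla U|\lesssim KR^{-2}$, so the $\bar R=\sqrt{8K}$ split is not needed.
\item Caccioppoli gives $\|\nabla W\|_{L^2(B_{R/2})}\lesssim R^{-1}\|W\|_{L^2(B_R)}\lesssim KR^{-1/2}$.
\end{itemize}
The pointwise decay of $V$ is what makes the last bound work. If you bounded $\|V\|_{L^2(B_R)}$ only through $\|V\|_{L^6}\lesssim\|\chi_R\Omega\|_{L^2}$, you would pick up an extra $C\theta_*$ with an uncontrolled universal $C$, which would spoil the constant $2\theta_*$.

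\textbf{Your fallback would not close.} A local div--curl estimate on a ball of fixed radius $2\bar R\sim\sqrt K$ carries the lower-order term $\bar R^{-7/4}\|U\|_{L^2(B_{2\bar R})}\sim K\bar R^{-5/4}\sim K^{3/8}$, which is not small. After Young's inequality it leaves a $C(K)\mathcal{F}^2$ term that the $\tfrac14\mathcal{F}^2$ damping cannot absorb with a universal $\vartheta$. The smallness of the non-local part of $\nabla U$ has to come from the large scale $R$, either through harmonicity on $B_R$ as in your main route or through the paper's cutoff at scale $2R$.
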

\begin{proof}[Proof of Lemma~\ref{lem:local:propagation}]
Let $R\geq2$ be a parameter to be fixed below.
Let $\chi(y):=\bar\chi(y/R)$ with $\bar\chi$ a smooth radially non-increasing cutoff function such that $\bar\chi\equiv1$ on $B_1$ and $\bar\chi\equiv0$ on $B_2^\complement$. We consider times
\begin{equation}
\label{eq:prop:geometry}
s\geq\max\{1,\ 2\log(8 R)\},
\end{equation}
so that $B_{4R}\subseteq B_{\frac12 e^{\frac s2}}$ and $\supp\chi\subseteq B_{2R}$.\footnote{Note that $(U,\Omega)$ are well-defined in the cone $\{(y,s) \in \RR^3 \times [0,\infty) \colon |y| < e^{\frac{s}{2}}\}$; the bounds in Corollary~\ref{cor:local:profile:bounds} hold in the smaller set $\{(y,s) \in \RR^3 \times [1,\infty) \colon |y| \leq \frac 34 e^{\frac{s}{2}}\}$. }  

Taking the curl of~\eqref{eq:local:profile}, akin to~\eqref{eq:vort} (with the $\alpha$-term being replaced by the time derivative term) we obtain
\begin{equation}
\label{eq:local:vort:eq}
\partial_s\Omega - \Delta\Omega + \Omega + \tfrac12(y\cdot\nabla)\Omega + (U\cdot\nabla)\Omega = (\Omega\cdot\nabla)U.
\end{equation}
Upon defining the local enstrophy to be 
\[
\mathcal{F}^2 (s):=\int\chi^2|\Omega (\cdot,s)|^2\, dy, 
\]
by testing~\eqref{eq:local:vort:eq} against $\chi^2\Omega$ we obtain
\begin{equation}
\label{eq:local:enstrophy:identity}
\tfrac12 \tfrac{d}{ds} \mathcal{F}^2 + \tfrac14 \mathcal{F}^2 + \int\chi^2|\nabla\Omega|^2=  \int ((\chi\Omega)\cdot\nabla U) \cdot (\chi\Omega) + \mathcal{E},
\end{equation}
where the error term due to derivatives on the cutoff function is 
\begin{equation*}
\mathcal{E}:= \int\bigl(\tfrac12\Delta\chi^2 + \tfrac14 y\cdot\nabla\chi^2 + \tfrac12 U\cdot\nabla\chi^2\bigr)|\Omega|^2.
\end{equation*}
The error $\mathcal E$ is supported in the annulus $\{R\leq|y|\leq2R\}$, where due to~\eqref{eq:local:vort:pointwise} we have $|\Omega|^2\leq4K^2R^{-4}$ and $|U|\leq KR^{-1}$. Since $|\Delta\chi^2|\les R^{-2}$, $|y||\nabla\chi^2|\les 1$, and the annulus has volume $\les R^3$, we deduce that there exists a universal constant $C_{\mathcal{E}}  >0$ such that 
\begin{equation}
\label{eq:prop:influx}
|\mathcal{E}|\leq C_{\mathcal{E}} K^3 R^{-1}.
\end{equation}
This bounds the second term on the right side of~\eqref{eq:local:enstrophy:identity}.  In order to bound the first term on the right side of~\eqref{eq:local:enstrophy:identity}, we note that
\[
\Bigl| 
 \int ((\chi\Omega)\cdot\nabla U) \cdot (\chi\Omega)
\Bigr|
\leq \|\chi \Omega\|_{L^2} \|\nabla U\|_{L^4(B_{2R})} \|\chi \Omega\|_{L^4}
\leq \|\chi \Omega\|_{L^2} \|\nabla (\chi^\sharp U)\|_{L^4} \|\chi \Omega\|_{L^4}
,
\]  
where $\chi^\sharp(y) = \bar \chi(y/(2R))$. Since $B_{4R} \subset B_{\frac12 e^{\frac s2}}$ and $s\geq 1$, on this ball we may appeal to the bounds in~\eqref{eq:local:profile:bounds} and~\eqref{eq:local:vort:pointwise}, to deduce that there exists a universal constant $C^\prime>0$ such that\footnote{The simple bound is $\|\chi \Omega\|_{L^4} \les \|\chi \Omega\|_{L^2}^{1/4} \|\nabla (\chi \Omega)\|_{L^2}^{3/4}$ (from Gagliardo-Nirenberg) together with $\|\nabla (\chi \Omega)\|_{L^2} \les \|\chi \nabla \Omega\|_{L^2} + K R^{-3/2} $ (from~\eqref{eq:local:profile:bounds}--\eqref{eq:local:vort:pointwise} and the bound $\| (1 + |y|^2)^{-1}\|_{L^2(B_{2R}\setminus B_R)} \les R^{-1/2}$). Similarly, the standard div-curl estimate gives $\|\nabla (\chi^\sharp U)\|_{L^4} \les \|\nabla \cdot (\chi^\sharp U)\|_{L^4}  + \|\nabla \times (\chi^\sharp U)\|_{L^4} \les \|\nabla  \chi^\sharp \cdot U\|_{L^4} + \| \nabla \chi^\sharp \times U \|_{L^4}  + \| \chi^\sharp \Omega \|_{L^4} $; the first two terms are bounded using \eqref{eq:local:profile:bounds}--\eqref{eq:local:vort:pointwise}, while the last term is bounded just as $\|\chi \Omega\|_{L^4}$.}
\begin{equation}
\|\nabla (\chi^\sharp U)\|_{L^4} 
+ \|\chi \Omega\|_{L^4}
\leq 
C^\prime \|\chi \Omega\|_{L^2}^{\frac 14} \|\chi \nabla \Omega\|_{L^2}^{\frac 34} 
+ C^\prime \|\chi \Omega\|_{L^2} R^{-\frac 34} 
+ C^\prime K R^{-\frac 54}.
\label{eq:blahblahblah}
\end{equation}
Putting together the previous two bounds, using that $R\geq 2$, and appealing to $\epsilon$-Young, we obtain 
\begin{equation}
\label{eq:prop:influx:2}
\Bigl| 
 \int ((\chi\Omega)\cdot\nabla U) \cdot (\chi\Omega)
\Bigr| 
\leq 3 (C^\prime)^2 \mathcal{F} \bigl( \mathcal{F}^{2} + \|\chi \nabla \Omega\|_{L^2}^2
+  K^2 R^{-\frac 52} \bigr).
\end{equation}
With~\eqref{eq:local:enstrophy:identity}, \eqref{eq:prop:influx}, and \eqref{eq:prop:influx:2}, we arrive at
\begin{equation}
\label{eq:local:enstrophy:identity:new}
\tfrac12 \tfrac{d}{ds} \mathcal{F}^2 + \tfrac14 \mathcal{F}^2 + \|\chi \nabla \Omega\|_{L^2}^2
\leq 3 (C^\prime)^2 \mathcal{F} \bigl( \mathcal{F}^{2} + \|\chi \nabla \Omega\|_{L^2}^2
+  K^2 R^{-\frac 52} \bigr) + C_{\mathcal{E}} K^3 R^{-1},
\end{equation}
where $C^\prime$ and $C_{\mathcal{E}}$ are universal constants. 

Let $\vartheta:= \frac{1}{24} (C^\prime)^{-2}$. Let $\theta_* \leq \vartheta$ be arbitrary,  
and define $R_*=R_*(\theta_*,K)$ by
\[
R_* := \min \bigl\{R\geq 2 \colon K^2 R^{-\frac 52} + 8 C_{\mathcal{E}} K^3 R^{-1} \leq \tfrac 14 \theta_*^2 \bigr\}.
\]
Let $R\geq R_*$ be arbitrary, and according to \eqref{eq:prop:geometry} define $s_* = s_*(R)$ by
\[
s_*:= \max\{1,\ 2\log(8 R )\}.
\]

With these definitions, assume that there exists $\bar s \geq s_*$ such that $\|\Omega(\cdot,\bar s)\|_{L^2(B_{2R})} \leq \frac 12 \theta_*$. Then, by~\eqref{eq:local:enstrophy:identity:new} and continuity in time, there exists $\bar s^\prime > \bar s$ such that $\|\chi \Omega(\cdot,s)\|_{L^2} \leq  \theta_*$ for all $s\in[\bar s, \bar s^\prime]$. On this time interval, since $\theta_* \leq \vartheta$, and using the definition of $R_*$, we obtain from~\eqref{eq:local:enstrophy:identity:new} that
\begin{equation*}
\tfrac12 \tfrac{d}{ds} \mathcal{F}^2 + \tfrac18 \mathcal{F}^2 + \tfrac 78 \|\chi \nabla \Omega\|_{L^2}^2
\leq \tfrac 18 K^2 R^{-\frac 52}  + C_{\mathcal{E}} K^3 R^{-1}
\leq \tfrac 18 K^2 R_*^{-\frac 52}  + C_{\mathcal{E}} K^3 R_*^{-1}
\leq \tfrac{1}{32} \theta_*^2.
\end{equation*}
Integrating in time, for all $s\in[\bar s, \bar s^\prime]$ we have
\[
\mathcal{F}^2(s) \leq \mathcal{F}^2(\bar s)   + \tfrac{1}{4} \theta_*^2 \leq \tfrac{1}{2} \theta_*^2.
\]
By the standard continuity argument $\bar s^\prime$ may be taken to be arbitrarily large. Since $\chi\equiv 1$ on $B_{R}$, this proves~\eqref{eq:prop:hyp}.

To conclude the proof of the lemma, note that with $\chi^\flat(y):= \bar \chi(2 y /R)$, in analogy to~\eqref{eq:blahblahblah} we have $\|\nabla U\|_{L^2(B_{R/2})} \leq \|\nabla (\chi^\flat U)\|_{L^2} \leq \|\Omega\|_{L^2(B_{R})} + C^{\prime\prime} K R^{-1/2}$, where $C^{\prime\prime}>0$ is a universal constant. Upon increasing the value of $R_*$ if necessary, this concludes the proof of the Lemma.
\end{proof}

Combined with the $\eps$-regularity criterion of Caffarelli-Kohn-Nirenberg~\cite{CKN82}, Lemma~\ref{lem:local:propagation} implies the following regularity criterion.

\begin{proposition}[One time-slice regularity criterion for Type~I solutions]
\label{prop:one:time}
Consider a smooth solution $(u,p)$  to \eqref{eq:NSE} on $B_1\times[-1,0)$. Assume that $u$ satisfies the Type~I bound~\eqref{eq:local:typeI}, and assume that  $p \in L^\infty(A \times (-1,0))$, where $A$ is the annulus $A = \{\tfrac12<|x|<\tfrac34\}$.\footnote{Note that the pressure assumption is not quantitative and \emph{not sharp}; it merely ensures that the harmonic part of the pressure lies in $L^{3/2}(Q_1)$, by eliminating Serrin-type parasitic solutions. We could have alternatively assumed that: \emph{$(u,p)$ is a suitable weak solution of \eqref{eq:NSE} in $Q_1$, which satisfies the Type~I bound~\eqref{eq:local:typeI}}.}
There exists $\theta_*\in(0,1]$, $R_{**}\geq2$, and $s_{**}\geq1$, depending only on the constant $C_u $ from~\eqref{eq:local:typeI}, such that if
\begin{equation}
\label{eq:one:time:hyp}
\|\Omega(\cdot,\bar s)\|_{L^2(B_{2R_{**}})}\leq\tfrac{1}{2}\theta_*
\quad\text{for some }\bar s\geq s_{**}
\quad\Longrightarrow\quad
(0,0) \quad\text{is a regular point}.\footnote{Meaning, there exists $r>0$ such that  $u\in L^\infty(Q_r^{\mathsf o})$.}
\end{equation}
Equivalently, in the original $(x,t)$ variables, if there exists a single time $\bar t\in(-e^{-s_{**}},0)$,
such that 
\[
(-\bar t\, )^{1/2}\int_{B_{2R_{**}(-\bar t\, )^{1/2}}}|\omega(x,\bar t\,)|^2\,dx\leq \tfrac 14\theta_*^2 ,
\] 
then $(0,0)$ is a regular point.
\end{proposition}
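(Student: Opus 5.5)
Fix $\theta_*:=\vartheta$ from Lemma~\ref{lem:local:propagation}. Then choose $R_{**}\geq R_*(\theta_*,C_u)$ large, and set $s_{**}:=s_*(R_{**})$; how large $R_{**}$ must be is decided in the last step. Under the hypothesis~\eqref{eq:one:time:hyp}, Lemma~\ref{lem:local:propagation} gives two bounds for every $s\ge\bar s$:
\[
\|\Omega(\cdot,s)\|_{L^2(B_{R_{**}})}\le\theta_*,\qquad \|\nabla U(\cdot,s)\|_{L^2(B_{R_{**}/2})}\le 2\theta_*.
\]
The plan is to turn these into smallness of a scale-invariant quantity along a sequence of cylinders $Q_r$ shrinking to $(0,0)$. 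We then feed this into a CKN-type $\eps$-regularity criterion. The convenient one is the version that requires $\limsup_{r\to0} r^{-1}\int_{Q_r}|\nabla u|^2\,dx\,dt$ to be small, for a suitable weak solution~\cite{CKN82}.

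First, unscale. Using $\nabla_x u(x,t)=(-t)^{-1}\nabla U(y,s)$ and $dx=(-t)^{3/2}dy$, we get
\[
\int_{|x|\le \frac{R_{**}}{2}\sqrt{-t}}|\nabla u(x,t)|^2dx=(-t)^{-1/2}\|\nabla U(\cdot,s)\|_{L^2(B_{R_{**}/2})}^2\le 4\theta_*^2(-t)^{-1/2}
\]
for all $t\in[\bar t,0)$. Now take $r\le \sqrt{-\bar t}$ and split $Q_r$ into two pieces.

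On the inner region $|x|\le\frac{R_{**}}{2}\sqrt{-t}$, the display above integrates to at most $4\theta_*^2\int_{-r^2}^0(-t)^{-1/2}dt=8\theta_*^2 r$.

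On the outer region $|x|>\frac{R_{**}}{2}\sqrt{-t}$, we use the Type~I gradient bound of Lemma~\ref{lem:local:grad:bounds}, namely $|\nabla u|\lesssim_{C_u}|x|^{-2}$. This gives $\int_{|x|>\frac{R_{**}}2\sqrt{-t}}|\nabla u|^2dx\lesssim_{C_u} (R_{**}\sqrt{-t})^{-1}$, which integrates in time to $\lesssim_{C_u} R_{**}^{-1} r$.

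Hence $r^{-1}\int_{Q_r}|\nabla u|^2\le 8\theta_*^2+C(C_u)R_{**}^{-1}$ for all small $r$. Here is where we need the freedom to shrink $\theta_*$ below $\vartheta$: we shrink $\theta_*$ further if needed, which is allowed since the lemma holds for any $\theta_*\le\vartheta$, and we enlarge $R_{**}$. Together these make the right side smaller than the CKN threshold $\eps_{\rm CKN}$. Everything depends only on $C_u$. The equivalent $(x,t)$ formulation is just the change of variables $\|\Omega\|_{L^2(B_{2R})}^2=(-t)^{1/2}\int_{B_{2R\sqrt{-t}}}|\omega|^2dx$, as computed after Proposition~\ref{prop:local:vort}.

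The main obstacle is justifying the application of the CKN criterion. That criterion is stated for suitable weak solutions with $u\in L^3$ and $p\in L^{3/2}$ in a neighborhood of $(0,0)$. Smoothness on $B_1\times[-1,0)$ gives the local energy inequality (it is an equality there), and it passes to the closed cylinder since $u\in L^3(Q_1)$ by the Type~I bound. For the pressure, I would decompose $p$ on $B_{3/4}$ as
\[
p=R_iR_j(\phi u^iu^j)+h,
\]
where $\phi$ is a cutoff and $h$ is harmonic in $B_{1/2}$. The first term lies in $L^{3/2}$, by Calder\'on--Zygmund estimates and $u\in L^{3,\infty}$ in space uniformly in $t$. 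For the harmonic part $h$, the mean value property controls it in $B_{1/4}$ through its values on the annulus $A$, where $p$ is bounded by assumption~\eqref{eq:local:pbound}. This places $p\in L^{3/2}(Q_{1/4})$, and the CKN $\limsup$ criterion then yields that $(0,0)$ is a regular point.

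If the gradient version of CKN proves awkward, the fallback is the $\eps$-regularity criterion in terms of $r^{-2}\int_{Q_r}(|u|^3+|p|^{3/2})$. Smallness of $u$ would come from Poincaré/Sobolev applied to $\nabla U$ on $B_{R_{**}/2}$ together with Type~I decay outside. This route is harder, because the spatial mean of $U$ is not obviously small.
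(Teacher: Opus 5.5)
Your proposal is correct and follows the paper's proof essentially step for step. You use Lemma~\ref{lem:local:propagation} to get $\|\nabla U(\cdot,s)\|_{L^2(B_{R_{**}/2})}\le 2\theta_*$ for $s\ge\bar s$, split each time slice of $Q_r$ (with $r\le e^{-\bar s/2}$) at $|x|=\frac{R_{**}}{2}\sqrt{-t}$ using the Type~I gradient bound outside, obtain $r^{-1}\int_{Q_r}|\nabla u|^2\le 8\theta_*^2+C(C_u)R_{**}^{-1}$, and conclude with the gradient form of the CKN criterion, fixing $\theta_*$, then $R_{**}$, then $s_{**}$, with the same local/harmonic splitting of the pressure.

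Two small repairs are needed in the pressure step. First, take the cutoff $\phi\equiv 1$ on $B_{3/4}$, so that $h$ is harmonic on a region containing $A$; harmonicity only in $B_{1/2}$ would not let you use the bound on $A$. Second, justify $R_iR_j(\phi u^iu^j)\in L^{3/2}$ via the space-time bound $\int_{-1}^0\|u(t)\|_{L^3(B_1)}^3\,dt<\infty$ rather than uniform weak-$L^3$, which alone gives only weak-type control.
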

\begin{proof}[Proof of Proposition~\ref{prop:one:time}]
Under the standing assumptions, $(u,p)$ is a suitable weak solution of the 3D Navier-Stokes equations in the parabolic cylinder $Q_{3/4}$, in the sense of~\cite{CKN82}. Since we assumed that $(u,p)$ are smooth on $B_1\times(-1,0)$, only integrability as $t\to 0^-$ needs to be checked. By~\eqref{eq:local:typeI} and Lemma~\ref{lem:local:grad:bounds} we have  $u \in L^\infty_t L^2_x(Q_{3/4})$ and $\nabla u \in L^2_{x,t}(Q_{3/4})$. To prove integrability of the pressure, fix $\varphi\in C_c^\infty(B_{7/8})$ with $\varphi\equiv1$ on $B_{3/4}$ and write $p = p_{\rm loc}+h$ on $B_{3/4}$, where $p_{\rm loc}(\cdot,t)$ is the Newtonian potential of $\partial_i\partial_j(\varphi\, u_iu_j)(\cdot,t)$, and   $\Delta h(\cdot,t)=0$ in $B_{3/4}$. By~\eqref{eq:local:typeI}, Lemma~\ref{lem:local:grad:bounds}, and standard Calderon-Zygmund estimates, we obtain that $p_{\rm loc} \in L^{3/2}_{x,t}(Q_{3/4})\cap L^\infty_{x,t}(A\times (-1,0))$, where $A =\{\tfrac12<|x|<\tfrac34\} $. The assumption $p \in L^\infty_{x,t}(A \times (-1,0))$ then gives $h\in L^\infty_{x,t}(A \times (-1,0))$, with $h(\cdot,t)$ harmonic in $B_{3/4}$. The fact that $h \in L^{3/2}_{x,t}(Q_{3/4})$ follows from the maximum principle. We are thus justified to invoke the $\epsilon$-regularity theorem for suitable weak solutions~\cite{CKN82,Lin98,LadySeregin99,Kukavica09}: there exists a universal constant $\eps_*>0$ such that if $\limsup_{r\to 0^+} \frac{1}{r} \int_{Q_r} |\nabla u|^2 \leq \eps_*$, then $(0,0)$ is a regular point. 

Let $\theta_* \in (0,1]$ be sufficiently small, in terms of universal constants, to be chosen later. Lemma~\ref{lem:local:propagation} fixes an  $R_* = R_*(\theta_*,C_u )\geq 2$. Pick any $R_{**} \geq R_*$, to be chosen later, and fix $s_{**}:=s_*(R_{**}) \geq 2 \log(8 R_{**})$, as in Lemma~\ref{lem:local:propagation}. As in~\eqref{eq:one:time:hyp}, assume that there exists $\bar s\geq s_{**}$ such that $\|\Omega(\cdot,\bar s)\|_{L^2(B_{2 R_{**}})} \leq \frac 12 \theta_*$. Then, Lemma~\ref{lem:local:propagation} guarantees that $\|\nabla U(\cdot,s)\|_{L^2(B_{R_{**}/2})} \leq 2 \theta_*$ for all $s\geq \bar s$.

Fix $r_0:= e^{-\bar s/2} \leq e^{-1} < \frac 34$, and let $r \in (0,r_0]$ and $t \in (-r^2,0)$. Using the self-similar transformation in~\eqref{eq:local:ss:variables}, with $s = -\log(-t) \geq - \log (r^2) \geq - \log(r_0^2) = \bar s$, we have that 
\[
\int_{B_r} |\nabla u(\cdot,t)|^2 dx
= 
(-t)^{-\frac 12} \int_{B_{r/\sqrt{-t}}} |\nabla U(\cdot,s)|^2 dy
=
e^{\frac s2} \int_{|y|\leq r   e^{s/2}}  |\nabla U(\cdot,s)|^2 dy.
\]
Note that $r e^{s/2} \leq r_0 e^{s/2} < \frac 34 e^{s/2}$, and thus we may appeal to the bound~\eqref{eq:local:profile:bounds} to estimate the right side of the above identity. Moreover, by construction we have $s\geq \bar s \geq 2 \log(8 R_{**})$; we may thus split the above integral into $|y|\leq \frac 12 R_{**}$ and $|y|\geq \frac 12 R_{**}$ to bound
\begin{equation}
\label{eq:Boston:sucks:1}
\int_{|y|\leq r   e^{s/2}}  |\nabla U(\cdot,s)|^2 dy
\leq  \| \nabla U(\cdot,s)\|_{L^2(B_{R_{**}/2})}^2 +  8 \pi C_{U,1}^2   R_{**}^{-1}
\leq 5 \theta_*^2,
\end{equation}
by choosing $R_{**} = R_{**}(\theta_*,C_u )= \max\{ R_*, 8 \pi C_{U,1}^2 \theta_*^{-2} \}$. Note that the bound~\eqref{eq:Boston:sucks:1} holds for all $s\geq \bar s$. Equivalently,~\eqref{eq:Boston:sucks:1} holds for all $t\in (-r_0^2,0)$, and thus also for all $t\in (-r^2,0)$. Using the bound~\eqref{eq:Boston:sucks:1} we may then compute
\begin{equation}
\label{eq:scaled:vorticity}
\frac{1}{r} \int_{Q_r}|\nabla u|^2\,dx\,dt
\leq 5 \theta_*^2 \cdot \frac{1}{r} \int_{-r^2}^{0} \frac{1}{\sqrt{-t}} dt
= 10 \, \theta_*^2 .
\end{equation}
The bound~\eqref{eq:scaled:vorticity} holds for all $r\leq r_0$. To conclude, we  pick $\theta_* \in (0,1]$ to be small enough to ensure $10 \, \theta_*^2 \leq \eps_*$, where $\eps_*>0$ is the universal constant from the $\epsilon$-regularity theorem of~\cite{CKN82}.
\end{proof}

\subsection{Proof of Theorem~\ref{thm:local:theorem}}
\label{sec:proof:local}
Fix $\bar s$ as in \eqref{eq:Ds:bound}, and let $\bar R := \frac 58 e^{\bar s/2}$. Since $\bar s\geq s_0$,  and $s_0$ is chosen large with respect to $C_u $ and $C_p$, we should think of $\bar R$ as being large with respect to $C_u $ and $C_p$.

For $\bar s \geq s_0 \geq 1$, $U(\cdot,\bar s)$ satisfies the bound ~\eqref{eq:local:profile:bounds}. From~\eqref{eq:local:pbound} and~\eqref{eq:local:ss:variables} we obtain that the pressure satisfies
$|P(y,s)|\leq C_p e^{-s}$ for all $\tfrac{1}{2} e^{\frac{s}{2}}\leq |y|\leq \tfrac{3}{4}e^{\frac{s}{2}}$, and all $s\geq 0$. Since by definition $\bar R \in (\frac 12 e^{\bar s/2},\frac 34 e^{\bar s/2})$, we deduce that 
\begin{equation}
\label{eq:local:pressure:annulus}
|P(y,\bar s)|\leq C_p e^{-\bar s},\qquad \mbox{for all} \qquad \tfrac 45 \bar R \leq |y|\leq \tfrac 65 \bar R.
\end{equation}
For the Bernoulli function $\Pi = P + \tfrac12|U|^2+\tfrac12 y\cdot U$, we deduce from~\eqref{eq:local:profile:bounds} and~\eqref{eq:local:pressure:annulus} that
\begin{align}
|\Pi(y,\bar s)| \leq C_p e^{-\bar s} + \tfrac{1}{2} C_u ^2 \bar R^{-2} + \tfrac 12 C_u  
\leq 2 C_u 
,\qquad \mbox{for all} \qquad |y|=\bar R,
\label{eq:local:Pi:bounds}
\end{align}
upon ensuring that $C_u  \bar R^{-2} \leq 3 C_u  e^{-\bar s} \leq 1$ and   $C_p e^{-\bar s} \leq C_u $ (both of these conditions are satisfied if $s_0$ is chosen to be sufficiently large with respect to $C_u $ and $C_p$). 

It will also be convenient  to obtain a bound for $P$ (and hence $\Pi$) in the entire ball $B_{\bar R}$; this bound is obtained as follows. Since $(U,P)(\cdot,\bar s)$ solve~\eqref{eq:local:profile} in the set $\{y \colon |y|<e^{\bar s/2} = \frac{8}{5} \bar R\}$, upon taking the divergence of~\eqref{eq:local:profile:a}, we obtain that $-\Delta P(\cdot,\bar s) = \mathrm{div}(U\cdot\nabla U)(\cdot,\bar s)$ in $B_{\frac{8}{5} \bar R}$. Let $\varphi$ be a  smooth radial cutoff with $\varphi\equiv 1$ on $B_{\bar R}$, $\varphi \equiv 0$ on $B_{\frac 65 \bar R}^\complement$, and $|\nabla \varphi| \leq {\bf 1}_{\bar R <|y|<\frac 65 \bar R} \cdot 10 \bar R^{-1}$. Multiplying the elliptic equation for $P(\cdot,\bar s)$ with $\varphi^2 P(\cdot,\bar s)$ and integrating by parts, we obtain $\|\nabla (P(\cdot,\bar s) \varphi)\|_{L^2}^2 \leq 2 \| (U \cdot\nabla U)(\cdot,\bar s) \varphi \|_{L^2}^2 + 3 \| P(\cdot,\bar s) \nabla \varphi\|_{L^2}^2$. By Gagliardo-Nirenberg we have $\dot{H}^1 \subset L^6$ in $\mathbb{R}^3$ (with sharp constant $(2/\pi)^{2/3} 3^{-1/2} < 1/2$), and so we with~\eqref{eq:local:profile:bounds} and~\eqref{eq:local:pressure:annulus} we deduce 
\begin{align}
\|P(\cdot,\bar s)\|_{L^6(B_{\bar R})} 
&\leq 
\|P(\cdot,\bar s) \varphi \|_{L^6(\mathbb{R}^3)}
\leq 
\tfrac 12
\|\nabla (P(\cdot,\bar s) \varphi) \|_{L^2(\mathbb{R}^3)}
\notag\\
&
\leq   \| (U \cdot\nabla U)(\cdot,\bar s) \varphi \|_{L^2(\mathbb{R}^3)}  + \tfrac 32 \| P(\cdot,\bar s) \nabla \varphi\|_{L^2(\mathbb{R}^3)} 
\notag\\
&
\leq C_{U,0} C_{U,1} \| \tfrac{1}{1+|y|} \tfrac{1}{1+|y|^2} \|_{L^2(B_{\frac 65 \bar R})} 
+ 10 C_p  \bar R^{-3} \bigl | \{ \tfrac 45 \bar R <|y|<\tfrac 65 \bar R \} \bigr|^{\frac 12}
\leq C_p^\prime
\label{eq:P:L6:bound}
\end{align}
where $C_p^\prime = C_p^\prime(C_u ,C_p) >1$ is an explicitly computable constant.

Next, we note that since $(U,P)$ solve~\eqref{eq:local:profile} in the set $\{(y,s) \colon |y|<e^{s/2}\}$, the Bernoulli identity (see~\eqref{eq:L:Pi:alpha:s} with $\alpha=0$) holds pointwise in this set; restricting this identity to time $\bar s$, we arrive at
\begin{equation}
\label{eq:Bernoulli:local}
\bar L\Pi(y,\bar s)+|\Omega(y,\bar s)|^2=-(U(y,\bar s)+\tfrac{1}{2}y)\cdot (\partial_s U)(y,\bar s),
\end{equation}
for $|y| < e^{\bar s/2}$, where 
\[
\bar L := -\Delta + \bigl(U(y,\bar s)+\tfrac12 y\bigr)\cdot\nabla.
\]
We consider the homogeneous Dirichlet eigenfunction $w_{\bar R}$ for the adjoint operator $\bar L^*$ on $B_{\bar R}$, as constructed in Lemma~\ref{lem:lambda:conv}, with associated principal eigenvalue $\lambda_{\bar R}\geq 0$. This construction applies verbatim since $\bar R < e^{\bar s/2}$, and $U(\cdot,\bar s)$ satisfies \eqref{eq:local:profile:bounds} in $\bar B_{\bar R}$. Recalling that $\bar L^* = -\Delta - (U(y,\bar s)+\frac12 y )\cdot\nabla - \frac 32$ (see~\eqref{eq:L*:def}), upon multiplying~\eqref{eq:Bernoulli:local}  by $w_{\bar R}$, integrating by parts,  using that $w_{\bar R}$ solves~\eqref{eq:Dirichlet:EV:L*:a}, and that $w_{\bar R} =0$ on $\partial B_{\bar R}$, we obtain that 
\begin{align}
& \int_{B_{\bar R}} \bigl| \Omega(\cdot,\bar s)\bigr|^2\, w_{\bar R}\, dy 
\notag 
\\
&=
-
\lambda_{\bar R} \int_{B_{\bar R}} \Pi(\cdot,\bar s) \, w_{\bar R}\, dy 
-
\int_{\partial B_{\bar R}} \Pi(\cdot,\bar s) \, \partial_n w_{\bar R}\, dy 
-
\int_{B_{\bar R}} (U(\cdot,\bar s)+\tfrac{1}{2}y)\cdot (\partial_s U)(\cdot,\bar s)
\, w_{\bar R}\, dy,
\label{eq:goodie}
\end{align}
where $\partial_n w_{\bar R} = \frac{y}{|y|} \cdot \nabla w_{\bar R}$  is the outward normal derivative of $w_{\bar R}$.

For the first term on the right side of~\eqref{eq:goodie}, we appeal to Lemma~\ref{lem:Gaussian:upper} (which is applicable if $\bar R$ is sufficiently large with respect to $C_u $) with $\eps =\frac 12$ to obtain the bound $w_{\bar R}(y) \leq M e^{-|y|^2/8}$ for some $M = M(C_u )>0$. Since $|\Pi(\cdot,\bar s)|\leq |P(\cdot,\bar s)| + \frac 12 C_u ^2 + \frac 12 C_u $, by using~\eqref{eq:P:L6:bound} we obtain that 
\begin{align}
\lambda_{\bar R} \int_{B_{\bar R}} \bigl| \Pi(\cdot,\bar s) \bigr| \, w_{\bar R}\, dy  
&\leq \tfrac 12 \lambda_{\bar R} (C_u ^2 + C_u ) M \| e^{-\frac 18 |y|^2}\|_{L^1(B_{\bar R})}
+ \lambda_{\bar R} M \|P(\cdot,\bar s)\|_{L^{6}(B_{\bar R})} \| e^{-\frac 18 |y|^2}\|_{L^{\frac 65}(B_{\bar R})}
\notag\\
&\leq  \lambda_{\bar R} \cdot 100 M ( C_u ^2 +  C_u  + C_p^\prime).
\label{eq:goodie:1}
\end{align}
For the second term on the right side of~\eqref{eq:goodie}, we notice that the outward normal derivative $\partial_n w_{\bar R}$  has a fixed sign on $\partial B_{\bar R}$; indeed, since $w_{\bar R} >0$ in $B_{\bar R}$ (see item (i) of Lemma~\ref{lem:lambda:conv}) and $w_{\bar R} = 0$ on $\partial B_{\bar R}$, we have that $\partial_n w_{\bar R} \leq 0$ on $\partial B_{\bar R}$. Moreover, integrating~\eqref{eq:Dirichlet:EV:L*:a} over $B_{\bar R}$ we obtain $- \int_{\partial B_{\bar R}} \partial_n w_{\bar R} = \lambda_{\bar R} \int_{B_{\bar R}} w_{\bar R}$, and thus $\| \partial_n w_{\bar R}\|_{L^1(\partial B_{\bar R})} = \lambda_{\bar R} \| w_{\bar R}\|_{L^1(B_{\bar R})}$. Combining this identity with the bound for $\Pi$ in~\eqref{eq:local:Pi:bounds}, and the Gaussian upper bound $w_{\bar R}(y) \leq M e^{-|y|^2/8}$, we deduce 
\begin{align}
\int_{\partial B_{\bar R}} \bigl| \Pi(\cdot,\bar s) \bigr| \, |\partial_n w_{\bar R}| \, dy  
&\leq \lambda_{\bar R} \cdot  300 C_u  M 
\label{eq:goodie:2}
\end{align}
Lastly, for the third term on the right side of~\eqref{eq:goodie}, we appeal to the $U$ bound in~\eqref{eq:local:profile:bounds}, the Gaussian upper bound for $w_{\bar R}$, and crucially, to the $\partial_s U$ bound in~\eqref{eq:Ds:bound}, and deduce 
\begin{align}
\int_{B_{\bar R}}  \bigl( |U(\cdot,\bar s)| +\tfrac{1}{2}|y| \bigr) \, \bigl| (\partial_s U)(\cdot,\bar s)\bigr| w_{\bar R} \, dy 
&\leq (1+C_u)  M \int_{B_{\bar R}} \bigl| (\partial_s U)(y,\bar s)\bigr| (1+|y|) e^{-\frac 18 |y|^2} \, dy
\notag\\
&\leq \delta_0  \cdot  600 (1+C_u)  M .
\label{eq:goodie:3}
\end{align}
By combining~\eqref{eq:goodie}--\eqref{eq:goodie:3} with the upper bound for $\lambda_{\bar R}$ established in item~(iv) of Lemma~\ref{lem:lambda:conv}, we deduce that there exist constants $K_1 = K_1(C_u ,C_p)\geq 1$ and $K_2 = K_2(C_u )\geq 1$ such that 
\begin{equation}
\int_{B_{\bar R}} \bigl| \Omega(\cdot,\bar s)\bigr|^2\, w_{\bar R}\, dy 
\label{eq:goodie:4}
\leq K_1  \bar R^{-2} + K_2 \delta_0 ,
\end{equation}
whenever $\bar R= \frac 58 e^{\bar s/2}$ is sufficiently large, in terms of $C_u $ and $C_p$.

In order to conclude the proof, we fix the constants $\theta_* \in (0,1]$, $R_{**} \geq 2$, and $s_{**}\geq 1$ from Proposition~\ref{prop:one:time}. \emph{We emphasize that  these constants only depend on $C_u $}. 
By Harnack's inequality for the uniformly elliptic operator $\bar L^* - \lambda_{\bar R}$ (see e.g.~\cite[Corollary 9.25]{GilbargTrudinger98}), there exists a constant $K_H = K_H(R_{**},C_u )>0$, such that 
\[
1 \leq \sup_{B_{2 R_{**}}} w_{\bar R} \leq K_H \inf_{B_{2 R_{**}}} w_{\bar R}
\]
for \emph{all} $\bar R \geq 4 R_{**}$. Using the above Harnack bound and \eqref{eq:goodie:4}, we obtain that 
\[
K_H^{-1} \|\Omega(\cdot,\bar s)\|_{L^2(B_{2R_{**}})}^2
\leq 
 K_1  \bar R^{-2} + K_2 \delta_0 
\]
for $\bar R= \frac 58 e^{\bar s/2}$  sufficiently large in terms of $C_u $ and $C_p$.
Upon letting $\delta_0  = K_H^{-1} K_2^{-1} \cdot \frac 18 \theta_*^2$ (a constant that only depends on $C_u $), and taking $\bar R$ to be large enough to also ensure that $K_H K_1 \bar R^{-2} \leq \frac 18 \theta_*^2$ (put differently, since $\bar s \geq s_0$, we   take $s_0$ large enough in terms of $C_u $ and $C_p$), we obtain that 
\[
\|\Omega(\cdot,\bar s)\|_{L^2(B_{2R_{**}})} \leq \tfrac 12 \theta_*. 
\]
By Proposition~\ref{prop:one:time} we conclude that $(0,0)$ is a regular point.

\section*{Acknowledgements}
The work of B.P. was partially supported by a fellowship in the Simons Society of Fellows.
The work of V.V. was partially supported by the Collaborative NSF grant DMS-2307681 and a Simons Investigator Award.  The authors thank Julien Guillod and Vladimir \v{S}ver\'ak for stimulating discussions.

\end{document}